\newtheorem{thm}{Theorem}[section]
\newtheorem{cor}[thm]{Corollary}
\newtheorem{corollary}[thm]{Corollary}
\newtheorem{lem}[thm]{Lemma}
\newtheorem{lemma}[thm]{Lemma}
\newtheorem{prop}[thm]{Proposition}
\newtheorem{proposition}[thm]{Proposition}
\newtheorem{claim}[thm]{Claim}
\newtheorem{theorem}[thm]{Theorem}
\theoremstyle{definition}
\newtheorem{defn}[thm]{Definition}
\newtheorem{definition}[thm]{Definition}
\newtheorem{ex}[thm]{Example}
\newtheorem{example}[thm]{Example}
\newtheorem{remark}[thm]{Remark}
\numberwithin{equation}{section}
\newcommand{\x}{{\tt x}}
\newcommand{\y}{{\tt y}}
\newcommand{\z}{{\tt z}}
\newcommand{\A}{{\tt A}}
\newcommand{\B}{{\tt B}}
\newcommand{\C}{{\tt C}}
\newcommand{\D}{{\tt D}}
\newcommand{\R}{\mathbb{R}}
\newcommand{\N}{\mathbb{N}}
\newcommand{\Q}{\mathbb{Q}}
\newcommand{\Z}{\mathbb{Z}}
\newcommand{\M}{\mathrm{M}}
\newcommand{\W}{\mathrm{W}}
\newcommand{\tr}{{\rm{tr}}}
\newcommand{\sgn}{{\tt{sgn}}}
\newcommand{\MCG}{{\mathcal{MCG}}}
\newcommand{\s}{\sigma}
\newcommand{\T}{\mathcal{T}}
\newcommand{\Type}{{\tt{Type}}}
\newcommand{\msp}{\rightharpoonup} 
\begin{document}

\title{Agol cycles of pseudo-Anosov 3-braids}

\author{Elaina Aceves} 
\address {Department of Mathematics, University of Utah, Salt Lake City, UT 84115}
\email{u6044706@umail.utah.edu}

\author{Keiko Kawamuro}
\address {Department of Mathematics, University of Iowa, Iowa City, IA 52242}
\email{keiko-kawamuro@uiowa.edu}

\date{\today}

\maketitle

\begin{abstract}
An Agol cycle is a complete invariant of the conjugacy class of a pseudo-Anosov mapping class. 
We study necessary and sufficient conditions for equivalent Agol cycles of pseudo-Anosov 3-braids. 
\end{abstract} 

\tableofcontents

\section{Introduction}

Nielsen-Thurston's classification states that every surface homeomorphism is isotopic to either a periodic, reducible or pseudo-Anosov map \cite{Thurston}. 
If a map $\phi$ is pseudo-Anosov, there are two transverse, singular measured foliations on the surface and a dilatation $\lambda >1$ such that $\phi$ stretches along one foliation by $\lambda$ and the other by $1/\lambda$. 

When a surface is an $n$-punctured disk $D_n$, the mapping class group $\MCG(D_n)$ is isomorphic to the 
$n$ stranded braid group $B_n$ \cite{BIRMAN}. 
Suppose that a braid $\beta \in B_n \simeq \MCG(D_n)$ is a pseudo-Anosov mapping class. 


By definition, the dilatation is an invariant of the conjugacy class of a pseudo-Anosov map. 
The dilatation can be the first tool for the conjugacy problem since it is often easy to compute. 
However, the downside of the dilatation is that it is not always an effective conjugacy class invariant as demonstrated in Theorem~\ref{thm:dilatation}: \\

\noindent
{\bf Theorem~\ref{thm:dilatation}. } 
{\em 
There are infinitely many integers 
$x, y$ and $z$, such that 
the braids $\beta = \sigma_1^x \sigma_2^{-1} \sigma_1^{y} \sigma_2^z$ and 
$\beta'=\sigma_1^x \sigma_2^z \sigma_1^y \sigma_2^{-1}$ 
belong to distinct conjugacy classes but 
have the same dilatation  
\[\lambda = \frac{1}{2}(\gamma + \sqrt{\gamma^2-4})\]
where 
\begin{equation}\label{eq:trace}
\gamma=\gamma(x,y,z)=\sgn(xyz)(-2 -x-y+xz+yz +xyz).
\end{equation}
}

Fortunately, there exists a stronger invariant.
The Agol cycle \cite{Agol} (see Definition~\ref{def of Agol cycle}) is another conjugacy class invariant of a pseudo-Anosov mapping class on a surface $S$, and more importantly, it is a complete conjugacy invariant up to the center of $\MCG(S)$.  

An Agol cycle is a sequence of measured train tracks generated by maximal splitting.

A measured train track is a combinatorial object encoding the transverse measured foliation of a pseudo-Anosov map. 
(It is also called an invariant train track since the train track is invariant under action of the pseudo-Anosov map, and often we call it a train track without the adjective measured or invariant for simplicity.)  
It is a graph and each edge is labeled by a positive number called its weight (or measure). 
As depicted in Figure~\ref{SwitchFig}, at each vertex three edges meet tangentially and the weights satisfy the {\em switch condition} $a=b+c$. 
For a detailed definition of a measured train track,
see Section~2 of Agol's paper \cite{Agol} or Chapter 15 of Farb and Margalit's book \cite{Primer}. 
\begin{figure}[h]
\includegraphics[height=2.0cm]{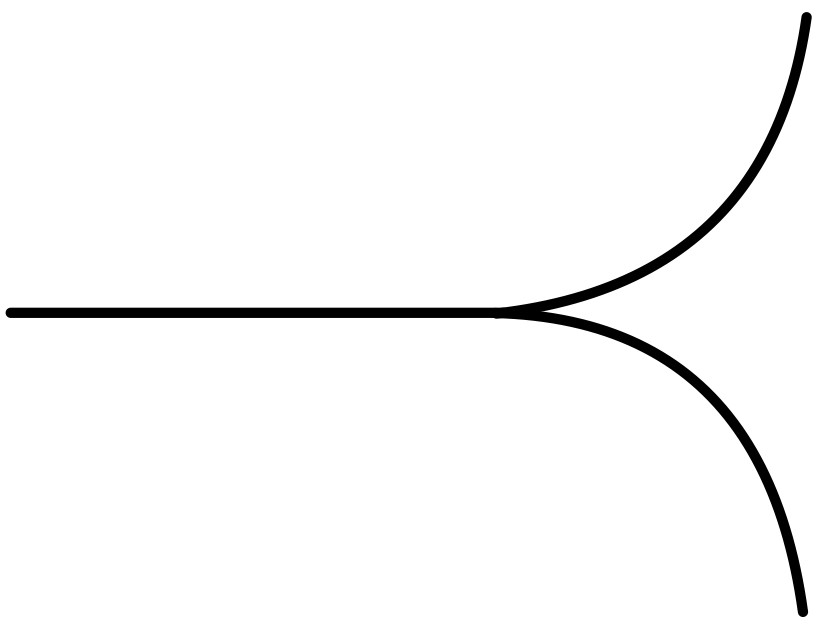}
\put(0, 45){\fontsize{11}{15}$c$}
\put(0, 5){\fontsize{11}{15}$b$}
\put(-60, 32){\fontsize{11}{15}$a$}
\caption{A vertex of a train track where the weights satisfy $a=b+c$.}
\label{SwitchFig}
\end{figure}

Maximal splitting is defined as follows:
 
\begin{definition}\label{def:max-split}
{\em Maximal splitting} ($\rightharpoonup$) on a train track is an operation along all edges with the largest weight simultaneously as depicted in Figure \ref{fig6}. 
Note that maximal splitting preserves the numbers of edges and vertices of the train track. 
\end{definition}
\begin{figure}[h]
\includegraphics[height=2.0cm]{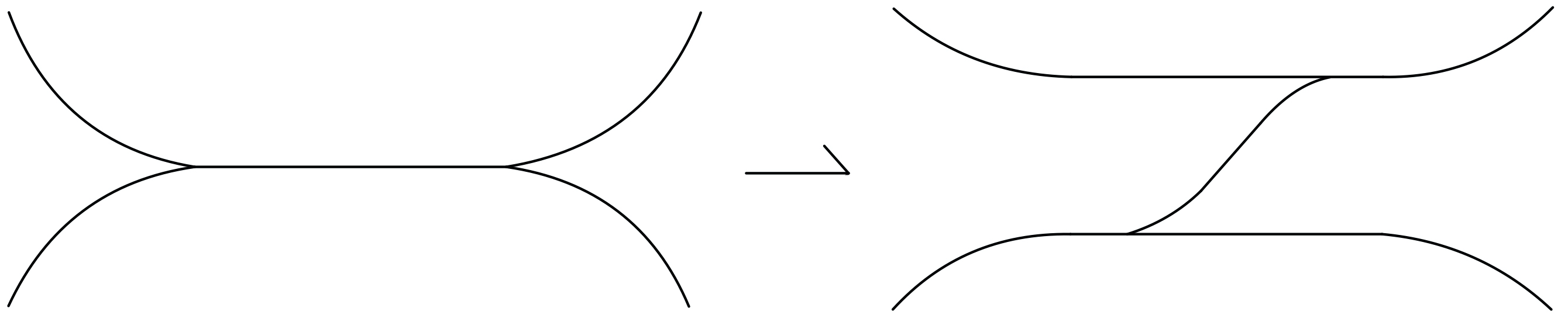}
\put(0, 60){\fontsize{11}{15}$c$}
\put(0, -5){\fontsize{11}{15}$d$}
\put(-130, -5){\fontsize{11}{15}$b$}
\put(-130, 60){\fontsize{11}{15}$a$}
\put(-55, 25){\fontsize{11}{15}$c-a=b-d$}
\put(-290, 60){\fontsize{11}{15}$a$}
\put(-155, 60){\fontsize{11}{15}$c$}
\put(-290, -5){\fontsize{11}{15}$b$}
\put(-155, -5){\fontsize{11}{15}$d$}
\put(-250, 35){\fontsize{11}{15}$a+b=c+d$}
\caption{Maximal splitting when $a<c$. The horizontal edge has the largest weight $a+b=c+d$. After the splitting, the largest weight will be either $b$ or $c$. }
\label{fig6}
\end{figure}

Given a pseudo-Anosov $\beta \in \MCG(S)$ with an invariant train track $\tau_0$, we can create an infinite maximal splitting sequence of train tracks $\tau_0 \rightharpoonup \tau_1\rightharpoonup \tau_2\rightharpoonup \cdots$. 

\begin{definition}\cite{Agol}\label{def of Agol cycle}
An {\em Agol cycle} $$\tau_p\rightharpoonup \cdots \rightharpoonup \tau_{q-1} \rightharpoonup \tau_q= \lambda^{-1}\beta(\tau_p)$$ for $\beta$ is formed when applying $\beta^{-1}$ to some $\tau_{q}$ returns to $\tau_{p}$ where $0\leq p<q$.

The head $\tau_0 \rightharpoonup \tau_1\rightharpoonup \cdots \rightharpoonup \tau_{p-1}$ of the sequence is called the {\em pre-periodic part}. The {\em length of the Agol cycle} is $q-p$ and the {\em length of the pre-periodic part} is $p$. 
\end{definition}

\begin{definition}
We say that two Agol cycles are {\em equivalent} if they are identical up to homeomorphism and scaling. 
Similarly, two Agol cycles are {\em mirror equivalent} if they are equivalent after taking mirror image. 
\end{definition}

\begin{thm}\label{thm:Agol}
\cite[Theorem 3.5 and Section 7]{Agol} 
Every pseudo-Anosov mapping class has an Agol cycle. 
Furthermore, two pseudo-Anosov maps are in the same conjugacy class up to center of $\MCG(S)$ if and only if they have equivalent Agol cycles. 
\end{thm}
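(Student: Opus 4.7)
The plan is to split the statement into two assertions: existence of an Agol cycle for every pseudo-Anosov mapping class, and the characterization of conjugacy classes modulo the center via equivalent Agol cycles.

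For existence, I would start from an invariant measured train track $\tau_0$ carrying the unstable foliation of $\beta$, so that $\beta(\tau_0)$ is supported on the same underlying graph with weights scaled by $\lambda$. Forming the maximal splitting sequence $\tau_0 \rightharpoonup \tau_1 \rightharpoonup \tau_2 \rightharpoonup \cdots$, Definition~\ref{def:max-split} ensures that the number of edges and vertices is preserved, so every $\tau_i$ has one of finitely many combinatorial types. After normalizing total weight to $1$, each normalized $\tau_i$ lies in a compact polytope cut out by the switch conditions. I would then check that $\lambda^{-1}\beta$ intertwines maximal splitting with itself and preserves projective weights, and apply a recurrence/pigeonhole argument on this compact space of normalized train tracks to extract indices $p < q$ with $\tau_q = \lambda^{-1}\beta(\tau_p)$, producing the required cycle. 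A subtlety to address is that maximal splitting is deterministic only once an edge is marked as widest, so one must argue that in the periodic regime the choices made by $\beta$ and by $\lambda^{-1}\beta$ agree; this follows because $\beta$ scales all weights uniformly, hence does not alter which edge is widest.

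For the conjugacy characterization, the forward direction is the easier half. If $\beta' = z\, h \beta h^{-1}$ for some mapping class $h$ and some $z$ in the center of $\MCG(S)$, then $h$ sends an invariant train track of $\beta$ to one of $\beta'$, and because maximal splitting depends only on the combinatorics of the graph together with the weights, $h$ carries the whole Agol cycle of $\beta$ to one of $\beta'$, yielding an equivalence. For the reverse direction, I would attempt to promote a given equivalence of Agol cycles to a conjugating mapping class. The idea is that the periodic part of the splitting sequence recovers the unstable measured foliation together with its $\lambda^{-1}\beta$-action, since the foliation can be reconstructed from a cofinal sequence of carried train tracks under splitting. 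A homeomorphism matching the two Agol cycles therefore extends to a homeomorphism of neighborhoods of the train tracks intertwining the foliations, which one then extends to all of $S$ using the fact that a pseudo-Anosov map is determined by its stable and unstable foliations with their transverse measures.

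The main obstacle will be the reverse direction, specifically explaining precisely why the ambiguity in the reconstruction is exactly the center of $\MCG(S)$ and nothing larger. Two issues arise: first, combinatorial equivalence of cycles must be upgraded to an honest orientation-preserving homeomorphism of the punctured surface respecting punctures; second, even after producing such a conjugacy on the foliations, one must control by how much it can fail to conjugate $\beta$ to $\beta'$ on the nose. I would handle the first by observing that an equivalence of weighted train tracks extends to a homeomorphism of regular neighborhoods, and then across complementary regions, whose topology is pinned down by the singularity data of the foliation. For the second, I would invoke the fact that the centralizer of a pseudo-Anosov in $\MCG(S)$ is virtually cyclic with central kernel, so the indeterminacy in the conjugating element beyond the center contributes only powers of a root of $\beta$, and these powers are already absorbed by the shift indexing of the Agol cycle.
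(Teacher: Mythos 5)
The paper does not prove this theorem; it is quoted directly from Agol's paper (Theorem~3.5 and Section~7 there) and used as a black box, and the later invocation in the proof of Theorem~\ref{thm:TFAE} again just cites Agol and Margalit's slides. So there is no internal proof to compare your sketch against, and I will evaluate it on its own terms.

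There is a genuine gap in your existence argument. You normalize the weights, observe that the normalized train tracks lie in a compact space (finitely many combinatorial types, each with a compact weight polytope), and then propose a ``recurrence/pigeonhole argument'' to produce $p<q$ with $\tau_q=\lambda^{-1}\beta(\tau_p)$. But that compact space is a continuum, not a finite set, so pigeonhole does not apply, and recurrence only gives $\tau_q$ arbitrarily \emph{close} to $\lambda^{-1}\beta(\tau_p)$, never equal. Exact eventual periodicity is precisely the content of Agol's theorem and requires an additional input that your sketch omits. One route is the coalescence property of maximal splitting: any two measured train tracks fully carrying the same projective measured lamination eventually have identical maximal splitting tails, so the sequence starting from $\lambda^{-1}\beta(\tau_0)$ merges with the one starting from $\tau_0$ and hence the latter is eventually periodic under $\lambda^{-1}\beta$. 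Another is arithmetic: the weights of every $\tau_i$ lie in a fixed finitely generated $\Z$-submodule of the dilatation's number field, so the projectivized weight vectors form a discrete subset of each simplex, and convergence of the sequence to the invariant projective class then forces eventual exact equality. Without one of these, your argument does not close, because closeness in a continuum never becomes equality for free.

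On the conjugacy characterization: the forward direction is fine. In the reverse direction, your overall plan (reconstruct the foliation with its $\lambda^{-1}\beta$-action from the periodic sequence, extend a matching of weighted train tracks to a surface homeomorphism, identify the residual indeterminacy with the center) is the right shape. But the appeal to virtually cyclic centralizers imports more than needed and is not how the cited argument is organized; the more direct point is that a homeomorphism intertwining the two periodic sequences already conjugates the $\lambda^{-1}\beta$- and $(\lambda')^{-1}\beta'$-actions, and any remaining ambiguity is a mapping class fixing a filling train track with its measure, hence acting trivially and therefore central. You should also say explicitly how the shift ambiguity in the choice of $p$ is absorbed, since the cycle determines $\beta$ only up to that shift composed with a central element.
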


A known example of an Agol cycle for a pseudo-Anosov 4-braid can be found in Agol's paper \cite{Agol}. 
Margalit's talk slides \cite{slide} contain an Agol cycle for the pseudo-Anosov 3-braid $\sigma_1 \sigma_2^{-1}$.
In general, it is not easy to compute Agol cycles by hand.
Therefore, necessary or sufficient conditions for equivalent Agol cycles will be helpful to solve the conjugacy problem. 
 
In Theorem~\ref{thm:TFAE}, we give three necessary conditions for equivalent Agol cycles. 
\\

\noindent
{\bf Theorem~\ref{thm:TFAE}} {\em 
Let $\beta$ and $\beta'$ be pseudo-Anosov 3-braids.
The following conditions satisfy: 
$(1) \Leftrightarrow (2) \Rightarrow (3) \Rightarrow (4) \Rightarrow (5).$  
\begin{enumerate}
\item
$\beta$ and $\beta'$ are conjugate in $B_3$ up to a center element. 
\item
$\beta$ and $\beta'$ have the same Agol cycle up to homeomorphism and scaling.
\item
{\em (Topological condition)}
There exist $l$ and $m \in \N$ such that $\sgn(\T_l)=\sgn(\T'_{m})$ and the triple-weight train track sequences 
$
\T_l \msp \T_{l+1} \msp \T_{l+2} \msp \cdots$ (for $\beta$)
and
$
\T'_{m} \msp \T'_{m+1} \msp \T'_{m+2} \msp \cdots $ (for $\beta'$)
give the same periodic I-II-I'-II'-sequence (cf. Definition~\ref{def:I-II-I'-II'-sequence}).
\item
{\em (Number theoretic condition)}
There exist $l$ and $m \in \N$ such that 
$\sgn(\T_l)=\sgn(\T'_{m})$ and the nested Farey interval sequences 
$J_l \supset J_{l+1} \supset J_{l+2} \supset \cdots$ for $\beta$ and 
$J'_{m} \supset J'_{m+1} \supset J'_{m+2} \supset \cdots$ for $\beta'$ 
give the same periodic ${\bf LR}$-sequences (cf. Definition~\ref{def:LandR}). 
\item
{\em (Numerical condition)}
There exist $l$ and $m \in \N$ such that the 4-ratios (cf. Definition~\ref{def of 4-tuple and ratio})
of $\T_l$ and $\T'_m$ are the same. \\
\end{enumerate} 
}

The equivalence $(1) \Leftrightarrow (2)$ is due to Agol.  See \cite[Section 7]{Agol} and also Margalit's talk slides \cite{slide}. 
Our new conditions (3), (4) and (5) have different characteristics as follows:

An Agol cycle contains a large amount of information and each train track in the cycle can be very complicated with many twists. After untwisting (which is a homeomorphism operation), we will show that there are only four types of train tracks that appear in a cycle (Type I, II, I', II' defined in Figure~\ref{figure-121'2'}). 
In Condition (3), we focus on the topological types (Type I, II, I', II') and forget the numerical data of the weights of the train track edges.  

In Condition (4), we estimate a particular algebraic number $\alpha$ (the MP-ratio defined in Proposition~\ref{prop:unique-alpha}) associated to the pseudo-Anosov braid in the Farey tessellation. 
Since the number $\alpha$ is irrational, there exists an infinite sequence of nested intervals ($J_l \supset J_{l+1} \supset J_{l+2} \supset \cdots$) in the Farey tessellation that converges to $\alpha$. 
For each pair of adjacent intervals $J_t \supset J_{t+1}$, we may forget their exact location in the Farey tessellation, and instead focus on the relative location of the sub-interval $J_{t+1}$ with respect to $J_t$. In the Farey tessellation, $J_t$ splits into two sub-intervals. We record whether $J_{t+1}$ is in the left-subinterval ({\bf L}) or the right-subinterval ({\bf R}) of $J_t$. This is how we generate an ${\bf LR}$-sequence.

In Condition (5), we focus on just one train track in the infinite sequence. 
In contrast to (3), we forget the topological side and focus on the numerical data of the edge weights of train tracks.  
Here is a useful consequence of (5): If the algebraic numbers (MP-ratios) $\alpha$ and $\alpha'$ do not satisfy $\A+\B \alpha+\C \alpha '+\D \alpha \alpha'=0$ for any $\A, \B, \C, \D \in \Z$, then the 3-braids $\beta$ and $\beta'$ are not conjugate.

A concrete example is presented in Example~\ref{examples beta and beta'}.

In Theorem~\ref{BigThm}, we explore the converse direction of (2) $\Rightarrow$ (5) in Theorem~\ref{thm:TFAE}. 
\\

\noindent
{\bf Weak version of Theorem~\ref{BigThm}} {\em 
Let $\beta$ and $\beta'$ be pseudo-Anosov 3-braids. 
Suppose that there exist $l$ and $m$ such that triple-weight train tracks $\mathcal{T}_l$ and $\mathcal{T}_m'$ have the same 4-ratio. Then the Agol cycles of $\beta$ and $\beta'$ are equivalent or mirror equivalent.\\
}

In particular, we see that condition (5) of Theorem~\ref{thm:TFAE} cannot be a sufficient condition for equivalent Agol cycles. 
Examples for Theorem~\ref{BigThm} are given in Example~\ref{ex:beta''}.

\subsection*{Acknowledgements}
The authors would like to thank Dan Margalit for useful conversation and talk slides, Ian Agol for interesting conversation, and Manuel Albrizzio for the instructive initial draft of the Matlab code to predict Agol cycles.  
EA was partially supported by the Ford Foundation and the Alfred P. Sloan Foundation and 
KK was partially supported by NSF DMS2005450.

\section{Stability of maximal splitting sequence}

In this section our goal is to study the stability property of a maximal splitting sequence. 
In Theorem~\ref{ThmTTT}, we prove that a maximal splitting sequence arrives at a train track with only three distinct weights and any subsequent train track also has triple-weight type. 

\begin{definition}
We define two types of measured train tracks called Types M and W as in Figure \ref{fig4}. 
Each type has exactly six edges and four vertices.
With the weights $a, b >0$, the left measured train track is denoted by M$(a,b)$ and the right by W$(a, b)$. 
The weights are considered up to scaling. 
\end{definition}

\begin{figure}[h]
\includegraphics[height=2.0cm]{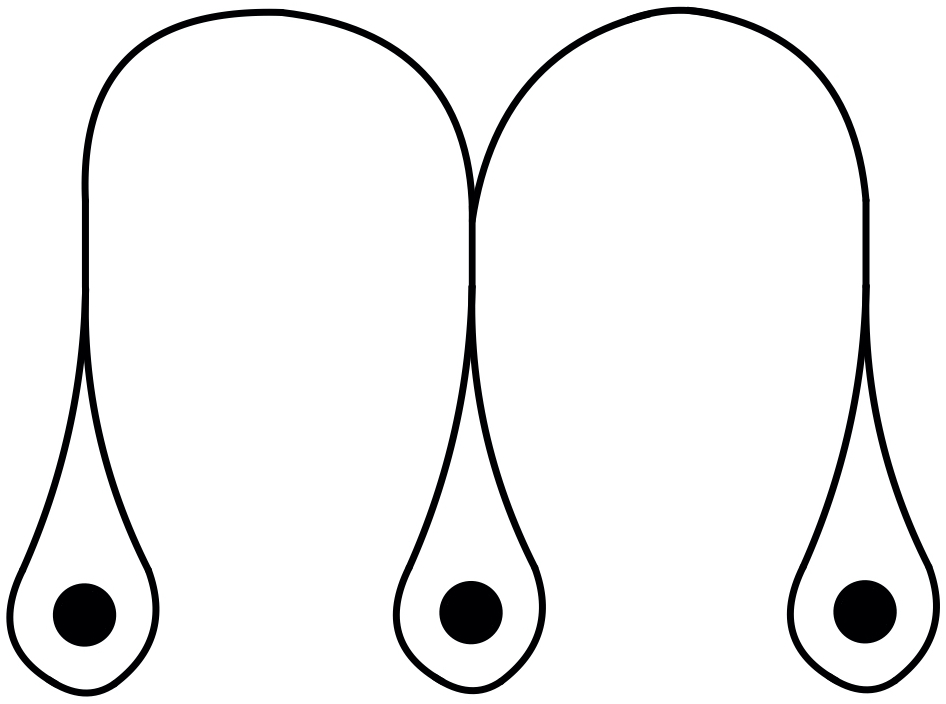} \quad 
\includegraphics[height=2.0cm]{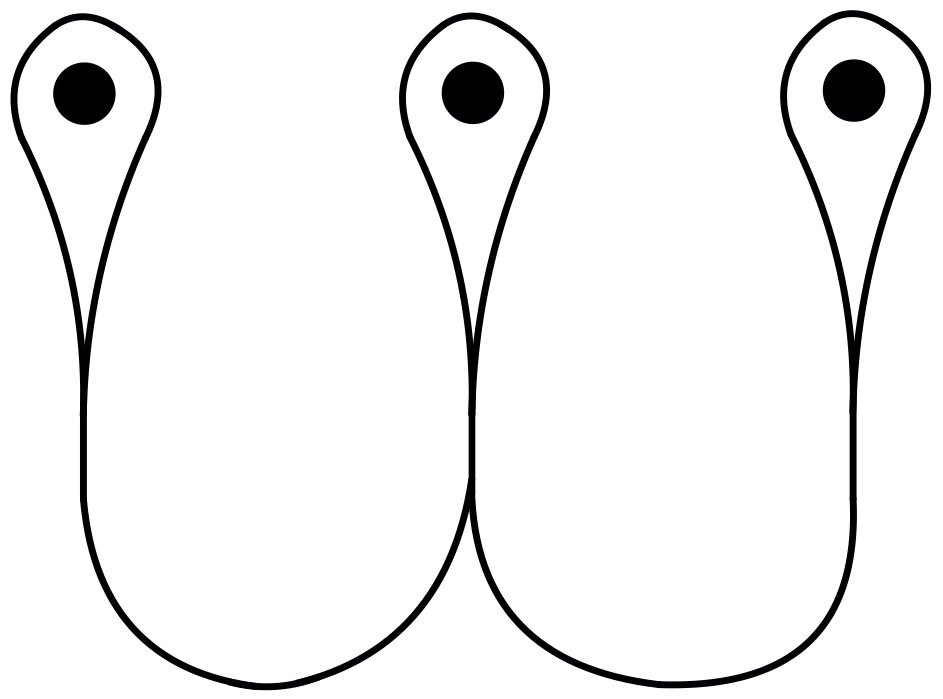}
\put(-57,5){$a$}
\put(-25,5){$b$}
\put(-150,45){$a$}
\put(-115,45){$b$}
\caption{Train tracks M$(a, b)$ and W$(a, b)$}
\label{fig4}
\end{figure}

Proposition~\ref{prop:unique-alpha} follows from the fact that the projective measured foliation space for a 4-punctured sphere is homeomorphic to $\R P^1= S^1$, see Figure 15.6 in \cite{Primer}.

\begin{proposition}\label{prop:unique-alpha}
For every pseudo-Anosov braid $\beta$, there exists a unique irrational number $\alpha\in (0,1)$ such that exactly one of 
$$
\M_\alpha:=\M(1, \alpha), \ \M'_\alpha:=\M(\alpha, 1), \ \W_\alpha:=\W(\alpha, 1), \ \W'_\alpha:=\W(1, \alpha)
$$ 
yields a train track of $\beta$. 
Since $\alpha$ describes the width-height ratio of the Markov partition rectangle, we call $\alpha$ the {\em MP-ratio} for $\beta$.
\end{proposition}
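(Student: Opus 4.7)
The plan is to invoke the cited structure theorem $\mathrm{PMF}(S_{0,4}) \cong S^1 = \R P^1$ and then match the four named train-track families to an atlas of open arcs covering this circle. First, a pseudo-Anosov 3-braid $\beta \in \MCG(D_3)$ descends to a pseudo-Anosov homeomorphism of the four-punctured sphere $S_{0,4}$; its stable measured foliation $\mathcal{F}^s$ is unique up to a positive scalar and therefore determines a single projective class $[\mathcal{F}^s] \in \mathrm{PMF}(S_{0,4})$. The task then reduces to identifying this single point of the circle with a unique train track in one of the four named families and reading off the parameter $\alpha$.

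Second, I would verify that the four families $\{\M_\alpha\}$, $\{\M'_\alpha\}$, $\{\W_\alpha\}$, $\{\W'_\alpha\}$ give a complete parameterization of $\mathrm{PMF}(S_{0,4})$ by four pairwise disjoint open arcs. For each of the topological shapes $\M$ and $\W$ in Figure~\ref{fig4}, imposing the switch condition $a = b + c$ at every vertex of the six-edge graph expresses every edge weight as a nonnegative linear combination of the two free outer weights $(a,b)$; projectivizing this two-dimensional cone produces an open arc of $S^1$. Distinguishing the two possible orderings of the free weights then produces the primed versus unprimed versions, giving four arcs in total. A short Farey-tessellation bookkeeping check shows that the closures of these four arcs meet precisely at the rational points of $\mathrm{PMF}(S_{0,4})$, which are exactly the isotopy classes of essential simple closed curves in $S_{0,4}$. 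Since $\beta$ is pseudo-Anosov, $[\mathcal{F}^s]$ cannot be represented by any multicurve, so it cannot be a rational point of $\mathrm{PMF}$. Hence $[\mathcal{F}^s]$ lies in the interior of exactly one of the four open arcs, which gives a unique $\alpha \in (0,1)$ after normalizing by the ratio of the smaller free weight to the larger; the same rational-exclusion argument applied within the arc yields the irrationality of $\alpha$.

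The main obstacle is the middle step: correctly identifying which open arc of $\mathrm{PMF}(S_{0,4})$ corresponds to which named train-track family, and checking that the four arcs fit together into the full circle with only rational-point overlap. This is a small finite Farey-tessellation computation, but care is needed to align the primed/unprimed conventions of Figure~\ref{fig4} with the chosen parameterization $\alpha \in (0,1)$, and to confirm that no projective foliation is simultaneously carried by two distinct shapes except at rational boundary points. The later ``MP-ratio'' interpretation of $\alpha$ as the width-to-height ratio of a Markov partition rectangle is a geometric reinterpretation of the parameter and plays no role in the uniqueness argument itself.
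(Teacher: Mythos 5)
Your proof follows the same essential route the paper takes: both invoke the fact that $\mathrm{PMF}(S_{0,4})\cong S^1$ (the paper simply cites Figure 15.6 of \cite{Primer} and asserts the proposition ``follows from'' this), and both then treat the four families $\M_\alpha, \M'_\alpha, \W_\alpha, \W'_\alpha$ as open arcs covering the circle away from finitely many rational (i.e.\ multicurve) boundary points, none of which can carry a pseudo-Anosov invariant foliation. The one place where your argument genuinely diverges is the irrationality of $\alpha$: you argue that a rational $\alpha$ would place $[\mathcal F^s]$ at a rational point of $\mathrm{PMF}$, hence at a multicurve class, which a pseudo-Anosov cannot project to --- whereas the paper reads $\alpha$ off the Perron eigenvector of an integer transition matrix and deduces its irrationality from the irrationality of the dilatation $\lambda$. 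Both routes are sound; yours stays entirely within $\mathrm{PMF}$ geometry and makes the uniqueness count transparent, while the paper's computational route is the one it actually needs later (it is the same transition-matrix machinery developed in Lemma~\ref{Matrices} and used throughout Section~7). The only thing I would tighten in your write-up is the sentence claiming the arc closures ``meet precisely at the rational points of $\mathrm{PMF}(S_{0,4})$'': what you need (and what you in fact use) is that the finitely many \emph{endpoints} of the four arcs are rational points, not that they exhaust all rationals.
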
 

In the proof of Lemma~\ref{Matrices}, we show how to compute a transition matrix and determine the type (Type $\M$ or $\W$) for certain pseudo-Anosov 3-braids.
It is straightforward to generalize the construction to general pseudo-Anosov 3-braids. 
With a transition matrix in hand, the dilatation $\lambda$ is its largest eigenvalue.
The eigenvector for $\lambda$ is of the form $(\alpha, 1)$ or $(1, \alpha)$, where $\alpha$ is the MP-ratio.
Since the dilatation $\lambda$ is irrational, the MP-ratio is also an irrational number.

Note that $\M_\alpha$ and $\W_\alpha$ are related to each other by a 180$^\circ$-rotation, and so are $\M'_\alpha$ and $\W'_\alpha$. 
We further noter that $\M_\alpha$ and $\M'_\alpha$ are related by a mirror reflection across a vertical line, and so are $\W_\alpha$ and $\W'_\alpha$.

Based on Proposition~\ref{prop:unique-alpha}, we present the Triple-Weight Train Track Theorem:

\begin{theorem}\label{ThmTTT}
Let $\alpha\in (0,1)$ be an irrational number and $n=\lfloor \frac{1}{\alpha} \rfloor$. Thus, $\frac{1}{n+1} < \alpha <\frac{1}{n}$. 
Consider the maximal splitting sequence starting from $\W(\alpha, 1)$:
$$\W_\alpha=\tau_0 \rightharpoonup \tau_1 \rightharpoonup \tau_2 \rightharpoonup \cdots$$ 
The train track $\tau_{n+4}$ in the sequence is shown in Figure~\ref{figure-T_n}. 
We call it a {\em triple-weight train track} as it has only three different weights.
This is the first triple-weight train track in the sequence and 
all the train tracks after $\tau_{n+4}$ have triple-weight type.

\begin{figure}[ht]\label{figure-T_n}
\hspace{-2cm}
\raisebox{60pt}{$ \tau_{n+4}= $ }
\includegraphics[height=4.0cm]{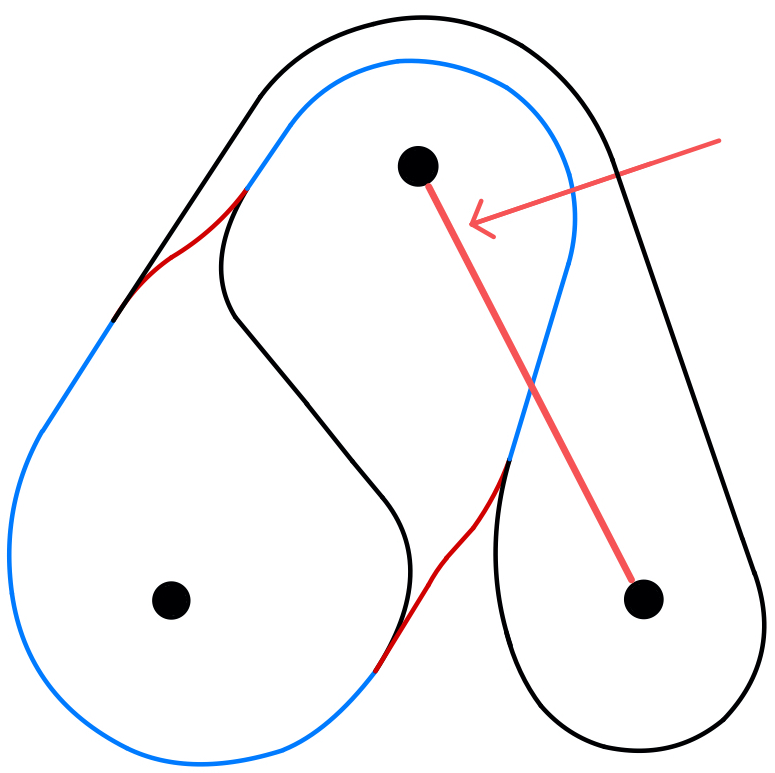}  
\put(-5, 100){\fontsize{11}{11}{\color{WildStrawberry}$n$ negative}}
\put(-5, 90){\fontsize{11}{11}{\color{WildStrawberry}half twists}}
\put(-40, -25){\fontsize{13}{11}{\color{red}$\frac{-1+(n+1)\alpha}{2}$}}
\put(40, -25){\fontsize{13}{11}$\frac{1-n\alpha}{2}$}
\put(80, -25){\fontsize{13}{11}{\color{blue}$\frac{\alpha}{2}$}}
\raisebox{60pt}{$\xleftrightarrow{\text{Isotopic }}\ $} 
\includegraphics[height=4.0cm]{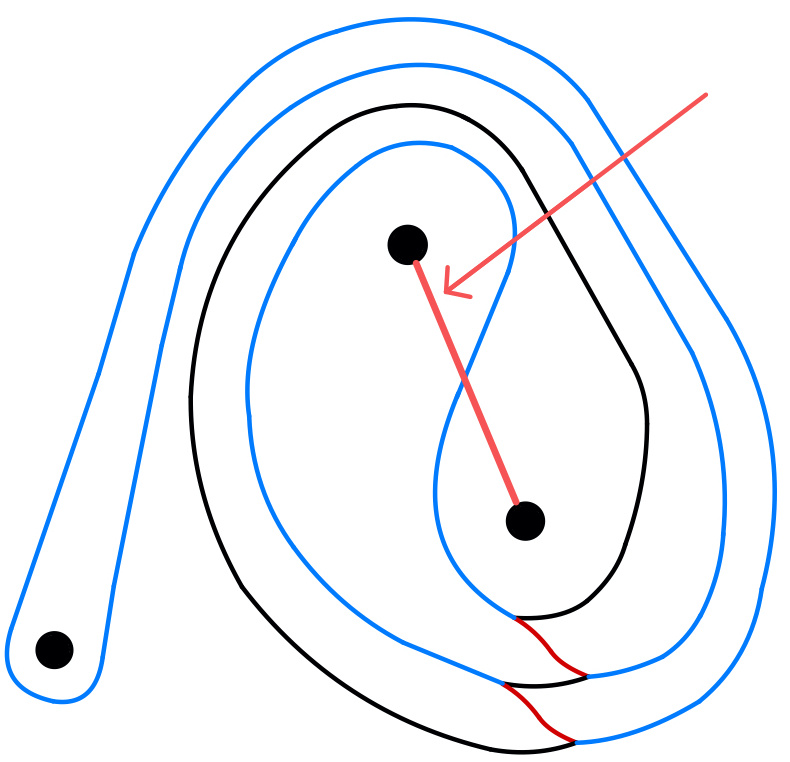} 
\put(-10, 105){\fontsize{11}{11}{\color{WildStrawberry}$(n-2)$ negative}}
\put(-10, 93){\fontsize{11}{11}{\color{WildStrawberry}half twists}}
\caption{The train track $\tau_{n+4}$. The weight of a red edge is $\frac{-1+(n+1)\alpha}{2}$ and of a black edge is $\frac{1-n\alpha}{2}$. 
The blue edges have the largest weight which is $\frac{\alpha}{2}=\frac{-1+(n+1)\alpha}{2}+\frac{1-n\alpha}{2}$. 
}
\label{figure-T_n}
\end{figure}

If $\tau_0=\W'_\alpha$, $\M'_\alpha$, or $\M_\alpha$, then the same result holds with the reflection of the figure about a vertical axis, the reflection about a horizontal axis, or a $180^\circ$-rotation, respectively. 
\end{theorem}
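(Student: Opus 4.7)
The plan is to carry out a direct inductive computation of the maximal splitting sequence $\tau_0 = \W_\alpha \msp \tau_1 \msp \cdots \msp \tau_{n+4}$, and then to establish stability of the triple-weight property by showing that maximal splitting sends a triple-weight train track to another triple-weight train track. First, I would label the six edges of $\W(\alpha, 1)$, apply the switch condition $a = b+c$ at each of its four vertices together with the normalization specified by the labels $\alpha$ and $1$, and solve the resulting linear system to express every edge weight in terms of $\alpha$. This pins down the unique maximum-weight edge (the long horizontal edge of the W) and so determines $\tau_1$.

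Second, I would iterate this process and track the weights through the first $n+4$ splittings. The key observation is that, because $n = \lfloor 1/\alpha \rfloor$ and $\frac{1}{n+1} < \alpha < \frac{1}{n}$, the splitting operation acts like a step of the continued fraction expansion of $\alpha$: each splitting decreases a distinguished weight of the form $1 - k\alpha$ by another $\alpha$, while the graph accumulates one additional negative half twist. I would prove by induction on $k \leq n+4$ that $\tau_k$ has an explicit form controlled by this twist counter, so that the sequence terminates at the configuration shown in Figure~\ref{figure-T_n}. Positivity of the weights $\frac{-1+(n+1)\alpha}{2}$ and $\frac{1-n\alpha}{2}$ follows immediately from $\frac{1}{n+1} < \alpha < \frac{1}{n}$, and a short computation using the switch condition $a+b=c+d$ from Figure~\ref{fig6} identifies $\frac{\alpha}{2}$ as the new largest weight, as claimed.

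Third, for the stability statement, I would argue that any triple-weight train track splits to another triple-weight train track. Given three distinct weights $w_1, w_2, w_3$ with $w_3$ maximal, the switch relations force each new post-splitting weight to be a difference of two old weights, and a finite case analysis (according to which weights are adjacent to the split edges in the underlying graph) shows that at most three distinct values appear among the new edge weights. Iterating this gives the triple-weight property for every $\tau_k$ with $k \geq n+4$. Finally, the statements for the starting train tracks $\W'_\alpha$, $\M'_\alpha$, and $\M_\alpha$ follow automatically from the $\W_\alpha$ case by applying the corresponding reflections and rotations, since maximal splitting is equivariant under these symmetries. The main obstacle will be the combinatorial bookkeeping of the intermediate splittings in step two — keeping track of which of the six edges is currently the maximum and how the underlying graph topology evolves as negative half twists accumulate — and the finite but somewhat delicate case analysis needed in step three to rule out a fourth weight value appearing after a split.
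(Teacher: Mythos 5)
Your proposal is correct and follows the same overall strategy as the paper: an explicit, step-by-step computation of the first $n+4$ maximal splittings, followed by a stability argument, with the $\W'_\alpha$, $\M_\alpha$, $\M'_\alpha$ cases handled by symmetry of the splitting operation. The paper organizes the first part a little differently: it isolates $n=1$ and $n=2$ as explicit base cases and, for $n\geq 3$, first proves an auxiliary lemma (Lemma~\ref{LemTT}) by induction showing $\tau_{n+1}$ is a canonical train track $D_n$, then performs three more splittings to reach $\tau_{n+4}$; your direct induction on $k$ that tracks the accumulating twist count is the same computation, just not factored through the intermediate $D_n$. For stability, the paper's argument in this proof is terse --- it just observes that $\tau_{n+4}$ has six edges and four vertices and maximal splitting preserves these counts, leaving the structural reason why this forces three weight-classes implicit --- and defers the full case analysis to Theorem~\ref{thm:closed-system}, where the four types I, II, I$'$, II$'$ and their closure under splitting are worked out. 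Your proposal to carry out that case analysis inline is a legitimate alternative and in fact corresponds to what the paper proves later via Proposition~\ref{prop:algorithm of 4-tuple}. One caveat worth making explicit in your step three: you must rule out the degenerate outcome in which a split produces a zero weight (when the two old weights adjacent to the split edge coincide); for a triple-weight train track whose weights lie in $\tfrac{1}{2}\Z+\tfrac{1}{2}\alpha\Z$ with $\alpha$ irrational, the switch condition forces the adjacent weights to be the two smaller weight values in complementary pairs, so the new weight is their nonzero difference, but this should be stated rather than absorbed into ``delicate bookkeeping.''
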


An immediate consequence is: 
\begin{cor}
The length $l$ of the pre-periodic part must satisfy $l \geq n+4$. 
\end{cor}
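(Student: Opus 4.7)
My plan is to show that the train track $\tau_l$ at the start of the Agol cycle must itself be of triple-weight type; the bound $l \geq n+4$ then follows immediately from Theorem~\ref{ThmTTT}, which identifies $\tau_{n+4}$ as the \emph{first} triple-weight train track in the splitting sequence.

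First, I would exploit the periodicity inherent in the Agol cycle. By Definition~\ref{def of Agol cycle}, the cycle closes via $\tau_q = \lambda^{-1}\beta(\tau_l)$, and iterating this relation yields
\[
\tau_{l + k(q-l)} = \lambda^{-k}\beta^{k}(\tau_l) \qquad \text{for every } k \geq 0.
\]
Next, I would observe that the property of having exactly three distinct edge weights is preserved by both operations appearing on the right-hand side: the scaling by $\lambda^{-k}$ multiplies every weight by a common positive constant and therefore cannot merge or split weight values, while the homeomorphism $\beta^{k}$ only bijectively relabels the edges without altering the associated weights. Combining these observations, $\tau_l$ is triple-weight if and only if $\tau_{l + k(q-l)}$ is triple-weight.

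Finally, I would choose $k$ large enough that $l + k(q-l) \geq n+4$. By Theorem~\ref{ThmTTT}, the track $\tau_{l+k(q-l)}$ is then triple-weight, which by the equivalence above forces $\tau_l$ to be triple-weight as well. Since $\tau_{n+4}$ is the first triple-weight train track appearing in the sequence $\tau_0 \rightharpoonup \tau_1 \rightharpoonup \cdots$, this is only possible when $l \geq n+4$.

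I do not anticipate any substantial obstacle here; the only point requiring care is the invariance of the triple-weight property under both the action of $\beta$ and the rescaling by $\lambda^{-1}$, and this is immediate from the definition of a measured train track and of the induced action of a mapping class on such a train track.
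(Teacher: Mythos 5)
Your proposal is correct and is essentially the argument the paper intends: the corollary is stated as an immediate consequence of Theorem~\ref{ThmTTT} with no separate proof, and your reasoning (periodicity $\tau_{l+k(q-l)}=\lambda^{-k}\beta^{k}(\tau_l)$ together with the invariance of the triple-weight property under homeomorphism and scaling, plus the fact that $\tau_{n+4}$ is the \emph{first} triple-weight track) is exactly the justification being taken for granted.
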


Here is a lemma that is needed to prove Theorem~\ref{ThmTTT}. 

\begin{lem} \label{LemTT}
Suppose that $\tau_0=W(\alpha, 1)$. If $n = \lfloor \frac{1}{\alpha} \rfloor \geq 3$, then 
$\tau_{n+1}=D_n$ as in Figure~\ref{fig:traintrackDn}. 
\end{lem}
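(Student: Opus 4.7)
My plan is to perform the $n+1$ maximal splittings from $\tau_0 = W(\alpha,1)$ explicitly and keep track of both the combinatorial shape and the edge weights at every step. First I would use the switch condition $a = b + c$ at each of the four vertices of $\tau_0$ to determine the weights of all six edges from the two input weights $\alpha$ and $1$; in particular, the two interior edges should carry the maximum weight $\alpha + 1$, and the first maximal splitting is performed along those edges using the local rule of Figure~\ref{fig6}.

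Next I would prove by induction that for every $k$ with $0 \le k \le n$, the train track $\tau_k$ is combinatorially equivalent to $\tau_0$ but with one outer weight replaced by $1 - k\alpha$ (and the remaining edge weights determined via the switch conditions). The crucial numerical input is that $n = \lfloor 1/\alpha \rfloor$, which forces $1 - k\alpha > \alpha$ for every $k \le n-1$, so the maximum-weight edges are the same pair of interior edges throughout all $n$ splittings, and each inductive step is a single application of Figure~\ref{fig6} that decreases the varying weight from $1 - k\alpha$ to $1 - (k+1)\alpha$.

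At $\tau_n$ the varying weight $1 - n\alpha$ lies in $(0, \alpha)$, so the role of maximum weight passes to a different edge, and the $(n+1)$-st maximal splitting reshapes the train track to produce the configuration $D_n$ of Figure~\ref{fig:traintrackDn}. The assumption $n \ge 3$ guarantees that the inductive pattern runs long enough for the shape to stabilize before this regime change and that no intermediate splitting produces a degenerate edge of weight zero or forces an early coincidence in the edges of maximum weight.

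The main obstacle I anticipate is verifying that the combinatorial shape of $\tau_k$ really remains isomorphic to $\tau_0$, up to relabeling, throughout the induction. A maximal splitting locally alters the incidence of edges and vertices, so although the weight bookkeeping is essentially one-dimensional (only one number changes at each step), showing that the global combinatorial type is preserved demands a careful incidence-level check. I expect the cleanest route is to label the six edges and four vertices of $\tau_0$ once and for all, record the incidence data together with the weight vector, and then verify that one application of the rule in Figure~\ref{fig6} to the current maximum-weight edges yields the same labeled incidence pattern with only the distinguished weight updated; once this invariance is established for $k < n$, the final transition to $D_n$ reads off from one last local application of the splitting rule.
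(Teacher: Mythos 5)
Your proposal diverges from the paper's proof in a way that creates a genuine gap.  The paper inducts on $n = \lfloor 1/\alpha \rfloor$ itself: it verifies the base case $n=3$ by explicitly carrying out all four maximal splittings (with isotopies) to reach $D_3$, and then for the inductive step observes that whenever $\alpha < 1/(n+1)$ we also have $\alpha < 1/n$, so by the inductive hypothesis the first $n+1$ splittings already bring $\tau_0$ to $D_n$; one additional split along the edge of weight $\tfrac{1-(n-2)\alpha}{2}$ then produces $D_{n+1}$.  In contrast, you propose an induction on the splitting index $k$ within a fixed $\alpha$, with the inductive claim that for all $0 \le k \le n$ the track $\tau_k$ is combinatorially equivalent to $\tau_0$ with one outer weight replaced by $1 - k\alpha$.

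That inductive claim is false on two counts.  First, the shape is not stable over the first few splittings: the base case $n=3$ in the paper passes through several combinatorially distinct configurations (and requires explicit isotopies) before reaching $D_3$; only after this initial transient is a repetitive, shape-preserving pattern (the $D_n$ family with spiral half-twists) achieved, and even that is "the same shape'' only up to an absorbed half-twist, not via a labeled incidence bijection with $\tau_0$.  Second, the weight bookkeeping is wrong: the actual splitting weights in the base case are $\alpha+1$, $1$, $\tfrac{\alpha+1}{2}$, $\tfrac{1}{2}$, and the weights that ultimately appear in $D_n$ are of the form $\tfrac{\alpha}{2}$, $\tfrac{1-(n-2)\alpha}{2}$, $\tfrac{1-(n-1)\alpha}{2}$, $\tfrac{1-n\alpha}{2}$ — not $1 - k\alpha$.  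The factor of $\tfrac{1}{2}$ is produced by the early splittings and is essential for the later weight comparisons, so an induction that never records it cannot be repaired by a relabeling.  Finally, "at $\tau_n$ the varying weight $1-n\alpha$ lies in $(0,\alpha)$, so the maximum passes to a different edge and the $(n+1)$-st splitting produces $D_n$'' misplaces the regime change: $D_n$ is itself the result of the $(n+1)$-st splitting, its smallest weight $\tfrac{1-n\alpha}{2}$ is still positive, and the transition you describe happens later, in Theorem~\ref{ThmTTT}, not within Lemma~\ref{LemTT}.  To make your strategy work you would need to separate the initial transient (the first four splittings, proved by direct computation exactly as the paper's $n=3$ base case) from the stable $D_j \to D_{j+1}$ phase, and carry the correct halved weights through; at that point you would essentially be reproducing the paper's induction on $n$.
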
 

\begin{figure}[ht]
\hspace{-3cm}
\raisebox{70pt}{\fontsize{11}{15}$D_n=$} \qquad 
\includegraphics[height=4.5cm]{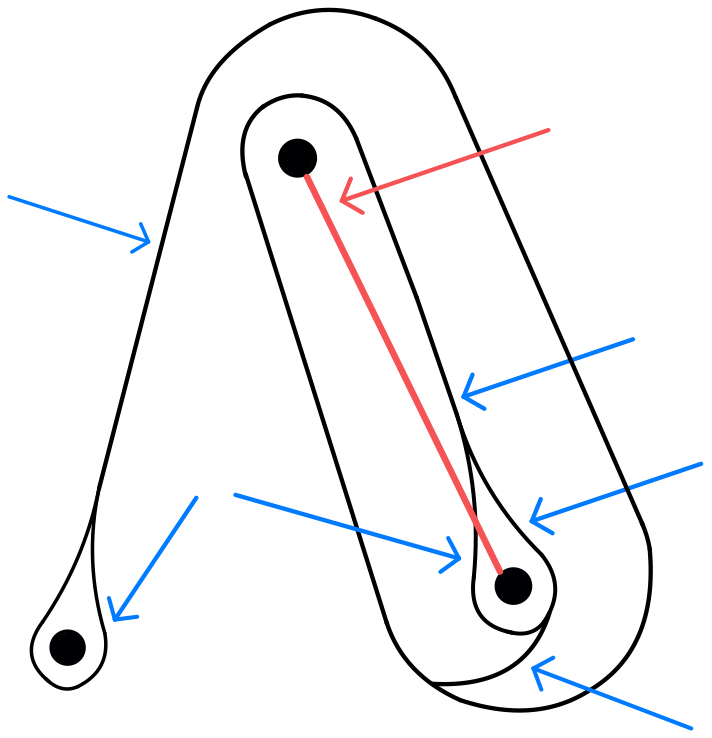}
\put(-135, 95){\fontsize{11}{15}$\alpha$}
\put(-90, 50){\fontsize{11}{15}$\frac{\alpha}{2}$}
\put(-25, 105){\fontsize{11}{15}{\color{WildStrawberry}$(n-3)$ negative half twists}}
\put(-10, 75){\fontsize{11}{15}$\frac{1-(n-2)\alpha}{2}$}
\put(0, 45){\fontsize{11}{15}$\frac{1-(n-1)\alpha}{2}$}
\put(0, 0){\fontsize{11}{15}$\frac{1-n\alpha}{2}$}
\caption{Train track $D_n$}
\label{fig:traintrackDn}
\end{figure}

\begin{proof}
This is a proof by induction. 
We begin with the base case of $n=3$ and detail the splittings below with the labeled edges. 

\begin{eqnarray*}
\raisebox{40pt}{$\tau_0=$ } &&
\includegraphics[height=3.0cm]{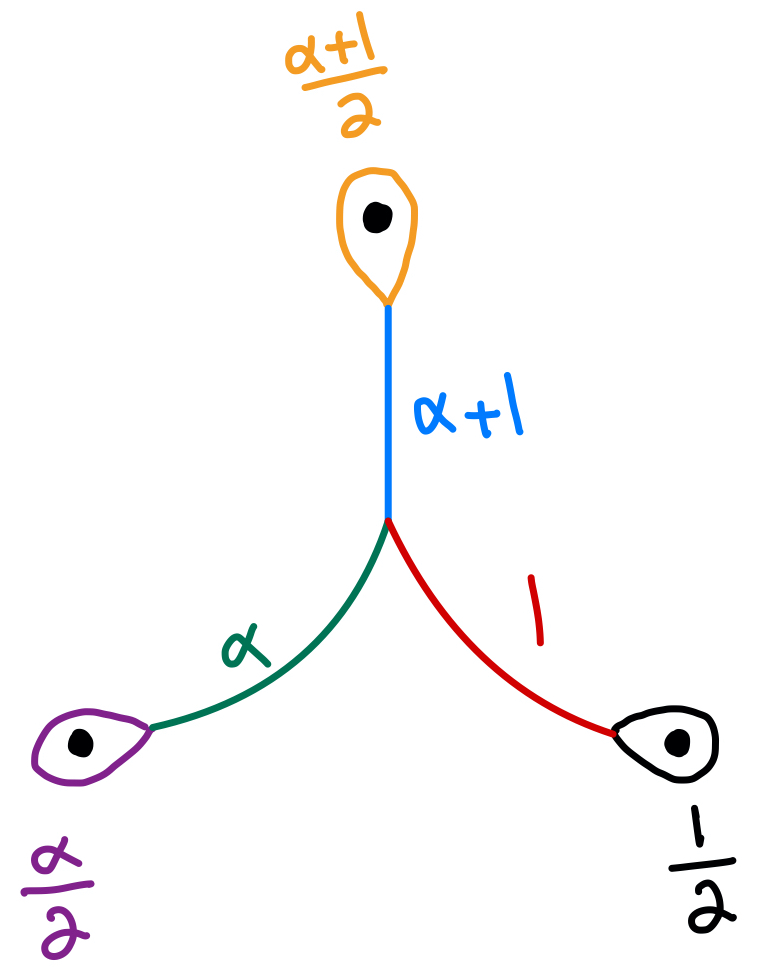}
\raisebox{40pt}{$\xrightharpoonup{\text{Splitting } \alpha+1} \ \tau_1 = $} 
\includegraphics[height=3.0cm]{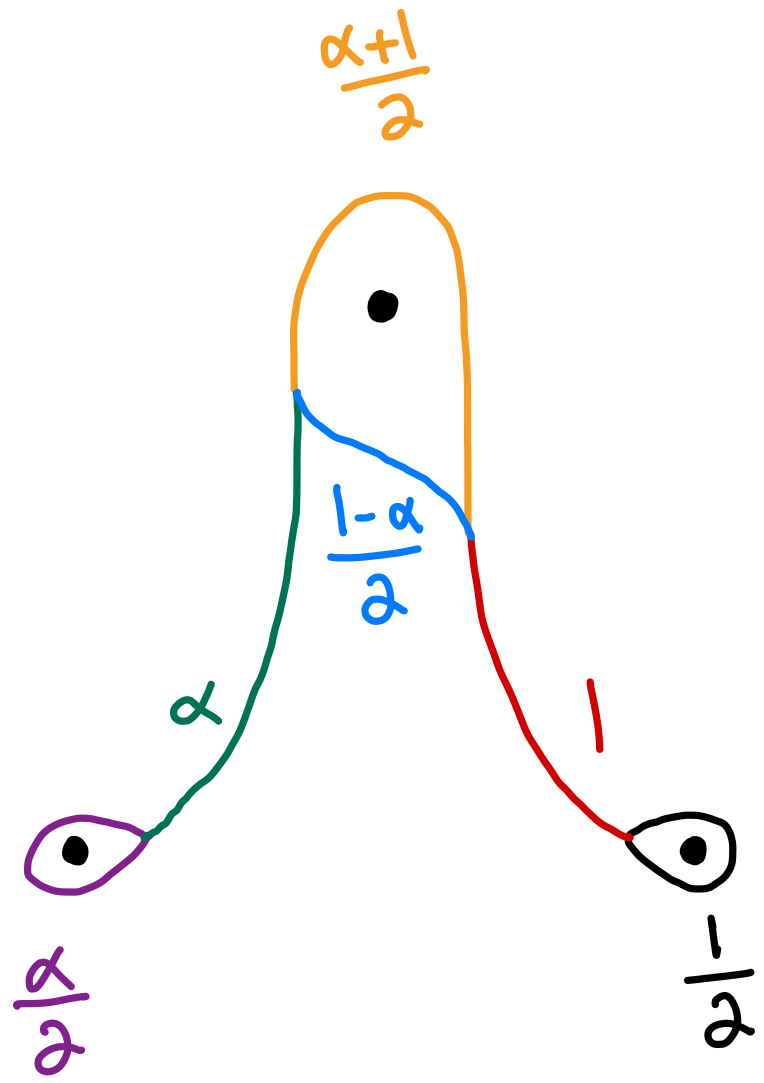}
\raisebox{40pt}{$\xrightharpoonup{\text{Splitting } 1} \ \tau_2=$} 
\includegraphics[height=3.0cm]{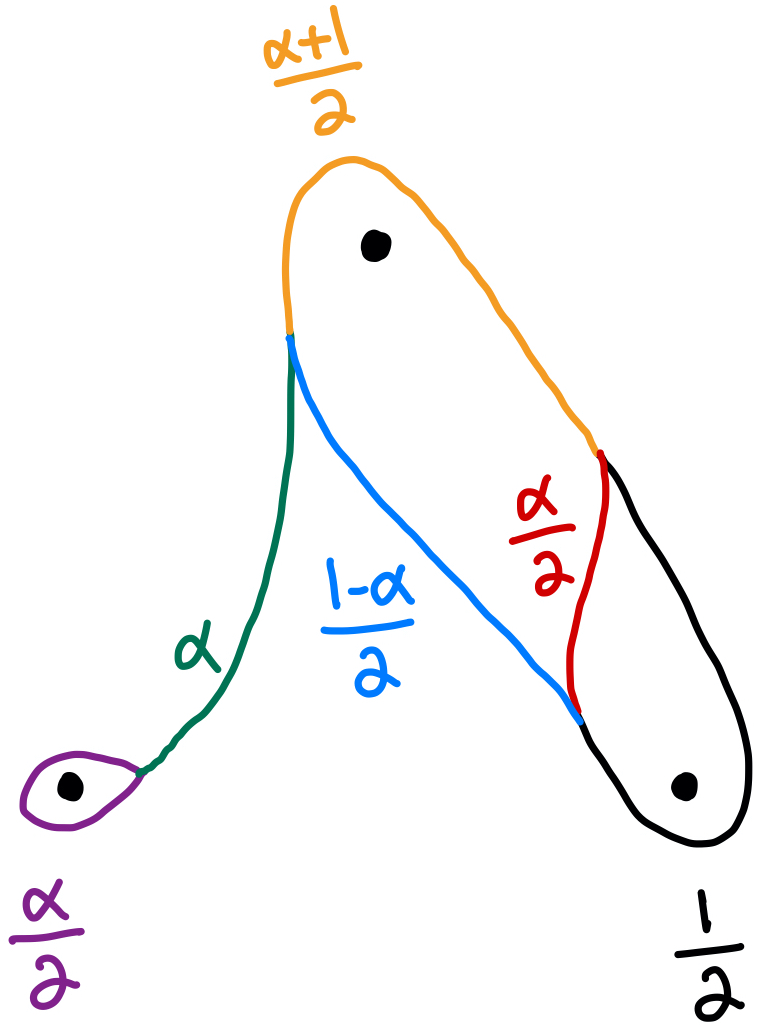} \\
&& \raisebox{40pt}{$\xrightarrow{\text{Isotopy}}$} 
\includegraphics[height=3.0cm]{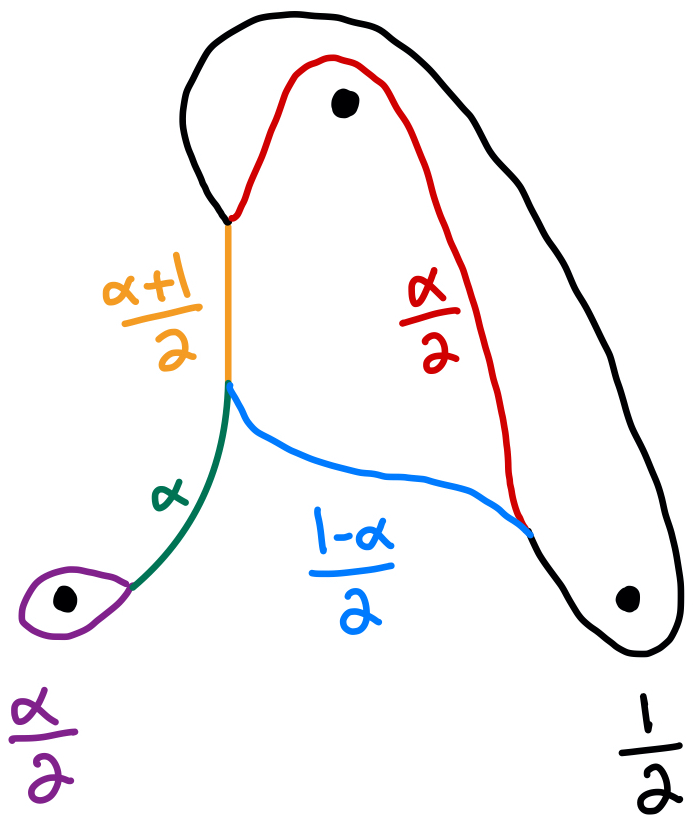}   
\raisebox{40pt}{$\xrightharpoonup{\text{Splitting } \frac{\alpha+1}{2}} \ \tau_3=$} 
\includegraphics[height=3.0cm]{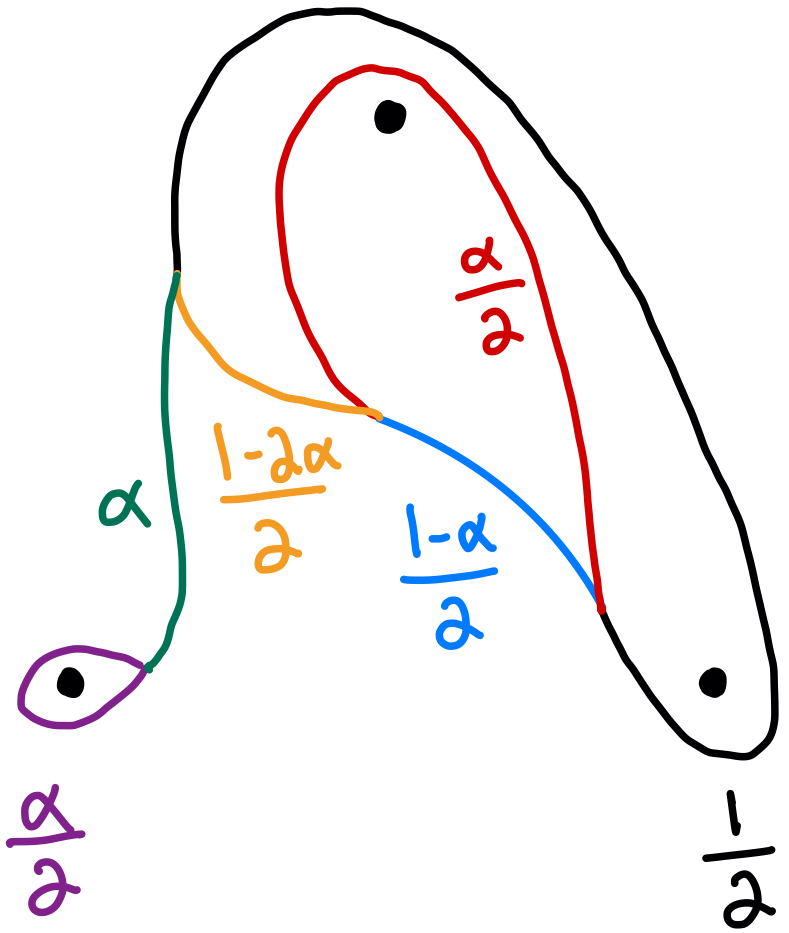}   
\raisebox{40pt}{$\xrightarrow{\text{Isotopy}}$} \\
&& \includegraphics[height=3.0cm]{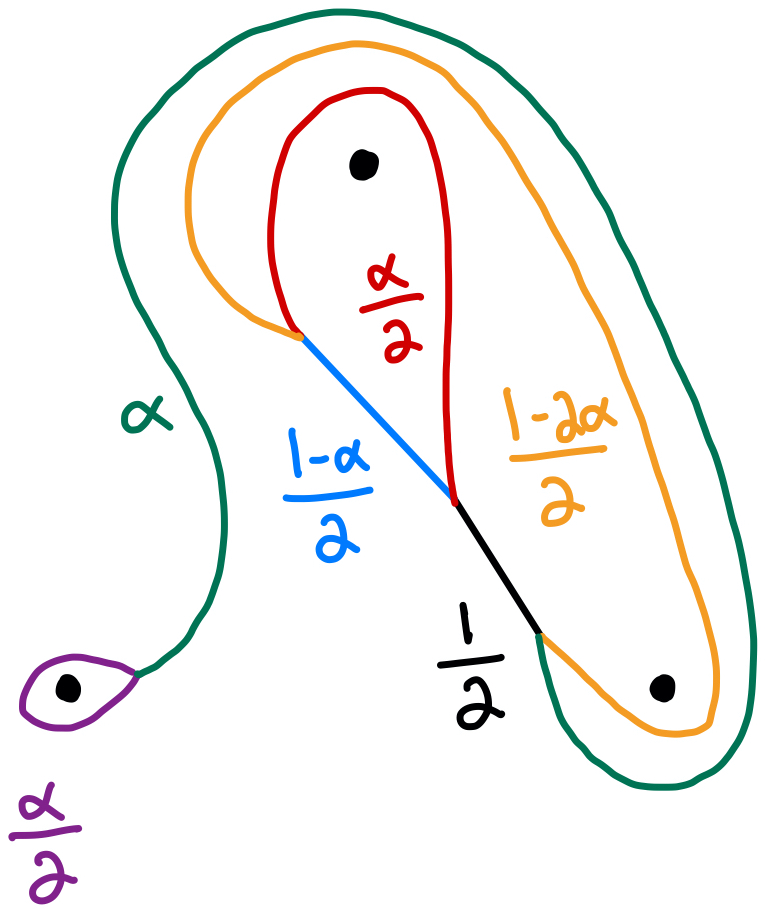}   
\raisebox{40pt}{$\xrightharpoonup{\text{Splitting } \frac{1}{2}}\ \tau_4=$} 
\includegraphics[height=3.0cm]{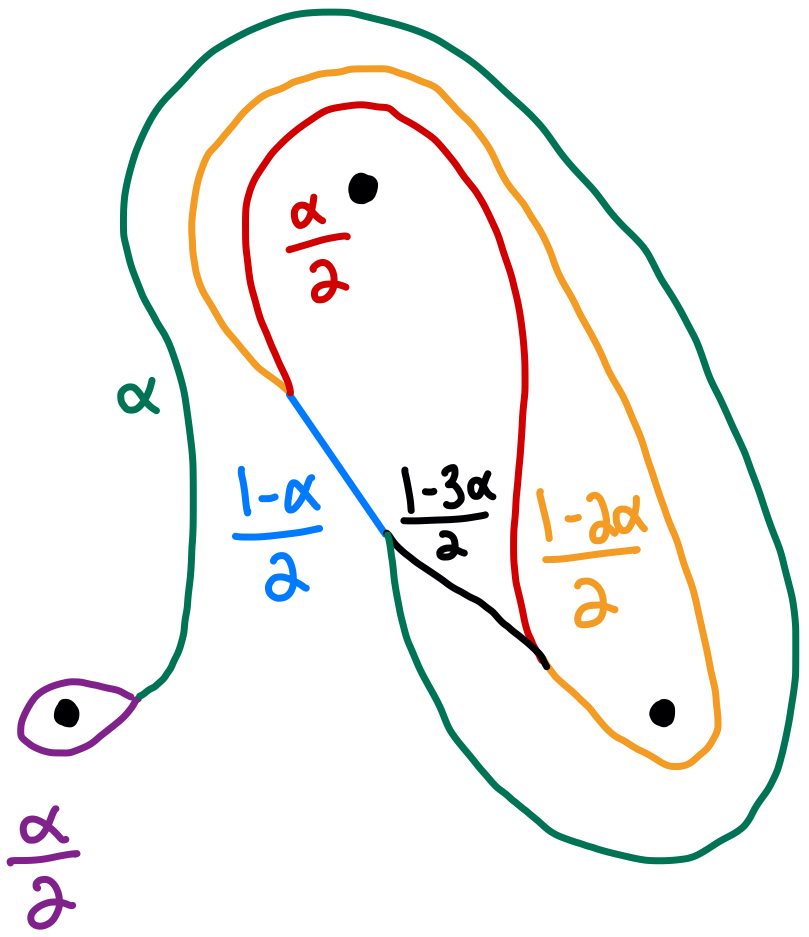}   
\raisebox{40pt}{$\xrightarrow{\text{Isotopy}}$} 
\includegraphics[height=3.0cm]{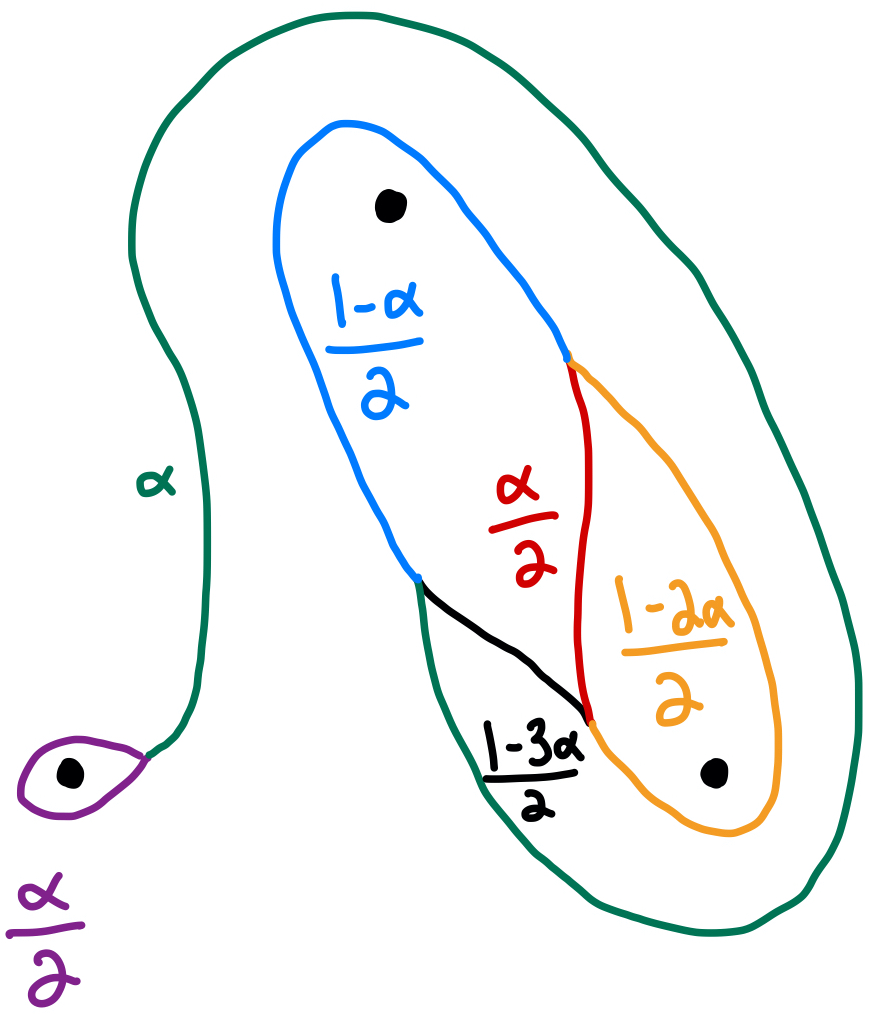} \raisebox{40pt}{$=D_3$ }
\end{eqnarray*}

Inductive Hypothesis: Suppose that if $\alpha<1/n$ and $n\geq 3$, then after $(n+1)$ maximal splittings, the train track $\tau_0$ becomes $D_n$.

Inductive Step: Assume that $\alpha<\frac{1}{n+1}$. 
Since $\frac{1}{n+1}<\frac{1}{n}$, $\alpha<1/n$ and we can apply the inductive hypothesis. 
Thus, after $(n+1)$ maximal splittings, $\tau_0$ becomes $D_n$. 
We perform another maximal splitting to the edge $\frac{1-(n-2)\alpha}{2}$ to obtain $D_{n+1}$. 
\vspace{-.03cm}
\begin{equation*}
\raisebox{40pt}{$D_n=$ } 
\includegraphics[height=3.5cm]{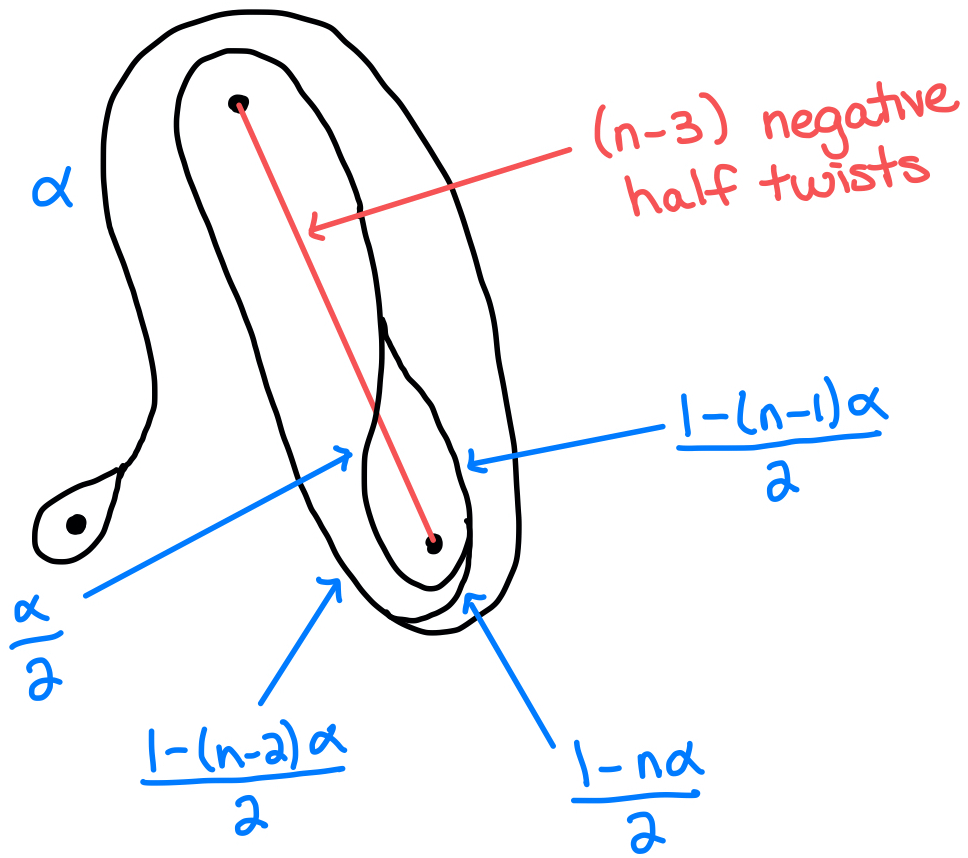}
\raisebox{40pt}{$\rightharpoonup$} 
\includegraphics[height=3.5cm]{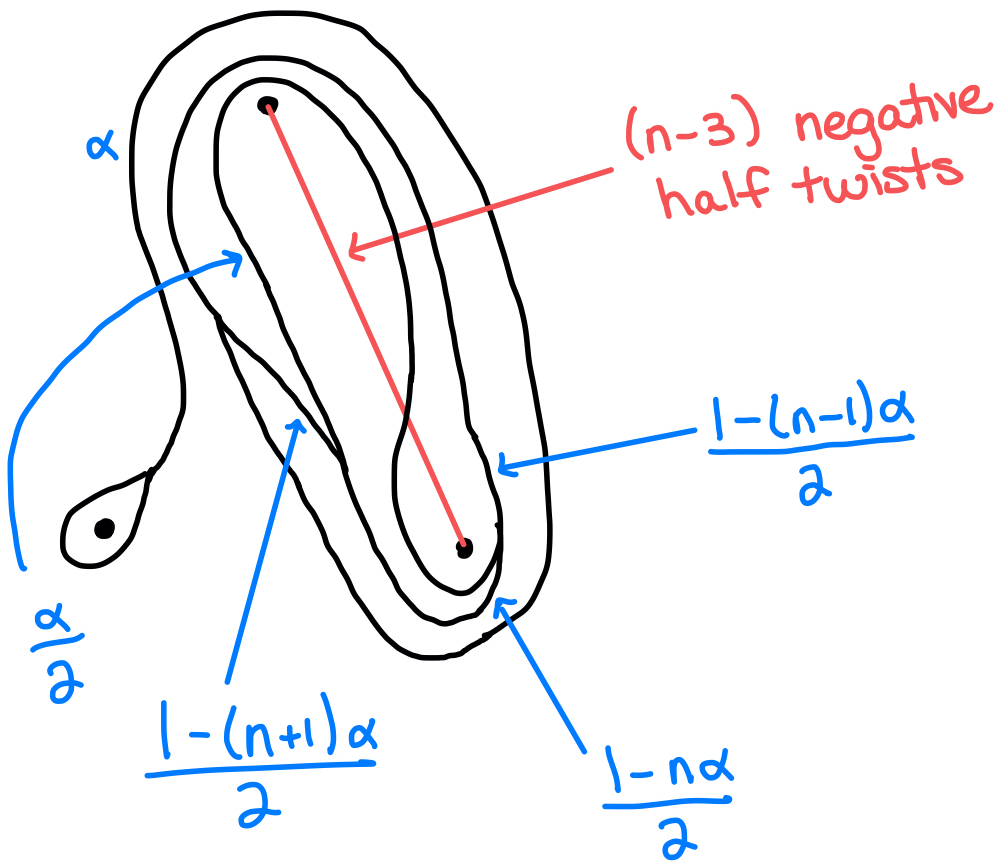}
\raisebox{40pt}{$\xrightarrow{\text{(+)twist}}$} 
\includegraphics[height=3.5cm]{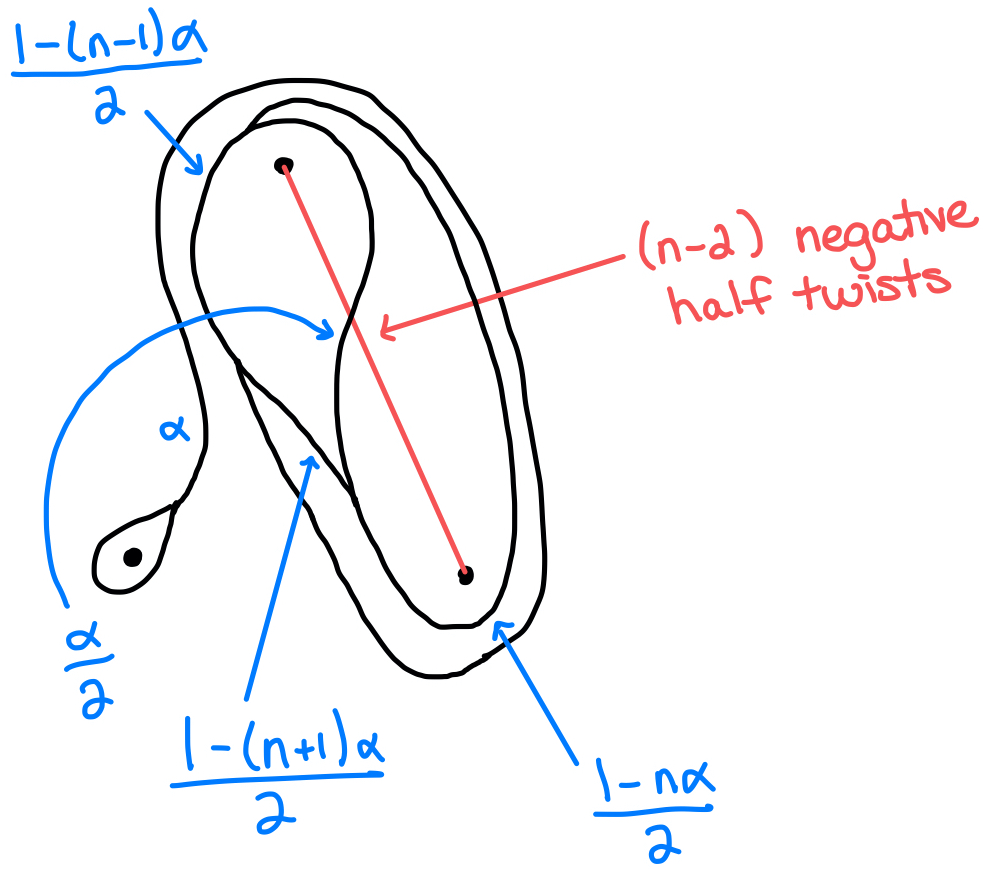}
\raisebox{40pt}{$=D_{n+1}$ }
\end{equation*}
\end{proof}

Now we are ready to prove the Triple-Weight Train Track Theorem. 

\begin{proof}[Proof of Theorem \ref{ThmTTT}]
We prove that the train track $\tau_{n+3}$ is the train track in Figure~\ref{figure-T_n}. 
Once this is done, it is easy to see that $\tau_i$ has triple-weight type for all $i\geq n+3$ since $\tau_{n+3}$ has six edges and the maximal splitting preserves the numbers of edges and vertices. 

We first provide the explicit details for $n=1$ and $n=2$ and then apply Lemma~\ref{LemTT} for $n\geq 3$.\\

\noindent ($n=1$): Note that $\frac{1}{2} < \alpha < 1$. 
\begin{equation*}
\raisebox{40pt}{$\tau_0=$ } 
\includegraphics[height=3.5cm]{LemA.jpeg}
\raisebox{40pt}{$\xrightharpoonup{\text{Splitting } \alpha+1}\ \tau_1=$} 
\includegraphics[height=3.5cm]{LemB.jpeg}
\raisebox{40pt}{$\xrightharpoonup{\text{Splitting } 1}\ \tau_2=$} 
\includegraphics[height=3.5cm]{LemC.jpeg}
\end{equation*}
\begin{equation*}
\raisebox{40pt}{$\xrightharpoonup{\text{Splitting } \frac{\alpha+1}{2}}\ \tau_3=$} 
\includegraphics[height=3.5cm]{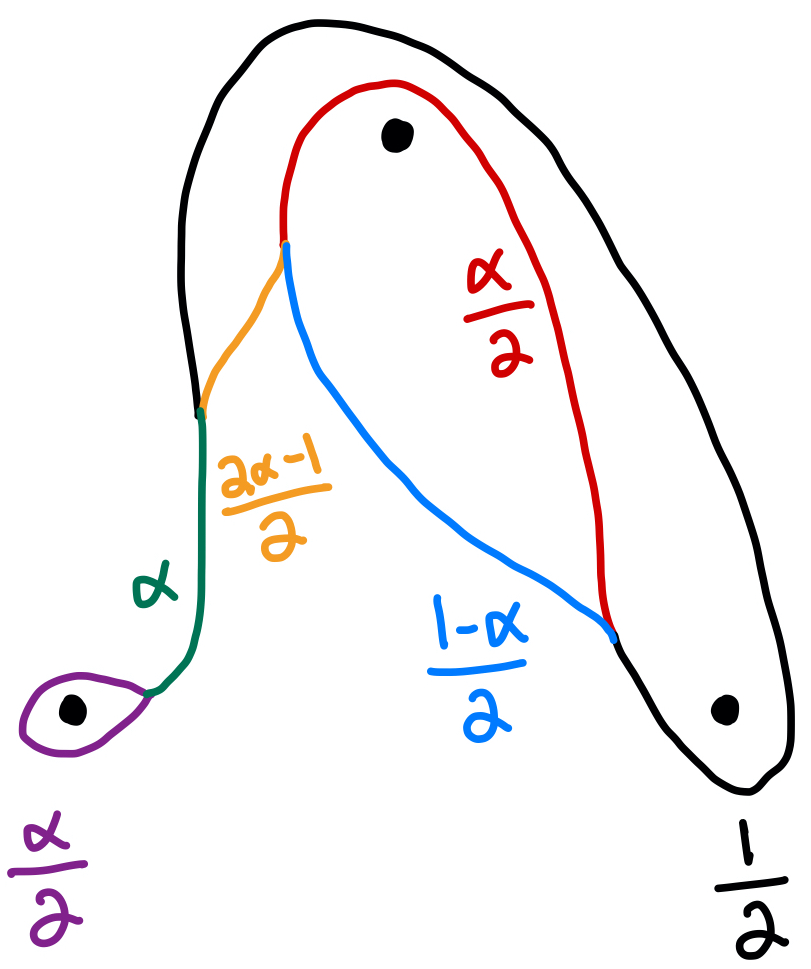}   
\raisebox{40pt}{$\xrightharpoonup{\text{Splitting } \alpha }\ \tau_4=$} 
\includegraphics[height=3.5cm]{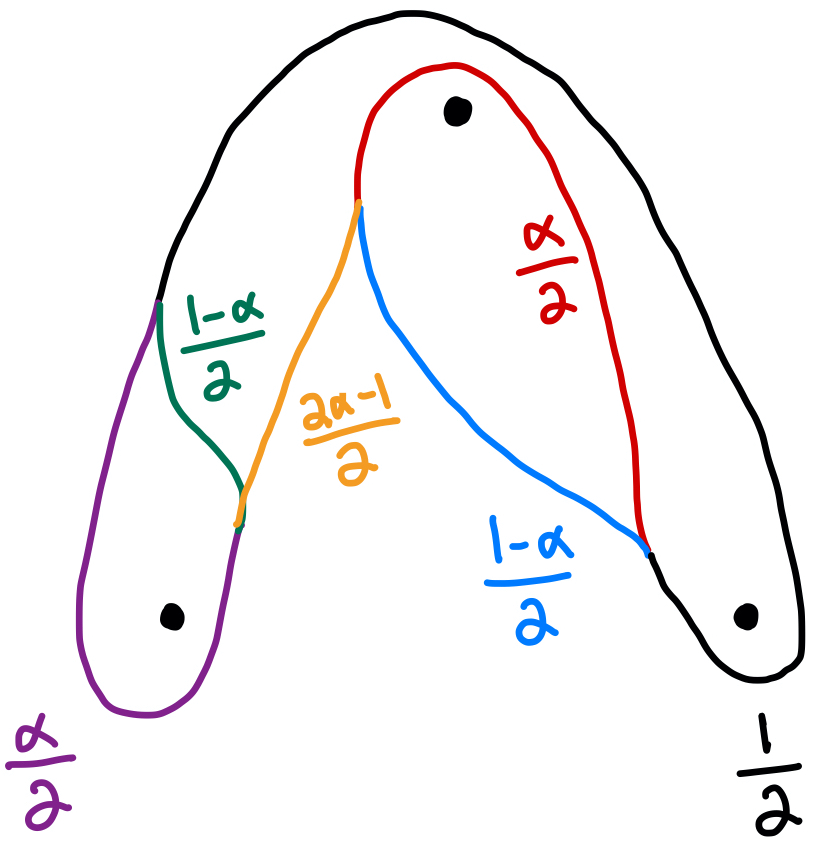} 
\end{equation*}
\begin{equation*}
\raisebox{40pt}{$\xrightharpoonup{\text{Splitting } \frac{1}{2} }\tau_5= $}
\includegraphics[height=3.5cm]{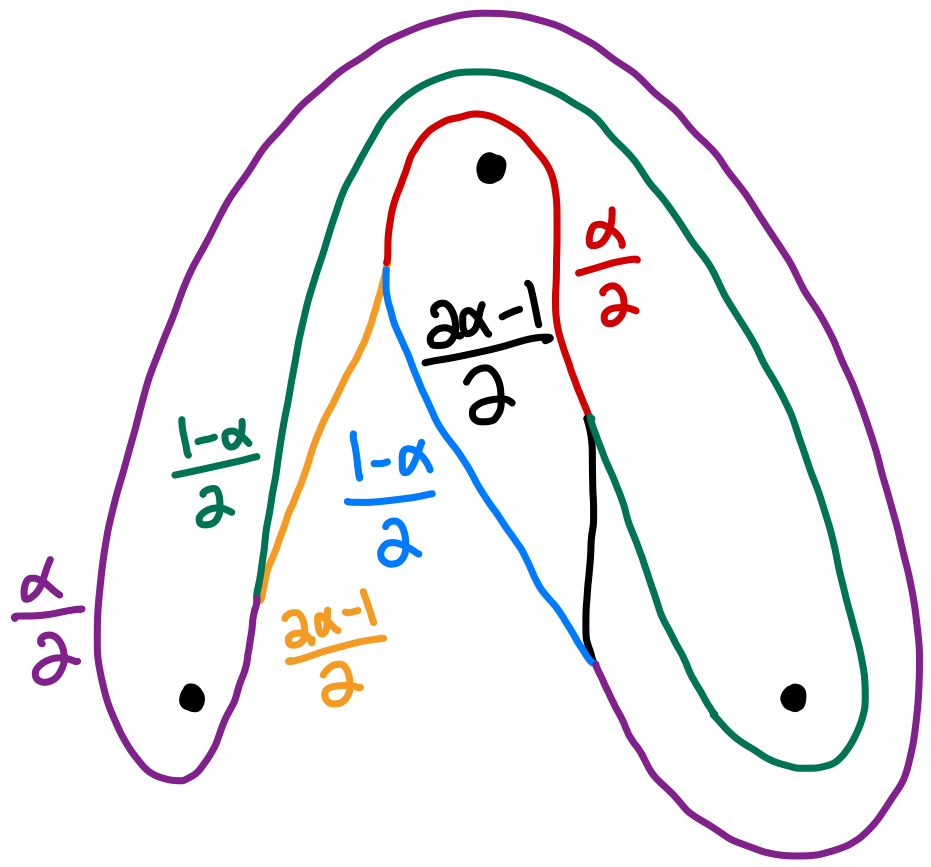}  
\raisebox{40pt}{$\xrightarrow{\text{Isotopy}}$}  
\includegraphics[height=3.5cm]{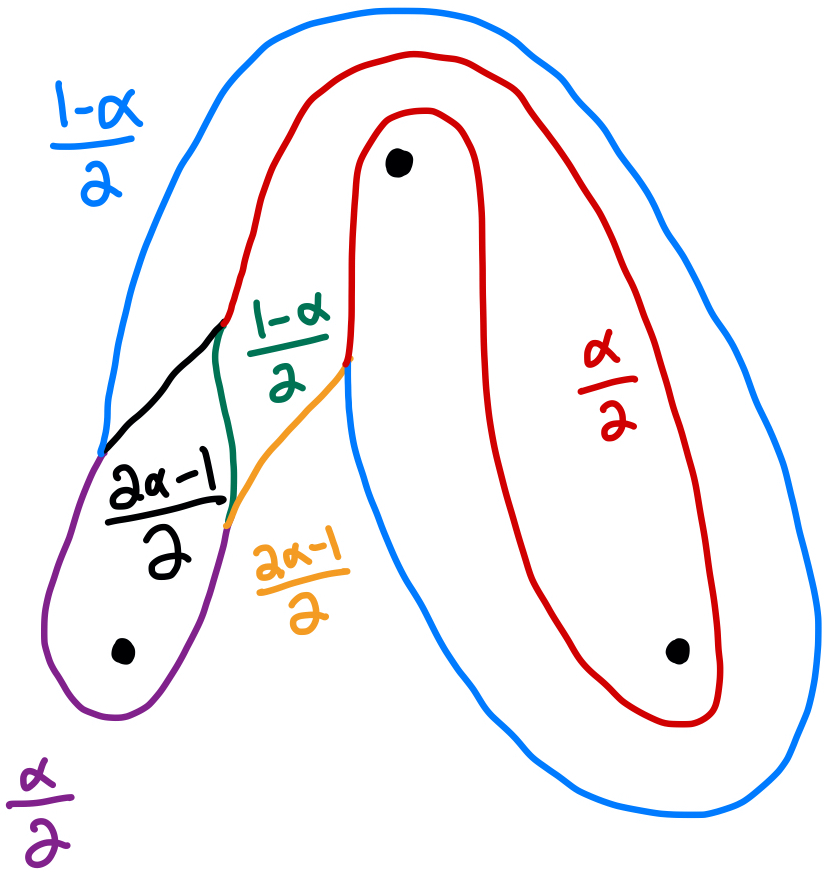}  \raisebox{40pt}{$=T_1$ } 
\end{equation*}

\noindent ($n=2$): Note that $\frac{1}{3} <\alpha < \frac{1}{2}$. Thus the initial part of the sequence $\tau_0 \rightharpoonup \tau_1 \rightharpoonup \tau_2$ is exactly the same as the $n=1$ case. The difference between $n=1$ and $n=2$ cases occur at $\tau_3$. 
\begin{equation*}
\raisebox{40pt}{$\tau_0=$ } 
\includegraphics[height=3.5cm]{LemA.jpeg}
\raisebox{40pt}{$\xrightharpoonup{\text{Splitting } \alpha+1} \tau_1 \xrightharpoonup{\text{Splitting } 1} \tau_2$ } 
\raisebox{40pt}{$\xrightharpoonup{\text{Splitting } \frac{\alpha+1}{2}}\ \tau_3=$} 
\includegraphics[height=3.5cm]{LemE.jpeg}   
\end{equation*}
\begin{equation*}
\raisebox{40pt}{$\xrightharpoonup{\text{Splitting } \frac{1}{2}} \ \tau_4=$} 
\includegraphics[height=3.5cm]{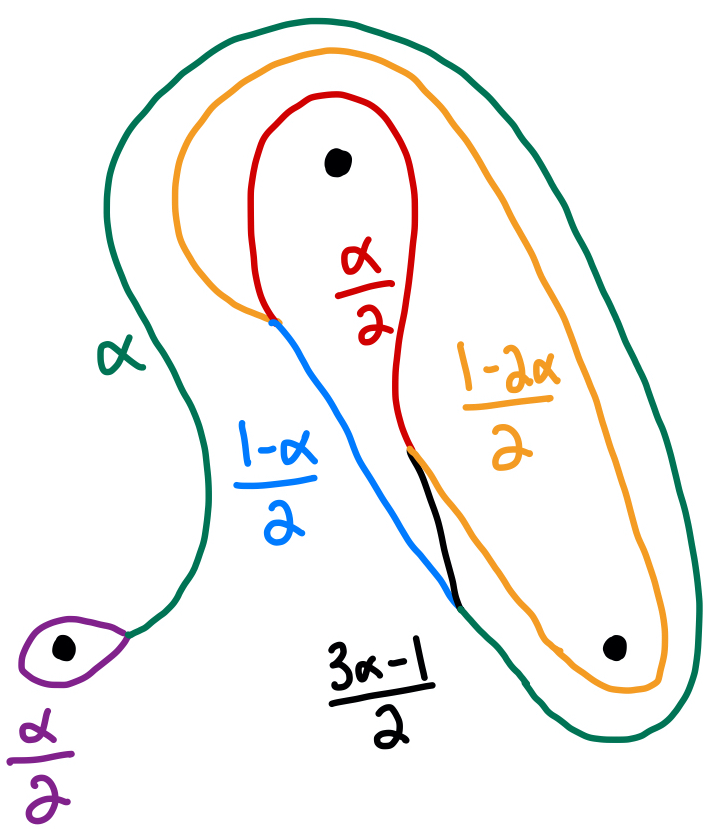}   
\raisebox{40pt}{$\xrightharpoonup{\text{Splitting } \alpha }\ \tau_5=\ $}
\includegraphics[height=3.5cm]{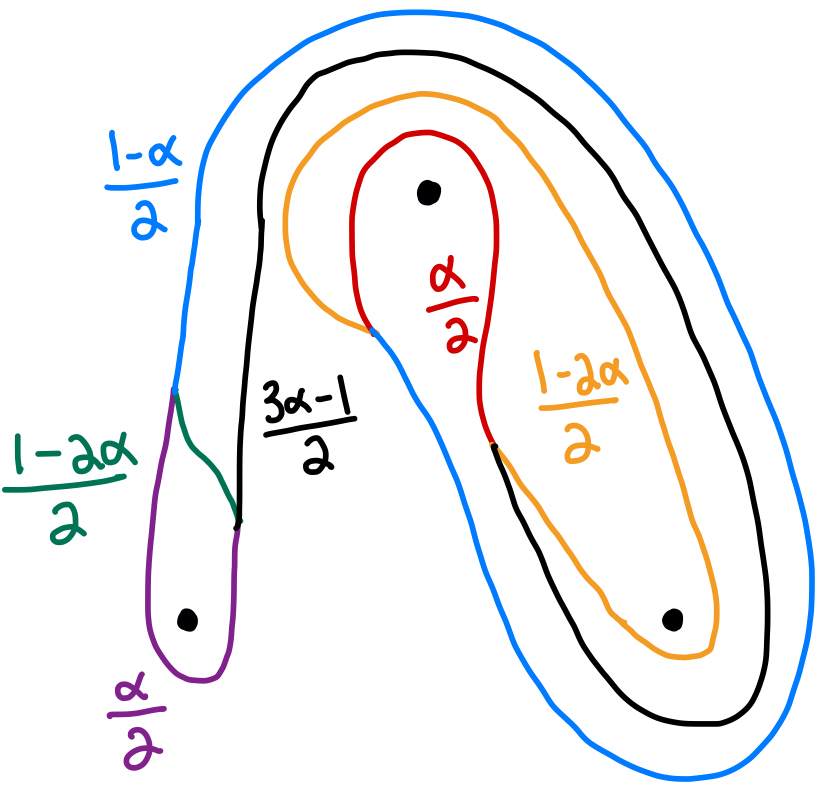}
\end{equation*}
\begin{equation*}
\raisebox{40pt}{$\xrightharpoonup{\text{Splitting } \frac{1-\alpha}{2} }\ \tau_6=$} 
\includegraphics[height=3.5cm]{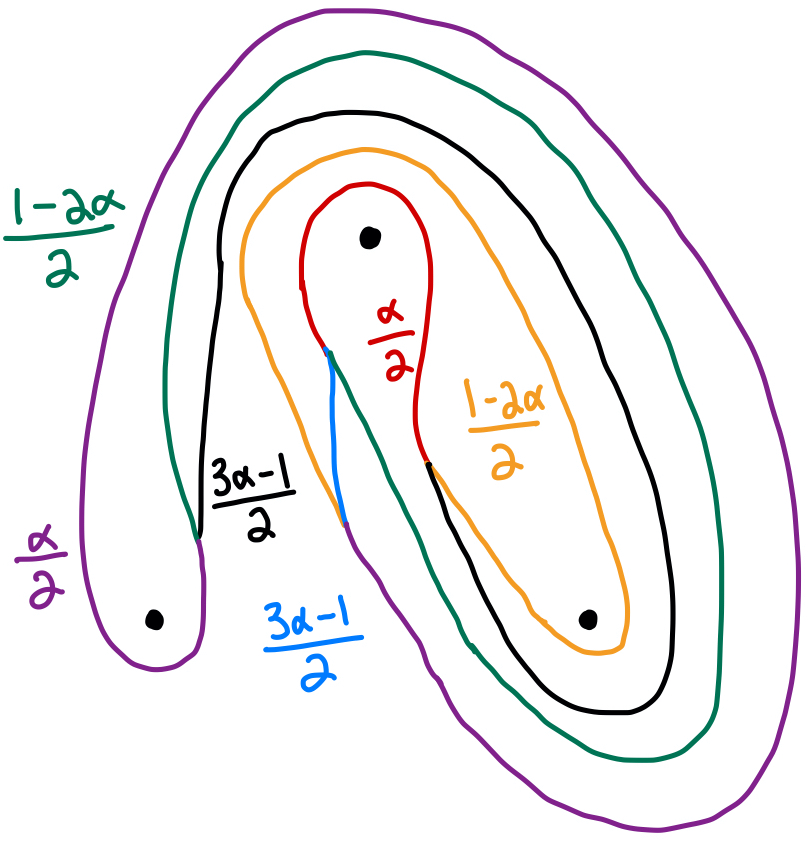}
\raisebox{40pt}{$\xrightarrow{\text{Isotopy}}$} 
\includegraphics[height=3.5cm]{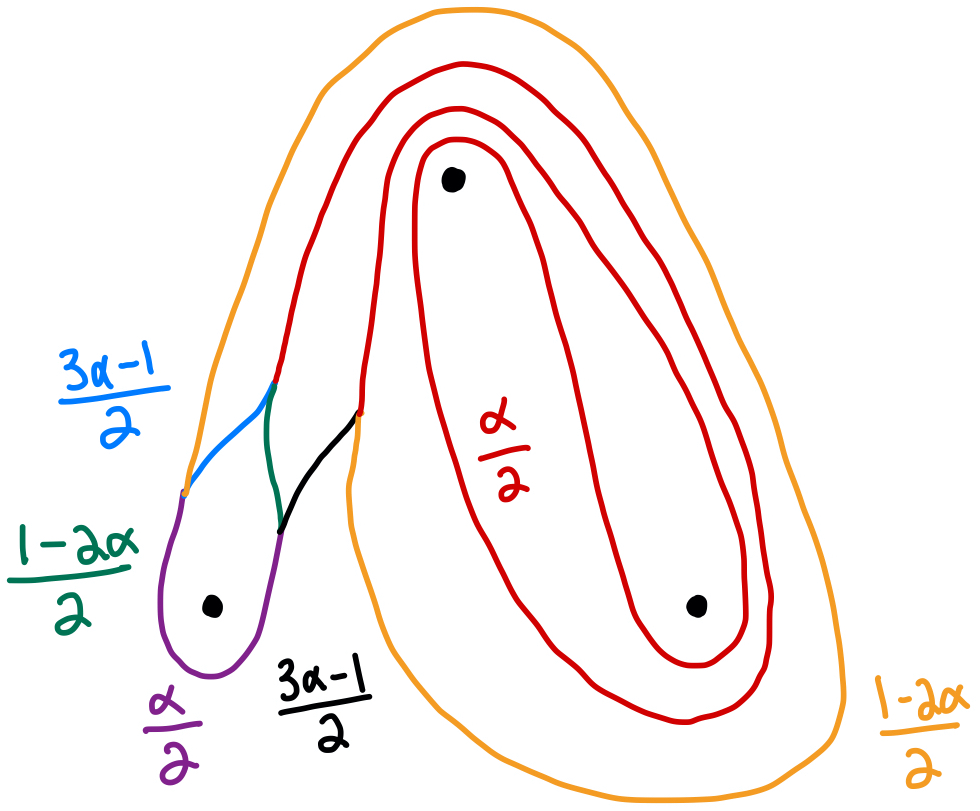}\raisebox{40pt}{$=T_2$ } 
\end{equation*}

Assume $n \geq 3$ and $\frac{1}{n+1}<\alpha<\frac{1}{n}$. 
By Lemma \ref{LemTT}, after $(n+1)$ maximal splittings, we arrive at train track $D_n$. 
We now apply three more maximal splittings and obtain $T_n$.

\begin{equation*}
\raisebox{40pt}{$\tau_{n+1}=D_n=$ } 
\includegraphics[height=3.5cm]{Dn.jpeg}
\raisebox{40pt}{$\xrightharpoonup{\text{Splitting } \frac{1-(n-2)\alpha}{2} }\ \tau_{n+2}=$} 
\includegraphics[height=3.5cm]{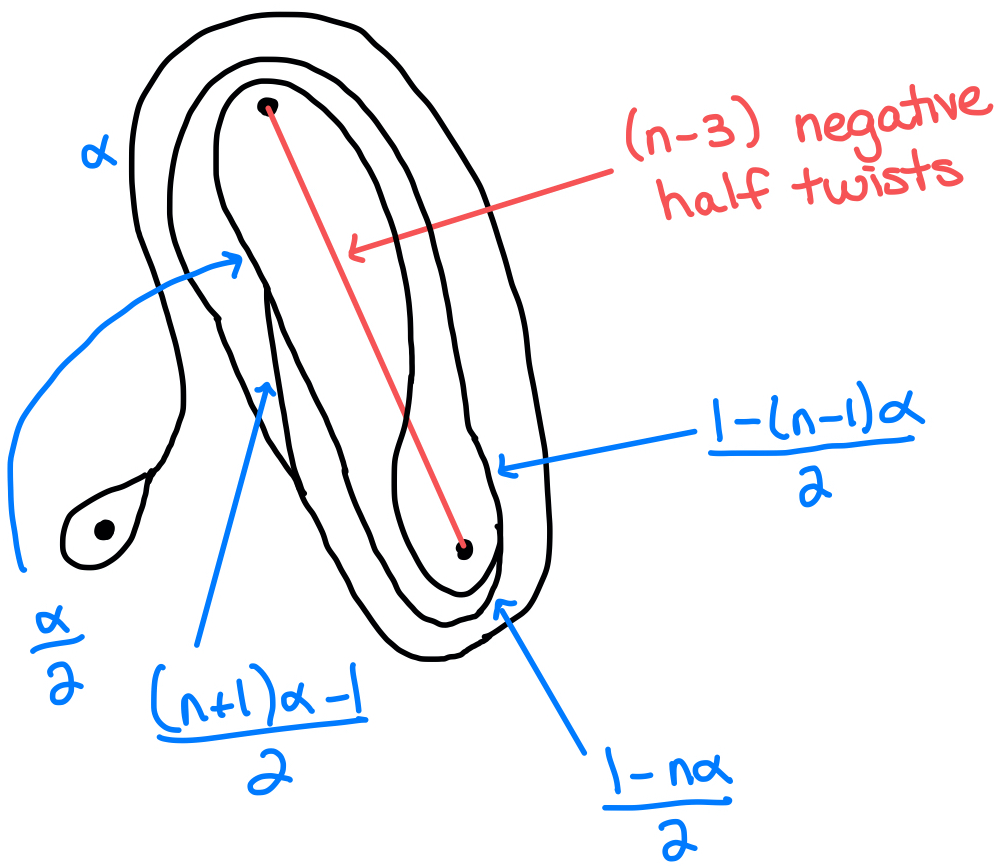} 
\end{equation*}
\begin{equation*}
\raisebox{40pt}{$\xrightarrow{\text{Absorb Half Twist} }$} 
\includegraphics[height=3.5cm]{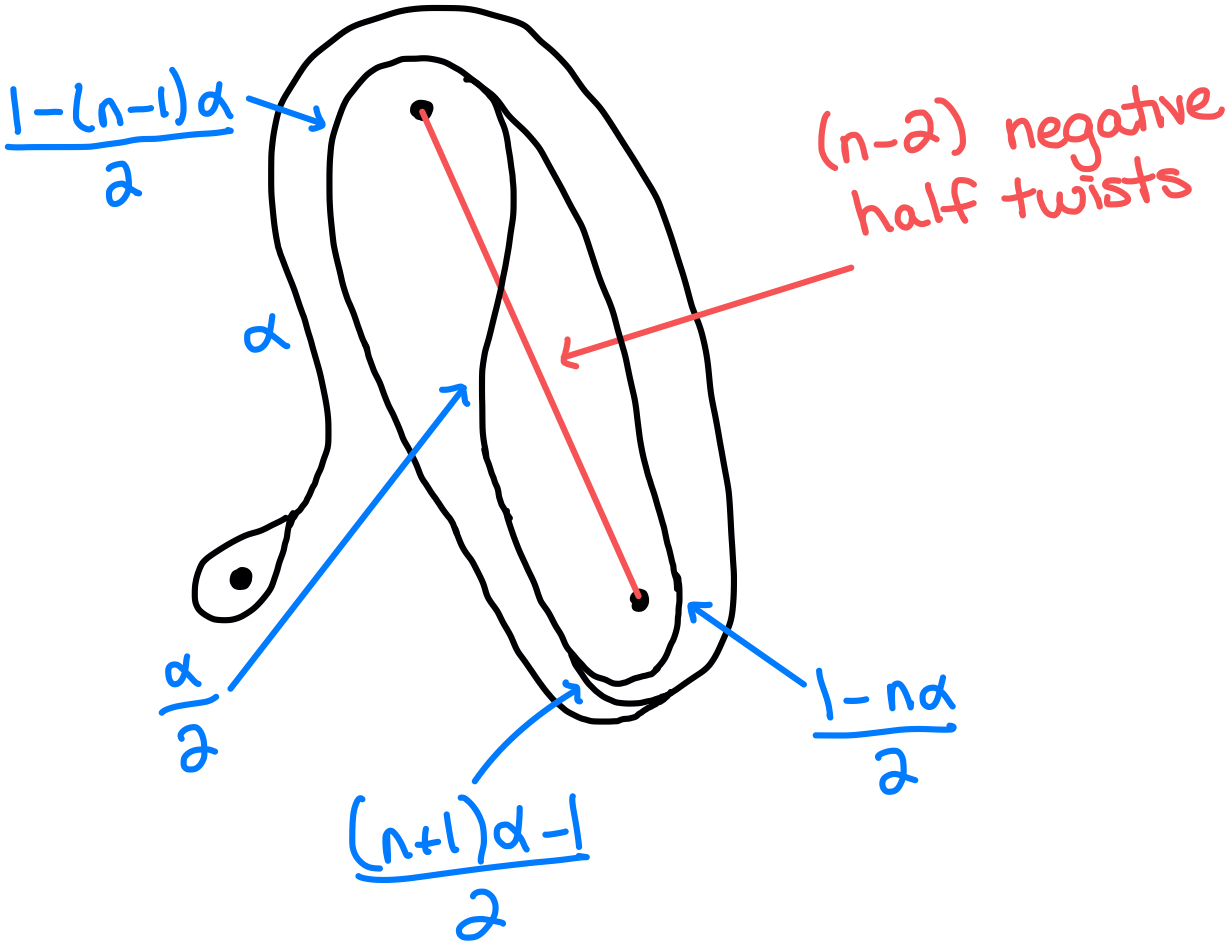} 
\raisebox{40pt}{$\xrightharpoonup{\text{Splitting } \alpha }\ \tau_{n+3}=$} \includegraphics[height=3.5cm]{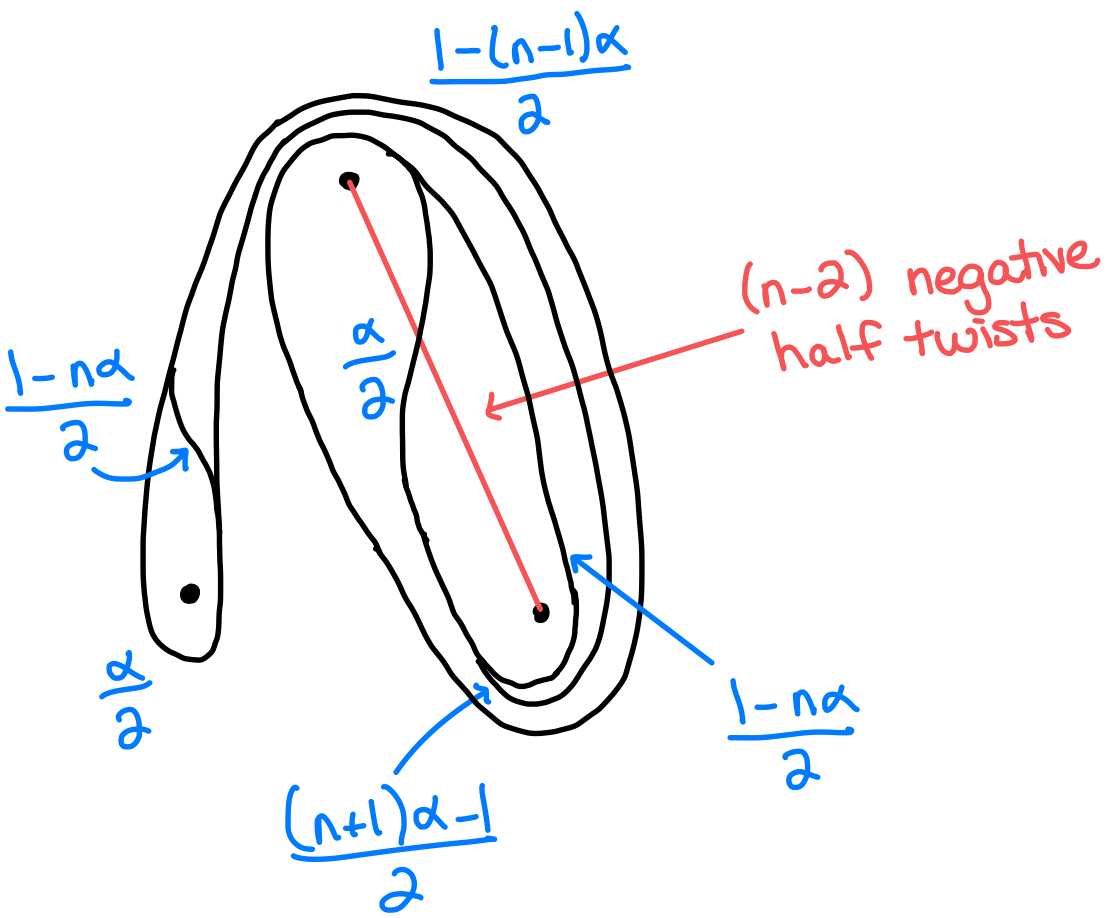}
\raisebox{40pt}{$\xrightarrow{\text{Isotopy}}$}
\end{equation*}
\begin{equation*}
\includegraphics[height=3.5cm]{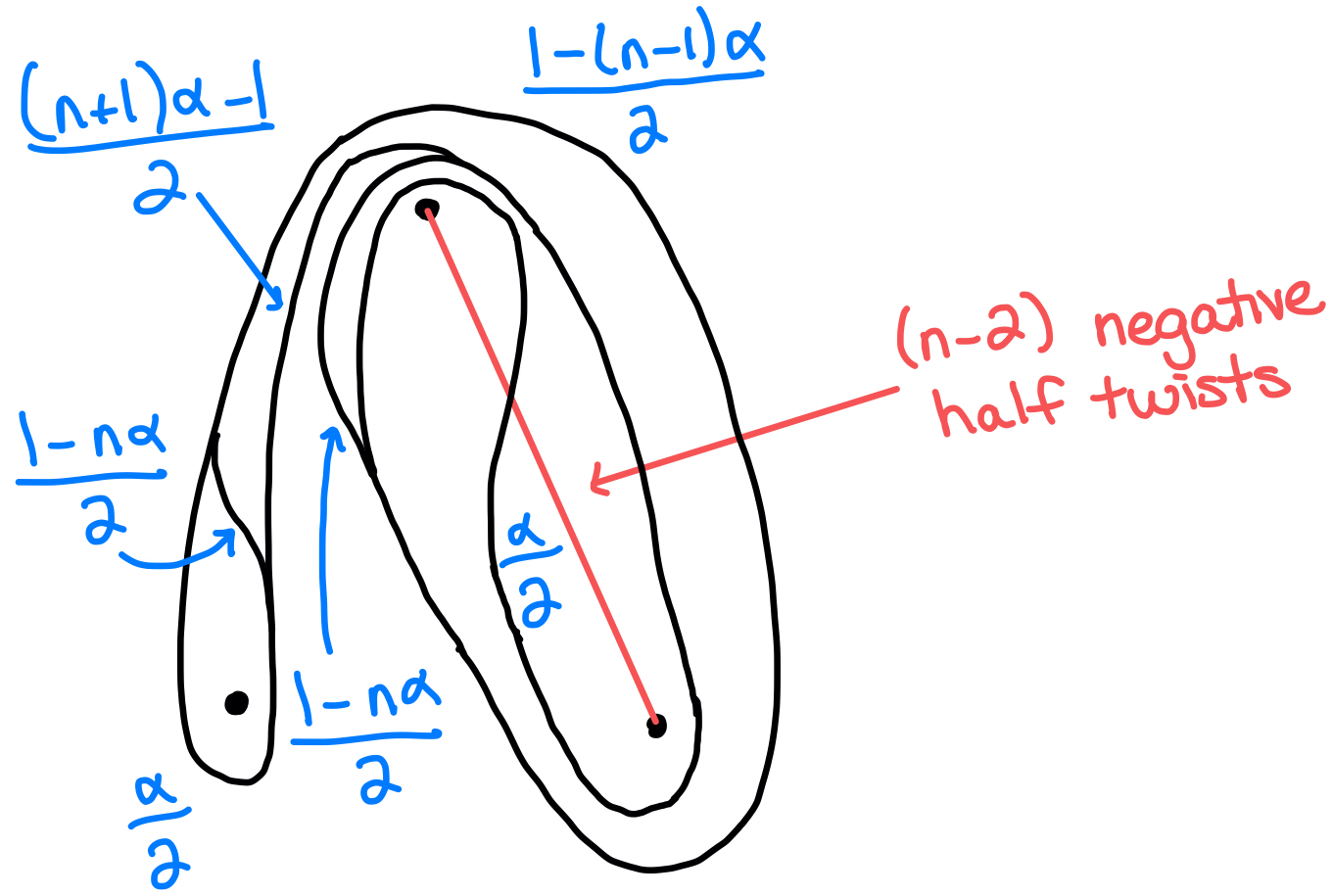} 
\raisebox{40pt}{$\xrightarrow{\text{Absorb Half Twist} }$}
\includegraphics[height=3.5cm]{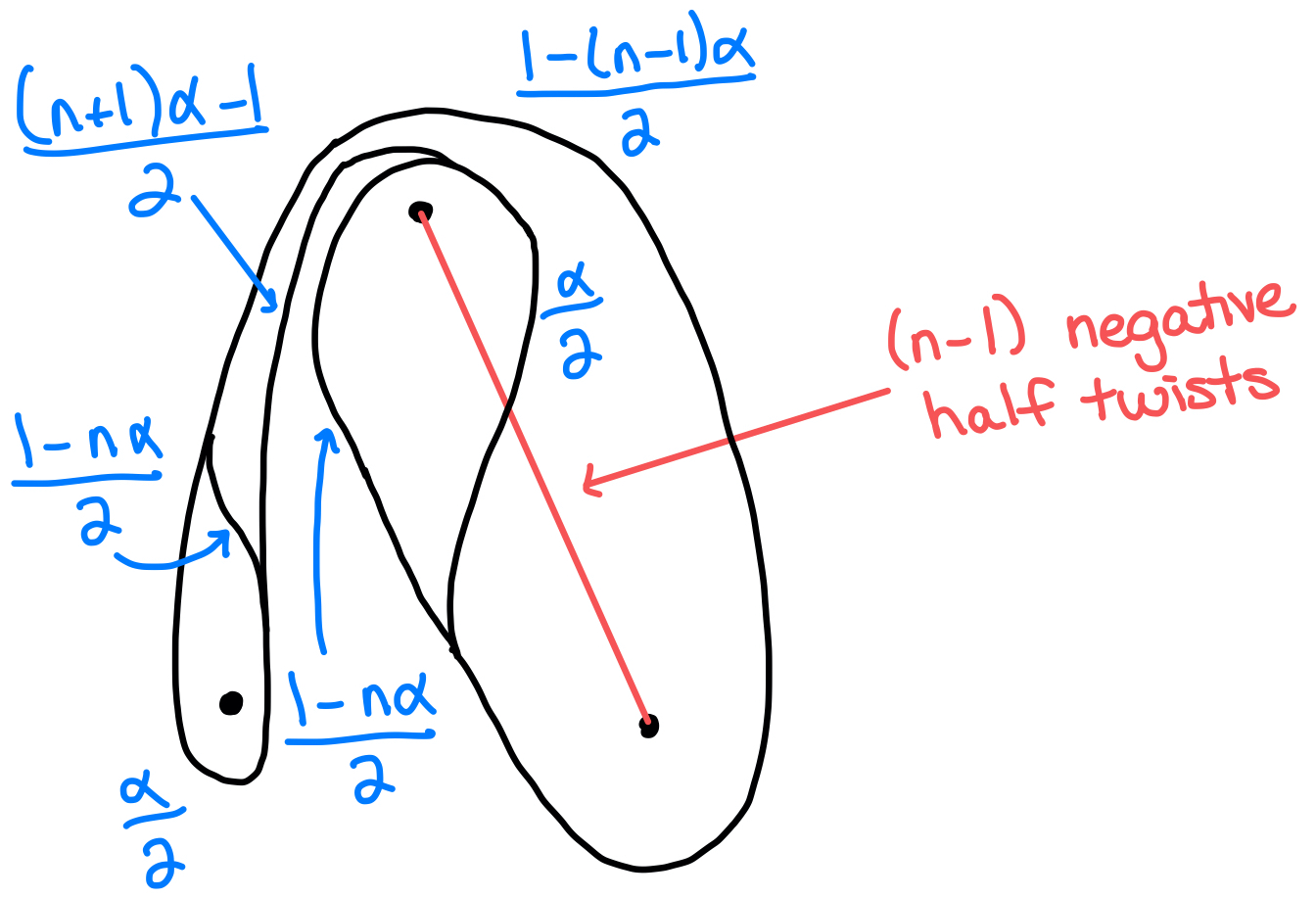}
\raisebox{40pt}{$\xrightharpoonup{\text{Splitting } \frac{1-(n-1)\alpha}{2}}$} 
\end{equation*}
\begin{equation*}
\raisebox{40pt}{$\tau_{n+4}=$}
\includegraphics[height=3.5cm]{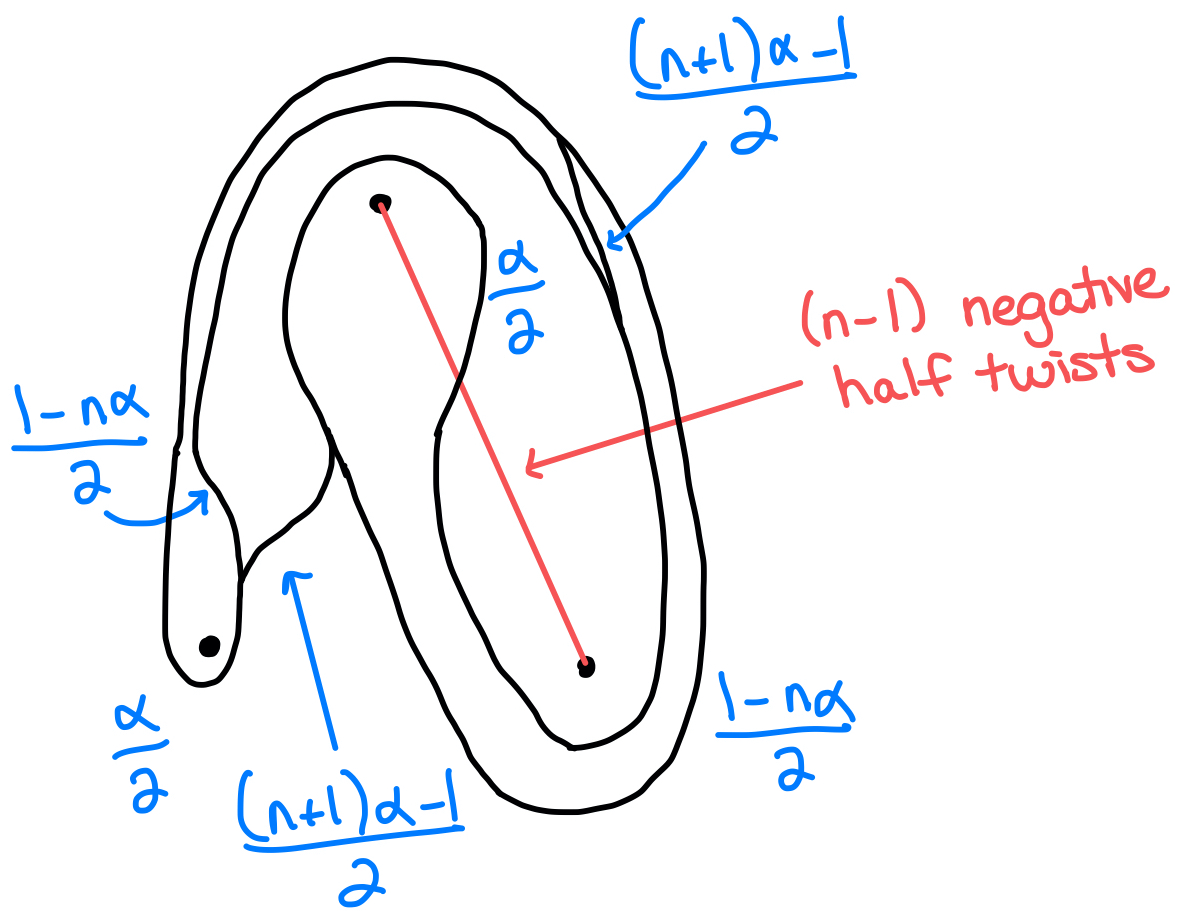}
\raisebox{40pt}{$\xrightarrow{\text{Absorb Half Twist} }$} \includegraphics[height=3.5cm]{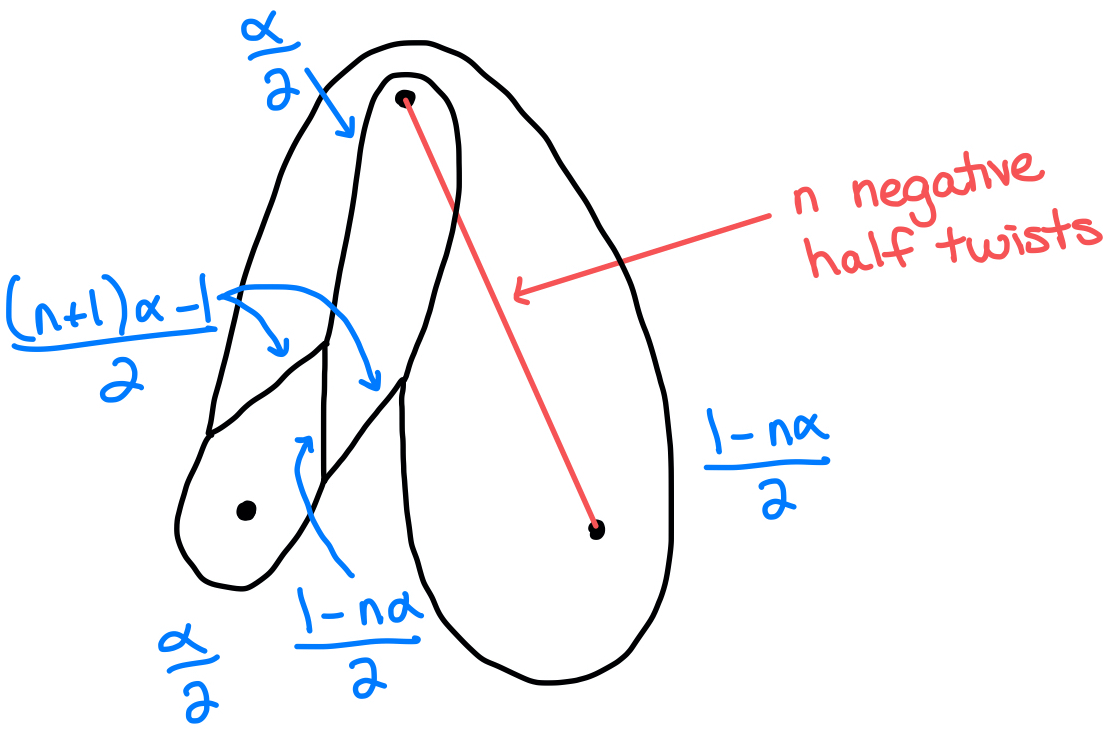}\raisebox{40pt}{$=T_n$ } 
\end{equation*}
\end{proof}

\section{System of triple-weight train tracks} 

The goal of this section is to study a closed system of triple-weight train tracks and prove Theorem~\ref{thm:closed-system}. 

\begin{theorem}\label{thm:closed-system}
Up to homeomorphism, in the maximal splitting sequence there are only four 
types of triple-weight train tracks for pseudo-Anosov 3-braids as depicted in Figure~\ref{figure-121'2'}, which we name Types I, I', II, II'. 

Moreover, they form a closed system under the maximal splitting operation (green arrows in the figure).  
\end{theorem}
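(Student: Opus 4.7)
The plan is to establish both assertions---that at most four topological types of triple-weight train tracks arise, and that they are closed under maximal splitting---by a finite case analysis on the four forms. By Proposition~\ref{prop:unique-alpha}, every pseudo-Anosov 3-braid admits an invariant train track of exactly one of the shapes $\W_\alpha, \W'_\alpha, \M_\alpha, \M'_\alpha$. Theorem~\ref{ThmTTT} then produces, from each of these starting tracks, a first triple-weight train track: $\tau_{n+4}$ for $\W_\alpha$, and the vertical reflection, horizontal reflection, and $180^\circ$-rotation of $\tau_{n+4}$ for $\W'_\alpha, \M'_\alpha, \M_\alpha$ respectively. After absorbing the $n$ half-twists into the framing via a homeomorphism, these four tracks are precisely the candidates for Types I, II, I$'$, II$'$ of Figure~\ref{figure-121'2'}. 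Identifying each starting-case output with the listed form then settles the claim that the first triple-weight train track belongs to the list.

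To prove closure, I would analyze maximal splitting on each of the four types in turn. Every triple-weight train track on the thrice-punctured disk has six edges and four vertices carrying weights in the pattern $(p,q,p+q)$, with $p+q$ the unique largest weight. Maximal splitting is performed simultaneously at every edge of weight $p+q$, after which the largest weight becomes $\max(p,q)$. Because MP-ratios are irrational by Proposition~\ref{prop:unique-alpha}, the borderline case $p=q$ never occurs, so for each type there are exactly two sub-cases, $p>q$ and $p<q$. In each sub-case I would draw the local splitting picture at each large-weight edge (as in the diagrams within the proof of Theorem~\ref{ThmTTT}), perform an ambient isotopy, and absorb any newly created half-twists into the framing. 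Reading off the resulting untwisted train track then produces a single arrow into one of the four types, matching the green arrows in Figure~\ref{figure-121'2'}. The total work is a table of at most $4\times 2 = 8$ pictures.

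The main obstacle is bookkeeping of the framings rather than conceptual. Two train tracks that differ by half-twists around punctures are the same topologically but look different on the page, so I need a systematic prescription (e.g. minimizing the number of half-twists, or placing them in a preferred region) for producing a canonical untwisted representative of each post-split track before comparing with the list. A second, minor subtlety is verifying that the transition arrows genuinely depend only on the sign of $p-q$ and not on the exact values of the weights; this is automatic since maximal splitting is defined purely in terms of which edges carry the maximal weight, and the sub-cases $p>q$ and $p<q$ are treated separately.

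Once the table of splittings is complete, both claims of the theorem follow at once: the first triple-weight train track produced by Theorem~\ref{ThmTTT} lies in the list, and the table shows that no splitting escapes the list. The green arrows in Figure~\ref{figure-121'2'} are then the complete record of the closed system.
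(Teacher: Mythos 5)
Your proposal is correct and follows essentially the same strategy as the paper's proof: identify the first triple-weight track via Theorem~\ref{ThmTTT} as one of the four types, then verify closure by a finite case analysis of maximal splitting on each type, splitting into two sub-cases governed by which of the two smaller weights is larger. The only economy the paper adds is to draw the splitting pictures only for Types I and II (Figures~\ref{fig:ABpart2} and~\ref{fig:ABpart1}) and deduce the Type I$'$ and II$'$ arrows by mirror symmetry, and to track the sub-cases algebraically through the 4-tuple formalism of Proposition~\ref{prop:algorithm of 4-tuple}, which is what yields Corollary~\ref{cor:rel_of_types} as a byproduct.
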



\begin{figure}[ht]
\includegraphics[height=10cm]{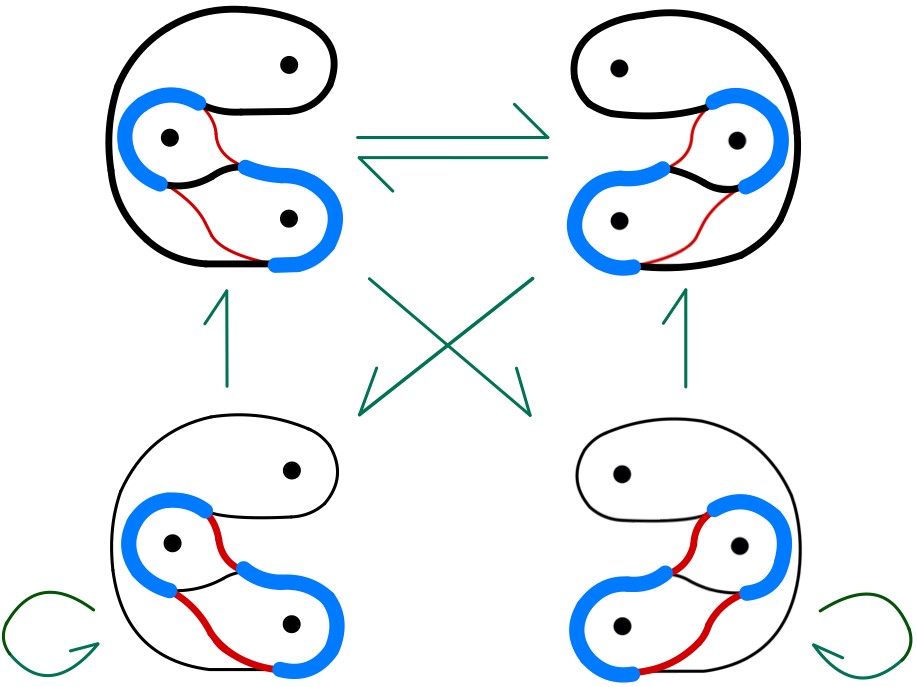}
\put(-40, 220){\fontsize{11}{15}Type I'}
\put(-40, 205){($\mp$)}
\put(-40, 80){\fontsize{11}{15}Type II'}
\put(-40, 65){($\pm$)}
\put(-380, 80){\fontsize{11}{15}Type II}
\put(-380, 65){($\mp$)}
\put(-380, 220){\fontsize{11}{15}Type I}
\put(-380, 205){($\pm$)}
\put(-282, 140){($\ast$)}
\put(-205, 240){($\ast$)} 
\put(-205, 205){($\ast$)} 
\put(-92, 140){($\ast$)} 
\put(-392, -5){($\ast\ast$)} 
\put(-10, -5){($\ast\ast$)} 
\put(-240, 150){($\ast\ast$)}
\put(-170, 150){($\ast\ast$)}
\caption{
(Theorem~\ref{thm:closed-system}) 
Type I, I', II, and II' triple-weight train tracks. 
Type $N$' is the mirror image of Type $N$.
The thickness of edges reflect the weights. 
Edges with the same color have the same weight. 
The blue edges have the highest weight for all the types.  
For Types I and I', the weight increases in the order of red $<$ black $<$ blue. 
For Types II and II', black $<$ red $<$ blue. 
The symbol ($\pm$) represents the sign (Definition~\ref{def of 4-tuple and ratio}) of the train track and is explained in Corollary~\ref{cor:Type and interval}. 
The arrow $\stackrel{(\ast)}{\rightharpoonup}$ (resp.~$\stackrel{(\ast\ast)}{\rightharpoonup}$) means the maximal splitting is Type~1 (resp.~Type~2) as defined in Proposition~\ref{prop:algorithm of 4-tuple}.}
\label{figure-121'2'}
\end{figure}

As a consequence of Theorem~\ref{ThmTTT},
the first triple-weight train track $\tau_{n+4}$ (Figure \ref{figure-T_n}) has Type I or II (resp, Type I' or II') if $\tau_0=\M_\alpha$ or $\W_\alpha$ (resp. $\tau_0=\M'_\alpha$ or $\W'_\alpha$). 
This is because Types I, $\dots,$ II' are considered up to homeomorphism, and $\M_\alpha$ is a $180^\circ$-rotation of $\W_\alpha$.

\begin{definition}\label{def:I-II-I'-II'-sequence}
It is convenient to define $\mathcal{T}_1:=\tau_{n+4}$ as it is the first triple-weight train track in our maximal splitting sequence and further define $\mathcal{T}_k:=\tau_{n+4+k-1}$ for $k \geq 1$. 
Thus, given a braid $\beta$, we obtain a maximal splitting sequence of triple-weight train tracks: 
$$
\T_1 \msp \T_2 \msp \T_3 \msp \cdots
$$
Looking at the types of the train tracks in $\mathcal T_1 \rightharpoonup \mathcal T_2 \rightharpoonup \mathcal T_3 \rightharpoonup \cdots$, we get a sequence in $\{$I, II, I', II'$\}$. We call it the {\em I-II-I'-II'-sequence} associated to the sequence $\mathcal T_1 \rightharpoonup \mathcal T_2 \rightharpoonup \mathcal T_3 \rightharpoonup \cdots$. 
\end{definition}

Examples of I-II-I'-II'-sequences are given in Example~\ref{examples beta and beta'}.

Before proving Theorem \ref{thm:closed-system}, we introduce the 4-tuple of a triple-weight train track that plays an important role in this paper. 
Then we proceed to prove Theorem \ref{thm:closed-system}. In the proof, we find a nice relation between the types of 4-tuples and the types of triple-weight train tracks, which is stated as Corollary~\ref{cor:rel_of_types}.

We have shown in Theorem~\ref{ThmTTT} that the three weights of the train track $\mathcal T_1$ are 
\begin{equation}\label{eq:3weights}
\frac{1-n\alpha}{2},\  \frac{-1+(n+1)\alpha}{2}, \ \frac{\alpha}{2}. 
\end{equation}
Therefore, by the nature of maximal splitting, every weight of a triple-weight train track $\mathcal T_k$ is in $\frac{1}{2}\Z + \frac{1}{2}\alpha \Z$.

\begin{definition}\label{def of 4-tuple and ratio}
Suppose that the smallest weight of a triple-weight train track $\T$ is given by $\frac{1}{2}x+\frac{1}{2}y\alpha$ and the second smallest weight is $\frac{1}{2}z+\frac{1}{2}w\alpha$. Thus $x+y \alpha < z + w\alpha.$
We call 
\begin{itemize}
\item
$(x, y\ ;\ z, w)$ the {\em 4-tuple} of the triple train track $\T$, 
\item
$\sgn(x)\in\{-1, 1\}$ the {\em sign} of the triple-weight train track $\T$ (we will show that $x\neq 0$), and we denote $\sgn(\T)=\sgn(x)$, 
\item
$\frac{x+y\alpha}{z+w\alpha}$ the {\em 4-tuple ratio} (or {\em 4-ratio} for short) of the triple-weight train track $\T$. 
\end{itemize}
\end{definition}

Due to the switch condition of train tracks, the largest weight of the triple-weight train track is the sum of the other two, that is $\frac{1}{2}(x+z) + \frac{1}{2}(y+w)\alpha$. 
Thus the 4-tuple $(x, y\ ;\ z, w)$ carries all the weight data of the triple-weight train track. 

By the definition of the 4-tuple, the following is immediate.

\begin{proposition}\label{prop:new}
If a triple-weight train track with 4-tuple $(x, y\ ;\ z, w)$ is
\begin{itemize}
\item
Type I or I', then the red-colored (thinnest) edges (in Figure~\ref{figure-121'2'}) have weight $\frac{1}{2}(x+y\alpha)$ and the black-colored (second thickest) edges have $\frac{1}{2}(z+w\alpha)$,
\item
Type II or II', then the black-colored (thinnest) edges have weight $\frac{1}{2}(x+y\alpha)$ and the red-colored (second thickest) edges have $\frac{1}{2}(z+w\alpha)$. 
\end{itemize}
\end{proposition}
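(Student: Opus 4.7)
The statement is flagged as immediate by the authors, and indeed the plan is simply to read off the assignment of weights to colored edges from the conventions established in Figure~\ref{figure-121'2'} and match them with the ordering built into Definition~\ref{def of 4-tuple and ratio}. There is no real obstacle: the bulk of the work has already been done in stating the color scheme (blue = largest, with red/black swapping roles between the two pairs of types) and in fixing the convention that $\frac{1}{2}(x+y\alpha)$ denotes the smallest weight and $\frac{1}{2}(z+w\alpha)$ the second smallest.

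Concretely, I would proceed in two steps. First, recall from the caption of Figure~\ref{figure-121'2'} that for Types I and I' the weights of the three edge-colors satisfy \[\text{red} \;<\; \text{black} \;<\; \text{blue},\] while for Types II and II' they satisfy \[\text{black} \;<\; \text{red} \;<\; \text{blue}.\] In particular, in both cases the blue edges carry the largest weight, consistent with the fact (from the switch condition and the displayed weights in Theorem~\ref{ThmTTT}) that the largest weight is the sum of the other two.

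Second, invoke Definition~\ref{def of 4-tuple and ratio}: by construction the smallest weight is $\frac{1}{2}(x+y\alpha)$ and the second smallest is $\frac{1}{2}(z+w\alpha)$. Matching these two weights with the two color orderings above, for Types I, I' the thinnest (smallest-weight) edges are red, carrying $\frac{1}{2}(x+y\alpha)$, while the second thickest are black, carrying $\frac{1}{2}(z+w\alpha)$. For Types II, II', the roles of red and black are reversed: black is thinnest with weight $\frac{1}{2}(x+y\alpha)$, and red is second thickest with weight $\frac{1}{2}(z+w\alpha)$. This gives exactly the two cases of the proposition. The only care needed is to verify that the edge-weight inequalities in Figure~\ref{figure-121'2'} are strict (so the smallest and second-smallest are well defined), which follows from the irrationality of $\alpha$ established in Proposition~\ref{prop:unique-alpha}.
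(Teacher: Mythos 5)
Your proposal is correct and matches the paper's own treatment: the paper declares the statement immediate from Definition~\ref{def of 4-tuple and ratio} and the color/weight conventions of Figure~\ref{figure-121'2'}, which is exactly what you spell out. The added remark that irrationality of $\alpha$ guarantees strict inequalities among the three weights is a sensible small completeness check, but it is the same route.
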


We introduce a 4-tuple sequence $\{(x_k, y_k\ ; \ z_k, w_k) \mid k=0, 1, \cdots \}$ that plays an important role to prove the main results.  

\begin{definition}\label{def:four-tuple}
Set 
\begin{equation}\label{eq:initial 4-tuple}
(x_0, y_0\ ;\ z_0, w_0):=(1, -n\ ;\  0,1).
\end{equation} 
For $k=1, 2, \dots$, let $(x_k, y_k\ ; \ z_k, w_k)$ denote the 4-tuple of $\mathcal{T}_k$. 
\end{definition}

Among the three weights of $\mathcal T_1$ in (\ref{eq:3weights}), 
$\frac{\alpha}{2}$ is the largest.  
Depending on the value of $\alpha$ we have either
$$\frac{1}{2}(-1+(n+1)\alpha) < \frac{1}{2}(1-n\alpha) \  
\mbox{ or } \ 
\frac{1}{2}(1-n\alpha)< \frac{1}{2}(-1+(n+1)\alpha).$$
In the former (resp. latter) case, 
$$(x_1, y_1\ ;\ z_1, w_1)=(-1, n+1 \ ; \ 1, -n) \ \mbox{ 
(resp. } (1, -n \ ;\  -1, n+1))
$$ 
and we say that the 4-tuple $(x_1, y_1\ ;\ z_1, w_1)$ is Type 1 (resp.~Type 2).  
Combine this with the fact $\frac{1}{n+1} < \alpha < \frac{1}{n}$, and we obtain that the 4-tuple of $\mathcal T_1$ is 
$$
(x_1, y_1\ ;\ z_1, w_1)=
\left\{
\begin{array}{llc}
(-1, n+1 \ ; \ 1, -n) & \mbox{ if } \frac{1}{n+1}<\alpha<\frac{1}{n+\frac{1}{2}} & (\text{Type 1})\\
(1, -n \ ;\  -1, n+1) & \mbox{ if } \frac{1}{n+\frac{1}{2}}<\alpha<\frac{1}{n} & (\text{Type 2}).
\end{array}
\right.
$$

\begin{prop}\label{prop:algorithm of 4-tuple}
In general, the 4-tuple $(x_{k+1}, y_{k+1} \ ; \ z_{k+1}, w_{k+1})$ can be computed  from $(x_k, y_k\ ;\ z_k, w_k)$ as follows. 
\begin{eqnarray*}
&&(x_{k+1}, y_{k+1} \ ; \ z_{k+1}, w_{k+1})\\ 
&&:=
\begin{cases} (z_k-x_k, w_k-y_k\ ; \ x_k, y_k) & \text{ if } (2y_k-w_k) \alpha > z_k -2 x_k \ (\text{Type 1}) \\ 
(x_k,y_k\ ; \ z_k-x_k, w_k-y_k) & \text{ if }  (2y_k-w_k) \alpha < z_k -2 x_k  \ (\text{Type 2})  
\end{cases}
\end{eqnarray*} 
For the first (resp.~second) case, we say that the 4-tuple $(x_{k+1}, y_{k+1} \ ; \ z_{k+1}, w_{k+1})$ has {\em Type~1} (resp. {\em Type 2}) and the maximal splitting $\mathcal T_k \rightharpoonup \mathcal T_{k+1}$ has Type 1 (resp.~Type 2), which we denote $\mathcal T_k \stackrel{(\ast)}{\rightharpoonup} \mathcal T_{k+1}$ (resp.~$\mathcal T_k \stackrel{(\ast\ast)}{\rightharpoonup} \mathcal T_{k+1}$). 
\end{prop}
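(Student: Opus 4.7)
The plan is to compute the single-step transformation $\T_k \msp \T_{k+1}$ directly from the local picture of maximal splitting in Figure~\ref{fig6}, working inside each of the four types set up in Theorem~\ref{thm:closed-system}. Throughout, abbreviate $X := x_k + y_k\alpha$ and $Z := z_k + w_k \alpha$, so by the definition of the 4-tuple the three doubled weights of $\T_k$ are $X < Z < X + Z$, with $\tfrac{1}{2}(X+Z)$ carried by the blue (largest) edges of $\T_k$ in Figure~\ref{figure-121'2'}.

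The geometric heart of the argument is to show that maximal splitting replaces each blue edge by an edge of doubled weight $Z - X$, while every red and black edge keeps its weight. At any endpoint of a blue edge the switch condition forces the two incident non-blue edges to have weights summing to $\tfrac{X+Z}{2}$, and the only combination drawn from $\{\tfrac{X}{2}, \tfrac{Z}{2}\}$ is $\tfrac{X}{2} + \tfrac{Z}{2}$. Hence the local picture around each blue edge matches Figure~\ref{fig6} with $a = \tfrac{X}{2}$, $c = \tfrac{Z}{2}$ (and symmetric choices at the other endpoint), so the post-splitting weight of the former blue edge is $c - a = \tfrac{Z - X}{2}$. A type-by-type reading of Figure~\ref{figure-121'2'} confirms that no other local configurations arise, so the conclusion is uniform across Types I, I$'$, II, II$'$.

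After the splitting, the triple of doubled weights of $\T_{k+1}$ is $\{X,\, Z - X,\, Z\}$, with $Z$ the new largest since $X + (Z-X) = Z$ and $X > 0$. The only remaining question is the order between $X$ and $Z - X$. If $Z - X < X$, then the new smallest doubled weight is $Z - X = (z_k - x_k) + (w_k - y_k)\alpha$ and the new second-smallest is $X = x_k + y_k \alpha$, which by Definition~\ref{def of 4-tuple and ratio} gives the 4-tuple $(z_k - x_k,\, w_k - y_k;\; x_k,\, y_k)$ of the Type~1 case. If instead $Z - X > X$, the roles of $X$ and $Z - X$ swap and we obtain $(x_k,\, y_k;\; z_k - x_k,\, w_k - y_k)$, the Type~2 case.

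It remains to rewrite the comparison: $Z - X < X$ is equivalent to $Z < 2X$, i.e.\ $z_k + w_k \alpha < 2 x_k + 2 y_k \alpha$, i.e.\ $(2 y_k - w_k)\alpha > z_k - 2 x_k$, matching the Type~1 inequality verbatim; the reverse inequality gives Type~2. The borderline $(2 y_k - w_k)\alpha = z_k - 2 x_k$ is impossible, since by irrationality of $\alpha$ (Proposition~\ref{prop:unique-alpha}) it would force both $2 y_k = w_k$ and $z_k = 2 x_k$, whence $Z = 2X$ would collapse the three new weights $\{X,\, Z - X,\, Z\} = \{X,\, X,\, 2X\}$ to two distinct values, contradicting Theorem~\ref{ThmTTT}. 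The main obstacle is thus the uniform identification of the new blue-edge weight as $\tfrac{Z - X}{2}$ across all four types; once that is read off from the diagrams supplied by Theorem~\ref{thm:closed-system}, the remainder is a line of algebra.
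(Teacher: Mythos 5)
Your argument is correct and follows essentially the same route as the paper's proof: after splitting along the blue edges of maximal doubled weight $X+Z$ (where $X=x_k+y_k\alpha$ and $Z=z_k+w_k\alpha$), the new doubled-weight triple is $\{X,\,Z-X,\,Z\}$, and comparing $X$ with $Z-X$ is, after rearrangement, exactly the stated inequality in $\alpha$. You add two details the paper leaves implicit --- a switch-condition check that the two non-blue edges at each blue endpoint are one red and one black, and the observation that the borderline equality $(2y_k-w_k)\alpha=z_k-2x_k$ is ruled out by the irrationality of $\alpha$ --- and your Type~1/Type~2 labeling matches the proposition statement.
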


\begin{proof}
Start with a triple-weight train track with $(x, y\ ;\ z, w)$.
See Figure~\ref{fig:max_split}.
\begin{figure}[h]
\includegraphics[height=2.5cm]{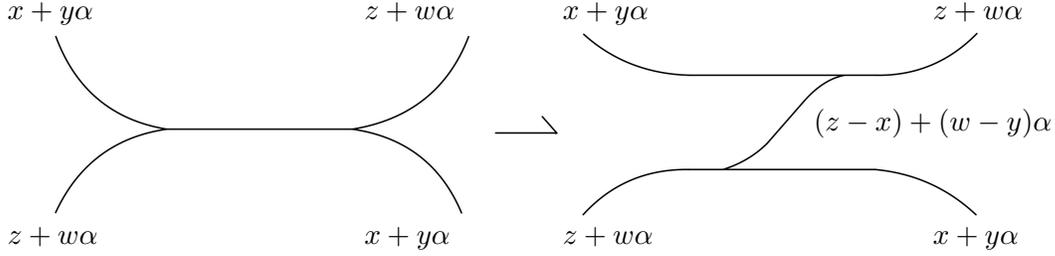}
\put(-20, 75){$z+w\alpha$}
\put(-20, -10){$x+y\alpha$}
\put(-160, -10){$z+w\alpha$}
\put(-160, 75){$x+y\alpha$}
\put(-65, 33){$(z-x)+(w-y)\alpha$}
\put(-370, 75){$x+y\alpha$}
\put(-235, 75){$z+w\alpha$}
\put(-370, -10){$z+w\alpha$}
\put(-235, -10){$x+y\alpha$}
\caption{Maximal splitting of triple-weight train track}
\label{fig:max_split}
\end{figure}
After a maximal splitting, the 4-tuple becomes $(x, y\ ; \ z-x, w-y)$ if 
$x+y\alpha < (z-x)+(w-y)\alpha$, equivalently 
$(2y-w)\alpha<z-2x$. 
We call it Type 1. 
Otherwise, we will have $(z-x, w-y\ ; \ x, y)$. 
We call it Type 2. 
\end{proof}


Now we prove Theorem~\ref{thm:closed-system}.

\begin{proof}[Proof of Theorem~\ref{thm:closed-system}]

Suppose that we have a Type I train track $\mathcal T$ with 4-tuple $(x, y\ ;\ z, w)$. 
The computation in Figure~\ref{fig:ABpart2} shows that the resulting train track $\mathcal T^{spl}$ of the maximal splitting has either Type I' or II'. 
\begin{figure}[h]
\hspace{-.5cm}
\includegraphics[height=3cm]{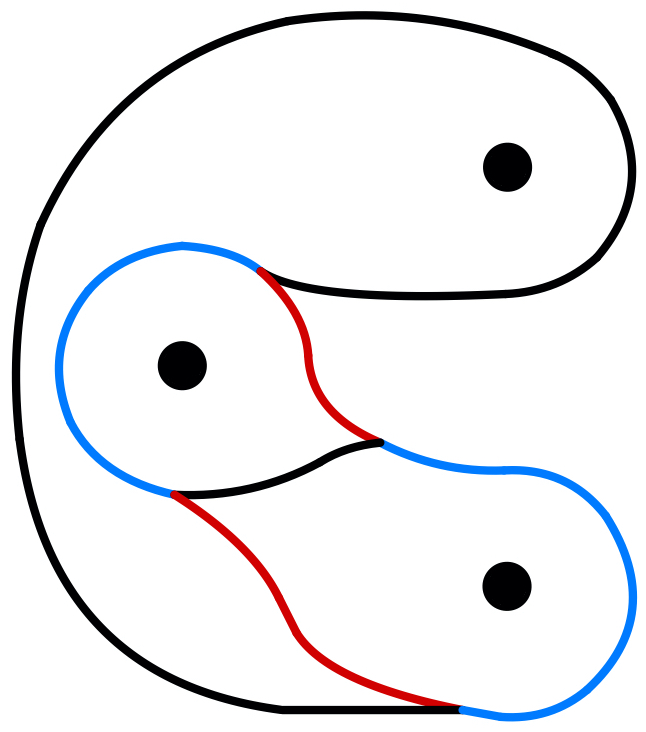}  
\raisebox{40pt}{$\xrightharpoonup[\text{Split}]{\text{Max}}$}  
\includegraphics[height=3cm]{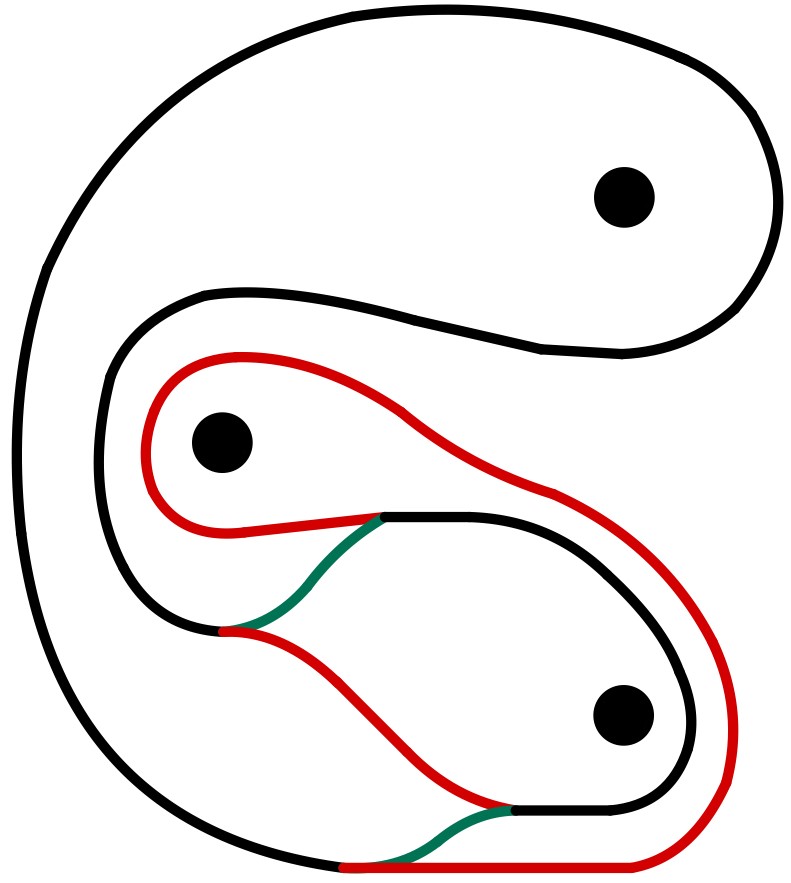}
\raisebox{40pt}{$\xrightarrow[\text{Isotopy}]{\text{Shift}}$}  
\scalebox{-1}[1]{\includegraphics[height=3cm]{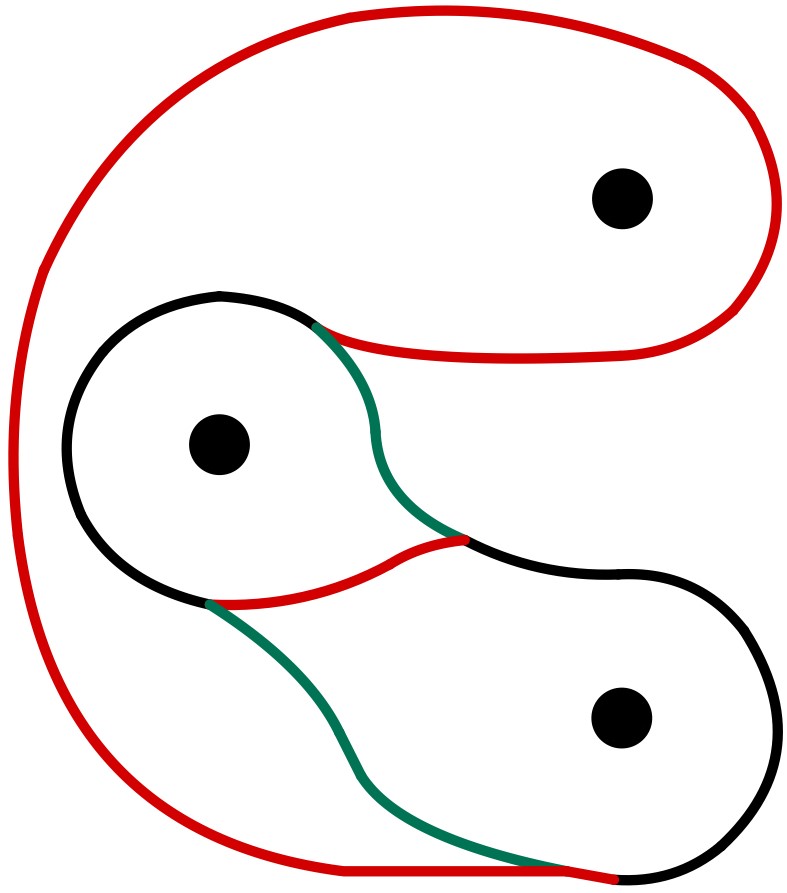}}
\put(-330, 15){Type I} 
\put(-320, 50){$\mathcal T$}
\put(-295, -15){\color{red}$x+y\alpha$}
\put(-255, -15){$< z+w\alpha$}
\put(-2, 15){Type I' or II'}
\put(8, 50){$\mathcal T^{spl}$}
\put(-75, -15){\color{red}$x+y\alpha$}
\put(-25, -15){\color{OliveGreen}$(z-x)+(w-y)\alpha$}
\caption{The Maximal splitting of Type I train track yields Type I' or II'.}
\label{fig:ABpart2}
\end{figure}

Looking into the weights carefully with Propositions~\ref{prop:algorithm of 4-tuple} and \ref{prop:new},  
we see that the 4-tuple of $\mathcal T^{spl}$ is Type 1 (resp. Type 2) if and only if    
$(z-x) + (w-y)\alpha < x +y\alpha$ (resp. $>$) 
if and only if the homeomorphism type of $\mathcal T^{spl}$ is Type I' (resp. Type II'). 
This yields the green arrow $\stackrel{(\ast)}{\rightharpoonup}$ (resp.~$\stackrel{(\ast\ast)}{\rightharpoonup}$) in Figure~\ref{figure-121'2'} coming out of Type I.

Suppose next that we have a Type II train track with 4-tuple $(x, y\ ;\ z, w)$. 
See Figure~\ref{fig:ABpart1}. 
After the maximal splitting we obtain a Type I (resp. Type II) train track if and only if 
$(z-x) + (w-y)\alpha < x +y\alpha$ (resp. $>$) 
if and only if 
the new 4-tuple (equivalently, the splitting type) is Type 1 (resp. Type 2). 
This yields the green arrow $\stackrel{(\ast)}{\rightharpoonup}$ (resp.~$\stackrel{(\ast\ast)}{\rightharpoonup}$) in Figure~\ref{figure-121'2'} coming out of Type II (resp.~coming back to itself). 
\begin{figure}[h]
\hspace{-.5cm}
\includegraphics[height=3cm]{A1.jpeg}  
\raisebox{40pt}{$\xrightharpoonup[\text{Split}]{\text{Max}}$}  
\includegraphics[height=3cm]{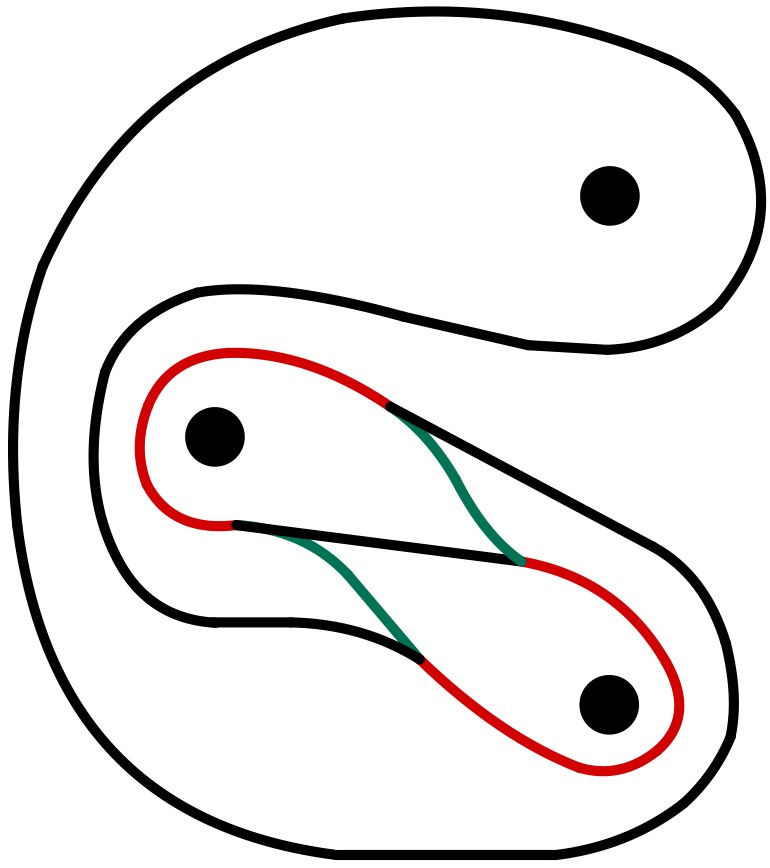}
\raisebox{40pt}{$\xrightarrow[\text{Isotopy}]{\text{Shift}}$}  
\includegraphics[height=3cm, angle=180,origin=c]{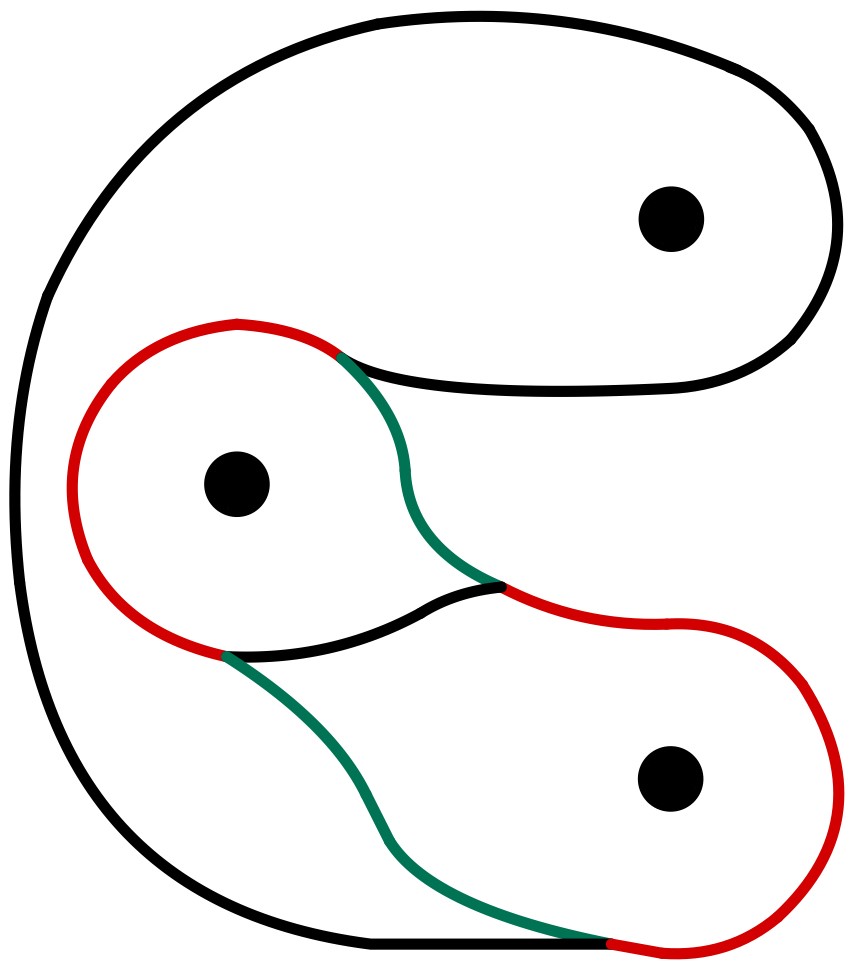}
\raisebox{40pt}{$\xrightarrow[\text{Isotopy}]{\text{Rotate}}$}  
\includegraphics[height=3cm]{A2Green.jpg}
\put(-390, -15){Type II}
\put(-405, -30){$x+y\alpha <$}
\put(-355, -30){\color{red}$z+w\alpha$}
\put(-90, -15){Type I or II \ \ $x+y\alpha$}
\put(-85, -30){\color{OliveGreen}$(z-x)+(w-y)\alpha$}
\caption{The Maximal splitting of Type II train track yields Type I or II.}
\label{fig:ABpart1}
\end{figure}

Recall Type I and Type I' are mirror to each other, and so are Type II and Type II'. This yields the rest of the maximal splitting arrows and completes the maximal splitting diagram in Figure~\ref{figure-121'2'}. 
\end{proof}

The above proof implies the following relation between topological types of triple-weight train tracks and types of 4-tuples. 

\begin{cor}\label{cor:rel_of_types}
Type I and Type I' triple train tracks are results of a Type 1 maximal splitting. 
Likewise, Type II and Type II' triple train tracks are results of a Type 2 maximal splitting. In other words, a triple-weight train track is 
\begin{itemize}
\item
Type I or I' if and only if its 4-tuple is Type 1, 
\item
Type II or II' if and only if its 4-tuple is Type 2. 
\end{itemize}
\end{cor}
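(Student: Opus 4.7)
The plan is to deduce the corollary directly from the case analysis carried out in the proof of Theorem~\ref{thm:closed-system}, with an additional base-case check at $\mathcal T_1$ so that the iff statement is anchored.

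First, I would verify the base case $k=1$ using Theorem~\ref{ThmTTT}. The train track $\mathcal T_1$ has red weight $\tfrac{-1+(n+1)\alpha}{2}$ and black weight $\tfrac{1-n\alpha}{2}$, and comparing these reduces to comparing $\alpha$ with $\tfrac{1}{n+1/2}$. In the range $\tfrac{1}{n+1} < \alpha < \tfrac{1}{n+1/2}$ the red edge is the thinnest, so $\mathcal T_1$ is topologically Type I by Proposition~\ref{prop:new}; and in this same range the 4-tuple $(x_1,y_1\,;\,z_1,w_1) = (-1, n+1\,;\,1,-n)$ is already declared Type 1 in the paragraph preceding Proposition~\ref{prop:algorithm of 4-tuple}. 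The complementary range $\tfrac{1}{n+1/2} < \alpha < \tfrac{1}{n}$ similarly yields Type II together with a Type 2 4-tuple. The base cases for Types I' and II' follow by the mirror-symmetric version of Theorem~\ref{ThmTTT} when $\tau_0 \in \{\M'_\alpha, \W'_\alpha\}$.

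Next, for the inductive step, I would observe that the two case analyses already present in the proof of Theorem~\ref{thm:closed-system} (Figures~\ref{fig:ABpart2} and~\ref{fig:ABpart1}) show precisely that the sub-case which produces a Type I or I' train track is exactly the sub-case in which the new 4-tuple satisfies $(z-x)+(w-y)\alpha < x+y\alpha$, which by Proposition~\ref{prop:algorithm of 4-tuple} is the defining condition of Type 1; the opposite inequality produces a Type II or II' train track and the defining condition of Type 2. Applying the mirror symmetry that swaps Type $N$ and Type $N'$ handles the remaining two starting types, completing the induction.

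There is no serious obstacle here: the only delicate point is matching the weight comparison $(z-x)+(w-y)\alpha \lessgtr x+y\alpha$ appearing in Figures~\ref{fig:ABpart2} and~\ref{fig:ABpart1} with the equivalent form $(2y-w)\alpha \gtrless z-2x$ used in Proposition~\ref{prop:algorithm of 4-tuple} to define Type 1 versus Type 2, which is immediate once one rearranges. So the corollary is really an unpacking of what has already been proved.
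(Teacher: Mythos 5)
Your proposal is correct and takes essentially the same approach as the paper, which presents the corollary as an immediate read-off from the case analysis in the proof of Theorem~\ref{thm:closed-system} (together with the equivalence of the inequality $(z-x)+(w-y)\alpha \lessgtr x+y\alpha$ with the condition $(2y-w)\alpha \gtrless z-2x$ in Proposition~\ref{prop:algorithm of 4-tuple}). Your explicit base-case check at $\mathcal T_1$ is a sensible piece of bookkeeping that the paper leaves implicit---it is already contained in the discussion of the weights of $\mathcal T_1$ preceding Proposition~\ref{prop:algorithm of 4-tuple} together with Proposition~\ref{prop:new}---but making it explicit does not change the substance of the argument.
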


\section{Triple-weight train track and Farey sequence}

In this section, we establish a relationship between triple-weight train tracks and Farey sequences that we use to prove our main results Theorem~\ref{thm:TFAE}.

Let $\alpha\in (\frac{1}{n+1}, \frac{1}{n}) \setminus \Q$ be the MP-ratio of some pseudo-Anosov 3-braid. 
We will define an infinite set of intervals and then construct a map $T$ which associates to each interval a 4-tuple of numbers. 
There is a unique sequence such that 
$I_{1,i_1} \supset I_{2,i_2} \supset I_{3,i_3} \supset \cdots \supset \bigcap_{k=1}^\infty I_{k,i_k}=\{\alpha\}$. 
Our goal of the section is to prove Theorem~\ref{thm:mapT}, where we relate the interval $I_{k,i_k}$ and the 4-tuple $(x_k, y_k \ ; \ z_k, w_k)$ under the map $T$. 

\subsection{Farey sequence}
First, we recall the {\em Farey sum}, $\boxplus$. 
Given two fractions, $\frac{s}{t}$ and $\frac{s'}{t'}$, we define:
\[
\frac{s}{t} \boxplus \frac{s'}{t'} := \frac{s+s'}{t+t'}.
\]
Notice that if $\frac{s}{t}<\frac{s'}{t'}$, then $\frac{s}{t} < \frac{s}{t} \boxplus \frac{s'}{t'}<\frac{s'}{t'}$.

For $k=0, 1, 2, \dots$, we inductively define an ordered set $L_k$ of fractions with cardinality $2^k+1$.  
The set $L_k$ is called the {\em Farey sequence} of order $k+1$.  
Define $$L_0=\{ a_{0,1}, a_{0,2}\}=\left\{ \frac{0}{1}, \frac{1}{1} \right\}$$ where we are deliberately not simplifying the fractions. 
Assume $L_k=\{a_{k,1}, a_{k,2}, \cdots, a_{k,2^k+1}\}$ where  
\[
\frac{0}{1}=a_{k,1}<a_{k,2}<\cdots<a_{k,2^k}<a_{k,2^k+1}=\frac{1}{1}.
\] 
Define 
\[
L_{k+1} =\{a_{k+1,1}, \cdots, a_{k+1,2^{k+1}+1} \}= L_k \cup \{a_{k,i} \boxplus a_{k,i+1} | i=1, 2, \cdots, 2^k \}
\]
as a set and then reorder them to have 
$\frac{0}{1}=a_{k+1,1}<a_{k+1,2}<\cdots < a_{k+1,2^{k+1}+1} = \frac{1}{1}.$

\begin{ex}
\begin{eqnarray*}
L_1 =  \left \{ a_{1,1}, a_{1,2}, a_{1,3} \right\} &=&   \left \{ \frac{0}{1}, \frac{1}{2}, \frac{1}{1}  \right \} \\ 
L_2 =  \left \{ a_{2,1}, \cdots, a_{2,5} \right\} &=&  \left \{ \frac{0}{1}, \frac{1}{3}, \frac{1}{2}, \frac{2}{3}, \frac{1}{1} \right \}\\
L_3 =  \left \{ a_{3,1}, \cdots, a_{3,9} \right\} &=&  \left \{ \frac{0}{1}, \frac{1}{4}, \frac{1}{3}, \frac{2}{5}, \frac{1}{2}, \frac{3}{5}, \frac{2}{3}, \frac{3}{4}, \frac{1}{1} \right \}
\end{eqnarray*}
\end{ex}

\begin{definition}
Let $n=\lfloor \frac{1}{\alpha} \rfloor$. For $k=0, 1, \cdots$ and $i=1, 2, \cdots, 2^k$, define the following open interval:
\[
I_{k,i}:=\left (\frac{1}{n+a_{k,i+1}}, \frac{1}{n+a_{k,i}}\right )
\]
\end{definition}
In particular, we have that $I_{0,1}=\left (\frac{1}{n+\frac{1}{1}}, \frac{1}{n+\frac{0}{1}} \right )=\left (\frac{1}{n+1}, \frac{1}{n} \right )$ and 
\begin{equation}\label{eq:intervals}
\bigsqcup_{i=1}^{2^k} I_{k,i} = \left [ \frac{1}{n+1}, \frac{1}{n} \right ] \setminus \left \{ \frac{1}{n+a} \mid a \in L_k\right \}.  
\end{equation}

\begin{ex} For $k=1$, 
\begin{eqnarray*} 
I_{1,2} \cup I_{1,1} &=&  \left(\frac{1}{n+\frac{1}{1}}, \frac{1}{n+\frac{1}{2}}\right ) \cup \left (\frac{1}{n+\frac{1}{2}}, \frac{1}{n+\frac{0}{1}}\right ) \\ 
&=& \left [ \frac{1}{n+1}, \frac{1}{n} \right ]  \setminus \left \{ \frac{1}{n+\frac{1}{1}},\ \frac{1}{n+\frac{1}{2}},\ \frac{1}{n+\frac{0}{1}} \right\} 
\end{eqnarray*}
For $k=2$, 
\begin{eqnarray*} 
&&I_{2,4} \cup I_{2,3} \cup I_{2,2} \cup I_{2,1} \\
&=& \left(\frac{1}{n+\frac{1}{1}}, \frac{1}{n+\frac{2}{3}}\right ) \cup 
\left(\frac{1}{n+\frac{2}{3}}, \frac{1}{n+\frac{1}{2}}\right ) \cup 
\left(\frac{1}{n+\frac{1}{2}}, \frac{1}{n+\frac{1}{3}}\right ) \cup 
\left(\frac{1}{n+\frac{1}{3}}, \frac{1}{n+\frac{0}{1}}\right ) \\
&=& 
\left [ \frac{1}{n+1}, \frac{1}{n} \right ]  \setminus \left\{ 
\frac{1}{n+\frac{1}{1}}, \frac{1}{n+\frac{2}{3}}, \frac{1}{n+\frac{1}{2}}, \frac{1}{n+\frac{1}{3}}, \frac{1}{n+\frac{0}{1}}
\right\}
\end{eqnarray*}
We observe that 
$I_{1,2}\setminus \left\{\frac{1}{n+\frac{2}{3}}\right\} =I_{2,4} \cup I_{2,3}$ and $I_{1,1} \setminus \left\{\frac{1}{n+\frac{1}{3}}\right\} =I_{2,2} \cup I_{2,1}$.
\end{ex}

In general, we have the following: 
\begin{lem}\label{L1}
For any $k=0, 1, \dots$ and $i=1, 2, \dots, 2^k$, the interval $I_{k,i}=\left (\frac{1}{n+a_{k,i+1}}, \frac{1}{n+a_{k,i}}\right )$ at the $k$th stage splits into two at the $(k+1)$st stage.
\begin{eqnarray}\label{eq:L1}
I_{k,i} \setminus \left\{  \frac{1}{n+a_{k+1,2i}}\right\}  
&=& 
I_{k+1,2i} \sqcup I_{k+1, 2i-1}\\
&=&
\left (\frac{1}{n+a_{k+1,2i+1}}, \frac{1}{n+a_{k+1,2i}}\right ) 
\sqcup
\left (\frac{1}{n+a_{k+1,2i}}, \frac{1}{n+a_{k+1,2i-1}}\right ) 
\nonumber
\end{eqnarray}
\end{lem}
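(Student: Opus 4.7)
The plan is to reduce the claim to a careful bookkeeping of how the indices in $L_k$ get relabeled when one passes to $L_{k+1}$, and then to translate this bookkeeping through the map $a \mapsto \tfrac{1}{n+a}$, which is order-reversing on $(0,1)$.

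First I would unpack the inductive definition of $L_{k+1}$. Between each consecutive pair $a_{k,i} < a_{k,i+1}$ in $L_k$, we insert the Farey sum $a_{k,i} \boxplus a_{k,i+1}$, and the elementary inequality $a_{k,i} < a_{k,i}\boxplus a_{k,i+1} < a_{k,i+1}$ ensures that no further reordering is needed inside this gap. Since exactly one new element is inserted between each of the $2^k$ consecutive pairs of $L_k$, the reindexing is forced: the $i$-th element of $L_k$ becomes the $(2i-1)$-th element of $L_{k+1}$, and the inserted Farey sum becomes the $(2i)$-th element. In symbols,
\[
a_{k+1,2i-1}=a_{k,i},\qquad a_{k+1,2i}=a_{k,i}\boxplus a_{k,i+1},\qquad a_{k+1,2i+1}=a_{k,i+1}.
\]

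Second, I would transport this via $a \mapsto \tfrac{1}{n+a}$. Because this map is strictly decreasing on $[0,1]$, the chain $a_{k+1,2i-1} < a_{k+1,2i} < a_{k+1,2i+1}$ yields
\[
\frac{1}{n+a_{k+1,2i+1}} \;<\; \frac{1}{n+a_{k+1,2i}} \;<\; \frac{1}{n+a_{k+1,2i-1}}.
\]
Substituting the relabeled endpoints into the definition of $I_{k,i}$ gives
\[
I_{k,i}=\left(\frac{1}{n+a_{k,i+1}},\,\frac{1}{n+a_{k,i}}\right)=\left(\frac{1}{n+a_{k+1,2i+1}},\,\frac{1}{n+a_{k+1,2i-1}}\right),
\]
and removing the interior point $\tfrac{1}{n+a_{k+1,2i}}$ cleanly splits the open interval into the two open pieces $I_{k+1,2i}$ and $I_{k+1,2i-1}$ written in the statement. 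This proves \eqref{eq:L1}.

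I do not foresee any genuine obstacle: the only place where one could slip is in keeping track of whether $i$ or $i+1$ lands on the $(2i-1)$ or $(2i+1)$ slot, and whether the order-reversal of $a\mapsto \tfrac{1}{n+a}$ forces $I_{k+1,2i}$ to be the left-hand piece or the right-hand piece. A single diagram showing $a_{k,i}<a_{k,i}\boxplus a_{k,i+1}<a_{k,i+1}$ on one axis and the reversed order of $\tfrac{1}{n+\cdot}$ on the other axis makes this transparent, so the proof is essentially a one-line induction once the reindexing $a_{k+1,2i-1}=a_{k,i}$, $a_{k+1,2i}=a_{k,i}\boxplus a_{k,i+1}$, $a_{k+1,2i+1}=a_{k,i+1}$ is recorded.
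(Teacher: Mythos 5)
Your proof is correct. The key observation is the reindexing $a_{k+1,2i-1}=a_{k,i}$, $a_{k+1,2i}=a_{k,i}\boxplus a_{k,i+1}$, $a_{k+1,2i+1}=a_{k,i+1}$, which follows from counting: before $a_{k,i}$ in $L_{k+1}$ sit the $i-1$ old elements $a_{k,1},\dots,a_{k,i-1}$ together with the $i-1$ newly inserted Farey sums, so $a_{k,i}$ occupies slot $2(i-1)+1=2i-1$, and the insertion right after it occupies slot $2i$. Transporting through the order-reversing map $a\mapsto\frac{1}{n+a}$ then gives exactly the stated splitting. The paper treats this lemma as immediate from Definition and does not spell out a proof, so your argument supplies precisely the bookkeeping that was left implicit, and it is the natural one.
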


A standard fact about Farey sequence is that 
$L_1 \subset L_2 \subset \cdots \subset \cup_{k=1}^\infty L_k = {\mathbb Q} \cap [0,1]$, which gives the following.

\begin{lem}\label{L2}
Let $\alpha \in \left ( \frac{1}{n+1}, \frac{1}{n} \right )\setminus \Q$. By Equation \ref{eq:intervals}, for every $k=0, 1, \cdots$, there exists an index $i_k \in \{ 1, 2,  3,\cdots, 2^k\}$ such that 
\begin{equation}\label{eq:alpha in interval}
\alpha \in I_{k,i_k}=\left ( \frac{1}{n+a_{k,i_k+1}}, \frac{1}{n+a_{k,i_k}} \right). 
\end{equation}
Moreover, by Lemma~\ref{L1} we obtain a sequence of nested intervals 
\begin{equation}\label{eq:nested_seq}
I_{0, 1} \supset I_{1,i_1} \supset I_{2,i_2} \supset I_{3,i_3} \supset \cdots \supset \bigcap_{k=1}^\infty I_{k,i_k}=\{\alpha\}. 
\end{equation}
with $i_k \in \{ 2 i_{k-1}, 2i_{k-1}-1\}$. 
In particular, the sequence $\{ i_k \}_{k=1}^\infty$ is an invariant of the irrational number $\frac{1}{n+1} < \alpha < \frac{1}{n}.$ 
\end{lem}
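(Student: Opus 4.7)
The plan is to assemble three ingredients already established in the text: the partition identity~(\ref{eq:intervals}), the splitting rule of Lemma~\ref{L1}, and the standard fact $\bigcup_{k=0}^\infty L_k = \Q \cap [0,1]$.

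First, I would establish the existence (and uniqueness) of $i_k$. The exceptional set $\{\tfrac{1}{n+a} \mid a \in L_k\}$ appearing in (\ref{eq:intervals}) consists entirely of rationals, whereas $\alpha$ is irrational; hence $\alpha$ avoids this set and lies in exactly one of the $2^k$ disjoint open intervals $I_{k,i}$. This uniquely defines $i_k \in \{1,\dots,2^k\}$.

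Second, to obtain the nested sequence (\ref{eq:nested_seq}) together with the rule $i_k \in \{2 i_{k-1}, 2 i_{k-1}-1\}$, I would apply Lemma~\ref{L1} to the interval $I_{k-1, i_{k-1}}$ containing $\alpha$. By irrationality, $\alpha$ differs from the removed midpoint $\tfrac{1}{n+a_{k, 2 i_{k-1}}}$, so $\alpha$ lies in exactly one of the two subintervals $I_{k, 2 i_{k-1}}$, $I_{k, 2 i_{k-1}-1}$. By the uniqueness from the first step, this subinterval must coincide with $I_{k, i_k}$, giving $I_{k-1, i_{k-1}} \supset I_{k, i_k}$ and $i_k \in \{2 i_{k-1},\, 2 i_{k-1}-1\}$, as desired.

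Third, for the intersection $\bigcap_{k} I_{k, i_k} = \{\alpha\}$, the plan is to show $|I_{k, i_k}| \to 0$. The endpoints of $I_{k, i_k}$ have the form $\tfrac{1}{n+a}$ with $a$ ranging over adjacent elements of $L_k$; since $\bigcup_k L_k = \Q \cap [0,1]$ is dense in $[0,1]$ and the map $a \mapsto \tfrac{1}{n+a}$ is a monotone homeomorphism of $[0,1]$ onto $[\tfrac{1}{n+1}, \tfrac{1}{n}]$, the collection of endpoints is dense in $[\tfrac{1}{n+1}, \tfrac{1}{n}]$. Equivalently, the elementary Farey identity $\tfrac{s'}{t'} - \tfrac{s}{t} = \tfrac{1}{t t'}$ for adjacent $\tfrac{s}{t}, \tfrac{s'}{t'} \in L_k$ forces the maximum gap in $L_k$ (and hence in the endpoint set) to tend to $0$. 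Since $\alpha$ belongs to every $I_{k, i_k}$ and the lengths shrink to $0$, the nested intersection is exactly $\{\alpha\}$. Finally, because $\{i_k\}_{k=1}^\infty$ is produced canonically from $\alpha$ and $n = \lfloor 1/\alpha \rfloor$, it depends only on $\alpha$, so it is an invariant. There is no real obstacle here: the argument is a streamlined application of irrationality plus Farey density, with no ingredients beyond what is already on the page.
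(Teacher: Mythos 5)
Your proof is correct and follows essentially the same route the paper takes: the paper does not write out a proof of this lemma but assembles it from the partition identity~(\ref{eq:intervals}), the splitting rule of Lemma~\ref{L1}, and the density fact $\bigcup_k L_k = \Q \cap [0,1]$ stated just above the lemma, exactly the three ingredients you use. Your treatment of the intersection $\bigcap_k I_{k,i_k} = \{\alpha\}$ is a reasonable fleshing-out of what the paper leaves implicit; the one minor remark is that invoking the maximum gap of $L_k$ is slightly more than necessary --- density of $\bigcup_k L_k$ alone already rules out the nested intersection containing an interval, since such an interval would have to avoid every Farey fraction.
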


\subsection{Computation of 4-tuples}

The goal of this subsection is to prove Theorem~\ref{thm:mapT} which allows us to compute the 4-tuple $(x_k, y_k\, ; \, z_k, w_k)$ of the triple-weight train track $\mathcal T_k$ inductively. 
We do this using $2\times 2$ matrices.

\begin{definition}\label{def:of functionT}
We inductively define a function $T$ which associates to each interval $I_{k,i}$ a $2\times2$ matrix.
Set
\begin{equation}\label{base case}
T(I_{0,1})= \begin{pmatrix} 1 & 0 \\ -n & 1 \\ \end{pmatrix}.
\end{equation}
Suppose that we have defined the function for $I_{k,i}$ as
\begin{equation}\label{eq:def of T}
T(I_{k,i})=\begin{pmatrix} x & z \\ y & w \\ \end{pmatrix}.
\end{equation}
Then we define the function for $I_{k+1, 2i-1}$ and $I_{k+1, 2i}$ as follows:
\begin{equation}\label{eqn:def of T}
T(I_{k+1,2i}) = 
\begin{cases} 
\begin{pmatrix} x & z \\ y & w \\ \end{pmatrix}
\begin{pmatrix} -1 & 1 \\ 1 & 0 \\ \end{pmatrix}=
\begin{pmatrix} z-x & x \\ w-y & y \\ \end{pmatrix} 
& \text{ if } x>0 \\ 
\begin{pmatrix} x & z \\ y & w \\ \end{pmatrix}
\begin{pmatrix} 1 & -1 \\ 0 & 1 \\ \end{pmatrix}=
\begin{pmatrix} x & z-x \\ y & w-y \\ \end{pmatrix} & \text{ if } x<0
\end{cases} 
\end{equation}
\begin{equation}\label{eq:def of T for k+1}
T(I_{k+1,2i-1}) = 
\begin{cases} 
\begin{pmatrix} x & z \\ y & w \\ \end{pmatrix}
\begin{pmatrix} 1 & -1 \\ 0 & 1 \\ \end{pmatrix}=
\begin{pmatrix} x & z-x \\ y & w-y \\ \end{pmatrix} & \text{ if } x>0 \\ 
\begin{pmatrix} x & z \\ y & w \\ \end{pmatrix}
\begin{pmatrix} -1 & 1 \\ 1 & 0 \\ \end{pmatrix}=
\begin{pmatrix} z-x & x \\ w-y & y \\ \end{pmatrix} & \text{ if } x<0
\end{cases}
\end{equation}
\end{definition}

The next lemma shows that 
elements of the matrix $T(I_{k,i})$ are $\mathbb Z$-coefficient polynomials in $n$ of degree one and contains all the information of the intervals $I_{k, i}$. 

\begin{lem}\label{lemABCD}
For each $k, i$, there exist 
$\epsilon=\epsilon_{k,i}\in\{-1, 1\}$,  
$a=a_{k,i}\in \mathbb Z_{>0}$ and 
$b=b_{k,i}, c=c_{k,i}, d=d_{k,i} \in \mathbb Z_{\geq 0}$ 
such that 
\begin{equation}\label{eq:def of abcd}
T(I_{k,i})=\epsilon \begin{pmatrix} a & -b\\ -an-c  & bn+d \\ \end{pmatrix}.
\end{equation}
Moreover, the non-negative integers $a, b, c, d$ satisfy; 
\begin{equation}\label{eq:I}
I_{k,i}= \left\{
\begin{array}{lcc}
\left (\frac{1}{n+\frac{c+d}{a+b}}, \frac{1}{n+\frac{c}{a}}\right) &\mbox{\rm  if } & \epsilon = 1 \\
\left (\frac{1}{n+\frac{c}{a}}, \frac{1}{n+\frac{c+d}{a+b}}\right)
& \mbox{\rm if } & \epsilon = -1
\end{array}
\right.
\end{equation}
and
\begin{equation}\label{eq:II}
\det T(I_{k,i})=
ad-bc =  \sgn(\epsilon).
\end{equation}
We call $\epsilon_{k, i}$ the {\em sign} of the interval $I_{k, i}$. 
\end{lem}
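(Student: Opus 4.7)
The proof proceeds by induction on $k$. In the base case $k=0, i=1$, formula (\ref{base case}) exhibits $T(I_{0,1})$ already in the shape (\ref{eq:def of abcd}) with $(\epsilon, a, b, c, d) = (1, 1, 0, 0, 1)$; the interval identity (\ref{eq:I}) reduces to $I_{0,1} = (\frac{1}{n+1}, \frac{1}{n})$ since $\tfrac{c+d}{a+b} = 1$ and $\tfrac{c}{a} = 0$, and the determinant equals $1 = \sgn(\epsilon)$.

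For the inductive step, I assume the conclusion for $I_{k,i}$ with parameters $(\epsilon, a, b, c, d)$. The crucial observation is that the branch selected in the recursive definitions (\ref{eqn:def of T}) and (\ref{eq:def of T for k+1}) is determined by $\sgn(x)$, where $x$ is the $(1,1)$-entry of $T(I_{k,i})$; since $x = \epsilon a$ with $a > 0$, this sign is precisely $\sgn(\epsilon)$. There are therefore four subcases, indexed by $(\epsilon, \mathrm{child}) \in \{\pm 1\} \times \{2i, 2i-1\}$. In each, one carries out the $2 \times 2$ matrix product, then factors out a sign $\epsilon' \in \{\pm 1\}$ so that the $(1,1)$-entry becomes positive, reading off the new parameters $(a', b', c', d')$ directly.

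I illustrate one representative subcase; the other three are structurally identical. For $\epsilon = +1$ and child $2i$, the product yields
\[
T(I_{k+1, 2i}) = -\begin{pmatrix} a+b & -a \\ -(a+b)n - (c+d) & an + c \end{pmatrix},
\]
so $(\epsilon', a', b', c', d') = (-1,\, a+b,\, a,\, c+d,\, c)$. The positivity $a' = a+b > 0$ and the non-negativity of $b', c', d'$ follow from those of the previous parameters. To verify (\ref{eq:I}), I use the Farey-sum identity $a_{k+1,2i} = a_{k,i} \boxplus a_{k,i+1}$ built into the construction of $L_{k+1}$, together with the inductive description $a_{k,i} = c/a$ and $a_{k,i+1} = (c+d)/(a+b)$, to compute $a_{k+1,2i} = (2c+d)/(2a+b) = (c'+d')/(a'+b')$ and $a_{k+1, 2i+1} = a_{k,i+1} = c'/a'$; this matches the $\epsilon' = -1$ branch of (\ref{eq:I}) via Lemma~\ref{L1}. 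Finally, $\det T(I_{k+1,2i}) = (a+b)c - a(c+d) = bc - ad = -1 = \sgn(\epsilon')$, confirming (\ref{eq:II}).

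The main obstacle is the bookkeeping of the four subcases: one must correctly pair each matrix branch in the definition of $T$ with the correct child index across both signs of $\epsilon$, and confirm in each case that the sign $\epsilon'$ extracted by the factorization is consistent with the side of the inserted Farey midpoint on which the new subinterval lies. Once one subcase is handled carefully, the remaining three are mechanical symmetries, with the only substantive mathematical input being the same Farey-sum identity $a_{k+1,2i} = a_{k,i} \boxplus a_{k,i+1}$.
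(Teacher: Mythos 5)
Your proof is correct and follows essentially the same inductive route as the paper: verify the base case, compute the matrix products in the inductive step, match the resulting $(a',b',c',d')$ against Lemma~\ref{L1} and the Farey-sum identity, and confirm the determinant. The only minor difference is in establishing (\ref{eq:II}): the paper observes that $|\det T(I_{k,i})|=1$ holds automatically (since $T$ is built from determinant-$\pm1$ matrices) and then reads off the sign of $ad-bc$ directly from the interval ordering in (\ref{eq:I}), while you recompute the determinant inductively in each subcase; both are valid and essentially equivalent.
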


\begin{proof}
(Induction on $k$)
When $k=0$, the initial condition (\ref{base case}) gives $\epsilon=1$, $a=1$, $b=0$, $c=0$, $d=1$, and $I_{0,1}=\left(\frac{1}{n+\frac{1}{1}}, \frac{1}{n+\frac{0}{1}}\right)$. Thus, all the assertions hold.

We assume that the assertions hold for $I_{k, i}$ and will show that assertions (\ref{eq:def of abcd}) and (\ref{eq:I}) hold for both $I_{k+1, 2i}$ and $I_{k+1, 2i-1}$. 

When $\epsilon_{k,i}=1$, by the induction hypothesis we have $I_{k,i}=\left(\frac{1}{n+\frac{c+d}{a+b}}, \frac{1}{n+\frac{c}{a}}\right)$. 
By (\ref{eqn:def of T}) and (\ref{eq:def of T for k+1}), we have
$$
T(I_{k+1, 2i})\stackrel{(\ref{eqn:def of T})}{=} - 
\begin{pmatrix} 
a+b & -a \\ -(a+b)n- (c+d) & an+c 
\end{pmatrix}=:- 
\begin{pmatrix} 
a' & -b'\\ -a'n-c'  & b'n+d'
\end{pmatrix}
$$
and 
$$
T(I_{k+1, 2i-1}) \stackrel{(\ref{eq:def of T for k+1})}{=} 
\begin{pmatrix} 
a & -(a+b) \\ -an- c & (a+b)n+c+d 
\end{pmatrix}=: 
\begin{pmatrix} 
a'' & -b''\\ -a''n-c''  & b''n+d''
\end{pmatrix}.
$$
We see that $a', a'' \in \mathbb Z_{>0}$ and $b', b'', c', c'', d, d'' \in \mathbb Z_{\geq 0}$. 
Thus (\ref{eq:def of abcd}) is satisfied for $k+1$ where $\epsilon_{k+1, 2i}=-1$ and $\epsilon_{k+1, 2i-1}=1.$
Next we check (\ref{eq:I}) for $k+1$.
By Lemma~\ref{L1} and the induction hypothesis (\ref{eq:I}),  
$$
I_{k+1, 2i}=
\left(\frac{1}{n+\frac{c+d}{a+b}}, \frac{1}{n+\frac{c+d}{a+b} \boxplus \frac{c}{a}}\right)=
\left(\frac{1}{n+\frac{c+d}{a+b}}, \frac{1}{n+\frac{2c+d}{2a+b}}\right)=\left(\frac{1}{n+\frac{c'}{a'}}, \frac{1}{n+\frac{c'+d'}{a'+b'}}\right)$$ 
and
$$I_{k+1, 2i-1}= 
\left(\frac{1}{n+\frac{c+d}{a+b} \boxplus \frac{c}{a}}, \frac{1}{n+\frac{c}{a}}\right)=
\left(\frac{1}{n+\frac{2c+d}{2a+b}}, \frac{1}{n+\frac{c}{a}}\right)=
\left(\frac{1}{n+\frac{c''+d''}{a''+b''}}, \frac{1}{n+\frac{c''}{a''}}\right).$$ 
Thus the assertion (\ref{eq:I}) is true for $I_{k+1, 2i}$ and $I_{k+1, 2i-1}$. 

When $\epsilon_{k, i}=-1$, a similar argument works. Thus, (\ref{eq:I}) is proved for $k+1$.  

Finally, for the last assertion (\ref{eq:II}) we observe that $|\det T(I_{k,i})|=|ad-bc|=1$ by the inductive definition of the function $T$. 
When $\epsilon=1$ by (\ref{eq:I}) we get 
$\frac{c}{a} < \frac{c+d}{a+b}$, which gives $0< ad-bc$ and thus $ad-bc=1$. 
Similarly when $\epsilon=-1$ we get $ad-bc=-1$. 
\end{proof}

Recall Lemma \ref{L1} which states that the interval $I_{k, i}$ splits into $I_{k+1, 2i}$ and $I_{k+1, 2i-1}$.

\begin{lem}\label{lem:sign of k+1}
The signs of the intervals  $I_{k+1, 2i}$ and $I_{k+1, 2i-1}$ are 
$$\epsilon_{k+1, 2i}=-1\mbox{ and } \epsilon_{k+1, 2i-1}=1.$$
\end{lem}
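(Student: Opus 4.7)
The plan is to observe that this lemma is really just a matter of unpacking the sign data left implicit in the proof of Lemma~\ref{lemABCD}. Recall that $\epsilon_{k,i}$ was defined as the overall sign factored out so that the matrix
\[
\epsilon_{k,i}^{-1}\, T(I_{k,i}) = \begin{pmatrix} a & -b \\ -an-c & bn+d \end{pmatrix}
\]
has a positive $(1,1)$-entry $a > 0$. Thus $\epsilon_{k+1,j}$ is determined entirely by the sign of the $(1,1)$-entry of $T(I_{k+1,j})$, and so the lemma reduces to checking this sign in all four cases ($\epsilon_{k,i} = \pm 1$, combined with $j = 2i$ or $j = 2i-1$).

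First I would handle $\epsilon_{k,i} = 1$. Here the $(1,1)$-entry of $T(I_{k,i})$ is $a > 0$, so Definition~\ref{def:of functionT} dictates that we multiply on the right by $\begin{pmatrix} -1 & 1 \\ 1 & 0 \end{pmatrix}$ to form $T(I_{k+1,2i})$ and by $\begin{pmatrix} 1 & -1 \\ 0 & 1 \end{pmatrix}$ to form $T(I_{k+1,2i-1})$. A direct calculation (already done inside the proof of Lemma~\ref{lemABCD}) shows that the $(1,1)$-entry of $T(I_{k+1,2i})$ is $-a-b < 0$, forcing $\epsilon_{k+1,2i} = -1$, while the $(1,1)$-entry of $T(I_{k+1,2i-1})$ is $a > 0$, giving $\epsilon_{k+1,2i-1} = 1$.

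Next I would handle $\epsilon_{k,i} = -1$. Here the $(1,1)$-entry of $T(I_{k,i})$ is $-a < 0$, so the two multiplying matrices swap roles: $\begin{pmatrix} 1 & -1 \\ 0 & 1 \end{pmatrix}$ is used for $T(I_{k+1,2i})$ and $\begin{pmatrix} -1 & 1 \\ 1 & 0 \end{pmatrix}$ for $T(I_{k+1,2i-1})$. Multiplying out, the $(1,1)$-entry of $T(I_{k+1,2i})$ remains $-a < 0$, giving $\epsilon_{k+1,2i} = -1$, and the $(1,1)$-entry of $T(I_{k+1,2i-1})$ becomes $a+b > 0$, giving $\epsilon_{k+1,2i-1} = 1$.

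The entire argument is routine matrix bookkeeping; no obstacle is expected beyond the clerical hazard of losing track of the overall sign when factoring out $\epsilon_{k+1,j}$. The minor conceptual point worth highlighting is why the swap in the two multiplying matrices between the $\epsilon = +1$ and $\epsilon = -1$ cases still produces the \emph{same} sign pattern $(\epsilon_{k+1,2i}, \epsilon_{k+1,2i-1}) = (-1, +1)$: the swap in multipliers exactly compensates for the change of sign of $x$, so the position of the resulting negative $(1,1)$-entry (namely at index $2i$) is preserved.
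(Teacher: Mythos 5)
Your proof is correct and follows essentially the same route as the paper: both arguments reduce the claim to tracking the sign of the $(1,1)$-entry of $T(I_{k+1,j})$ via the defining relations $x = \epsilon a$ (hence $\sgn(x) = \epsilon$) and $z - x = -\epsilon(a+b)$ (hence $\sgn(z-x) = -\epsilon$), then observing that Definition~\ref{def:of functionT} selects exactly the right multiplier so that $T(I_{k+1,2i-1})$ always inherits a positive $(1,1)$-entry and $T(I_{k+1,2i})$ a negative one. The only cosmetic difference is that you write out all four sign cases explicitly, whereas the paper collapses the two $\epsilon$-cases using the observation $\sgn(z-x) = -\sgn(x)$.
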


\begin{proof}
By (\ref{eq:def of T}) and (\ref{eq:def of abcd}) we have
$x=\epsilon a$ and $z=-\epsilon b$. 
Knowing that $a> 0$ we get $\sgn(x)=\epsilon$.
We also see $\sgn(z-x)=\sgn(-\epsilon(b+a))=-\epsilon=-\sgn(x)$.
The second equation follows since $a>0$ and $b\geq 0$. 
The definition of $T$ in (\ref{eq:def of T for k+1}) gives 
$$
\epsilon_{k+1, 2i-1}=\left\{
\begin{array}{lcc}
\sgn{(x)}=1 & \mbox{ if } & x>0 \\
\sgn{(z-x)}=1 & \mbox{ if } & x<0.
\end{array}\right.
$$
A similar argument with (\ref{eqn:def of T}) yields $\epsilon_{k+1, 2i}=-1$. 
\end{proof}

Now, we can finally relate this new function $T$ and the 4-tuple $(x_k, y_k\, ; \, z_k, w_k)$ of the train track $\mathcal T_k$ defined in Definition~\ref{def:four-tuple}. 

\begin{thm}\label{thm:mapT}
For every $k=0, 1, \cdots,$ the 4-tuple $(x_k, y_k\, ; \, z_k, w_k)$ of the train track $\mathcal T_k$ satisfies 
\begin{equation}\label{eq of T(I)}
\begin{pmatrix} x_k & z_k \\ y_k & w_k \\ \end{pmatrix}
=
T(I_{k,i_k})
\stackrel{{\rm Lem}\ref{lemABCD}}{=}
\epsilon_k \begin{pmatrix} a_k & -b_k\\ -a_kn-c_k  & b_kn+d_k \\ \end{pmatrix}
\end{equation}
for $\epsilon_k\in\{1, -1\}$ and some $a_k \in \mathbb Z_{>0}$ and $b_k, c_k, d_k \in \mathbb Z_{\geq 0}$ with $a_k d_k-b_k c_k = \epsilon_k$. 
\end{thm}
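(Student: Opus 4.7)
The plan is to prove the matrix identity $\begin{pmatrix} x_k & z_k \\ y_k & w_k \end{pmatrix} = T(I_{k, i_k})$ by induction on $k$; the remaining claim about $a_k, b_k, c_k, d_k, \epsilon_k$ then follows immediately from Lemma~\ref{lemABCD}. The base case $k=0$ is immediate: the initial 4-tuple (\ref{eq:initial 4-tuple}) and the initial matrix (\ref{base case}) both equal $\begin{pmatrix} 1 & 0 \\ -n & 1 \end{pmatrix}$, and by construction $\alpha \in I_{0, 1}$, so $i_0 = 1$.

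For the inductive step I assume the identity at level $k$, write the inductive-hypothesis matrix as $\epsilon_k \begin{pmatrix} a_k & -b_k \\ -a_k n - c_k & b_k n + d_k \end{pmatrix}$, and set $\mu := \frac{2c_k + d_k}{2a_k + b_k}$. Two parallel mechanisms subdivide the data. On the train-track side, Proposition~\ref{prop:algorithm of 4-tuple} right-multiplies $\begin{pmatrix} x_k & z_k \\ y_k & w_k \end{pmatrix}$ by either $P_1 := \begin{pmatrix} -1 & 1 \\ 1 & 0 \end{pmatrix}$ (Type~1) or $P_2 := \begin{pmatrix} 1 & -1 \\ 0 & 1 \end{pmatrix}$ (Type~2), chosen by the sign of $(2y_k - w_k)\alpha - (z_k - 2x_k)$. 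On the interval side, Definition~\ref{def:of functionT} assigns to $I_{k+1, 2i_k}$ and $I_{k+1, 2i_k - 1}$ these same two multipliers, with the pairing controlled by $\sgn(x_k) = \epsilon_k$. The task is to show that both mechanisms pick out the same right-multiplication under the identification $i_{k+1} \in \{2i_k, 2i_k - 1\}$ from Lemma~\ref{L2}.

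Substituting the inductive-hypothesis formulas for $x_k, y_k, z_k, w_k$ yields, after straightforward algebra,
\[
(2y_k - w_k)\alpha - (z_k - 2x_k) = -\epsilon_k (2a_k + b_k)\bigl[(n + \mu)\alpha - 1\bigr],
\]
so Type~1 corresponds to $\alpha < \frac{1}{n + \mu}$ when $\epsilon_k = 1$ and to $\alpha > \frac{1}{n + \mu}$ when $\epsilon_k = -1$. On the interval side, Lemma~\ref{L1} together with the Farey-mediant identity $\tfrac{c_k}{a_k} \boxplus \tfrac{c_k + d_k}{a_k + b_k} = \mu$ shows that $I_{k, i_k}$ is split exactly at $\tfrac{1}{n + \mu}$, and the ordering of the Farey sequence forces $I_{k+1, 2i_k}$ to be the smaller-value half of the split, independently of $\epsilon_k$; hence $\alpha \in I_{k+1, 2i_k} \Leftrightarrow \alpha < \frac{1}{n + \mu}$ in both sign cases. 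Combining these two facts gives: Type~1 $\Leftrightarrow$ $i_{k+1} = 2i_k$ when $\epsilon_k = 1$, and Type~1 $\Leftrightarrow$ $i_{k+1} = 2i_k - 1$ when $\epsilon_k = -1$. This is exactly the $\epsilon_k$-dependent pairing of $P_1$/$P_2$ with $I_{k+1, 2i_k}$/$I_{k+1, 2i_k - 1}$ built into Definition~\ref{def:of functionT}, so all four combinations (two values of $\epsilon_k$ times two Types) agree, closing the induction.

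The main obstacle is the sign bookkeeping: although the interval membership $\alpha \in I_{k+1, 2i_k} \Leftrightarrow \alpha < \frac{1}{n+\mu}$ is independent of $\epsilon_k$, the Type~1 inequality re-expressed in terms of $\alpha$ flips direction with $\epsilon_k$ because of the factor $-\epsilon_k$ in the algebraic identity above. That flip is exactly mirrored by the $x > 0$ versus $x < 0$ split in Definition~\ref{def:of functionT}; once both flips are traced through in parallel, the remainder of the verification is routine $2 \times 2$ matrix multiplication.
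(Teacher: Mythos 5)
Your proposal is correct and follows essentially the same route as the paper: induction on $k$, translating the Type~1/Type~2 inequality of Proposition~\ref{prop:algorithm of 4-tuple} via the inductive hypothesis into the position of $\alpha$ relative to the mediant point $\frac{1}{n+\mu}$, and matching the resulting choice of $i_{k+1}\in\{2i_k,2i_k-1\}$ with the $x>0$ versus $x<0$ branches of Definition~\ref{def:of functionT}. The only difference is cosmetic: the paper writes out one of the four $(\epsilon_k,\mathrm{Type})$ cases in detail and declares the rest similar, while you package all four cases into the single factored identity $-\epsilon_k(2a_k+b_k)\bigl[(n+\mu)\alpha-1\bigr]$.
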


Note that $\epsilon_k=\sgn(\T_k)$, the {\em sign} of the train track $\mathcal T_k$ as in Definition~\ref{def of 4-tuple and ratio}.

\begin{proof}
The proof is done by induction.
When $k=0$, the assertion holds by (\ref{eq:initial 4-tuple}) and (\ref{base case}).  

Assume that (\ref{eq of T(I)}) holds for $k$. 
Recall that $\cap_{l=1}^\infty I_{l,i_l}=\{\alpha\}$.
Since $I_{k,i_k}$ splits into two intervals $I_{k+1, 2 i_k}$ and $I_{k+1, 2 i_k-1},$ 
we have either $I_{k+1, i_{k+1}}=I_{k+1, 2 i_k}$ or $I_{k+1, 2 i_k-1}$, equivalently $i_{k+1}=2i_k$ or $2i_k-1$.
There are four cases to consider depending on the sign of $\epsilon_k$ and the type (see Proposition~\ref{prop:algorithm of 4-tuple}) of the 4-tuple. 

Suppose that $\epsilon_k=1$ and $(x_{k+1}, y_{k+1} \ ; \ z_{k+1}, w_{k+1})$ has Type 1. 
By the induction hypothesis, we get 
$$
\epsilon_k (2(-a_kn-c_k)-(b_kn+d_k))\alpha 
\stackrel{(\ref{eq of T(I)})}{=}
(2y_k-w_k) \alpha 
\stackrel{\rm Type1}{>} z_k -2 x_k 
\stackrel{(\ref{eq of T(I)})}{=} 
\epsilon_k(-b_k-2a_k), 
$$ 
which yields 
$\alpha< \frac{1}{n+\frac{2c_k+d_k}{2a_k+b_k}}$. 
Since $I_{k, i_k}=\left(\frac{1}{n+\frac{c_k+d_k}{a_k+b_k}}, \frac{1}{n+\frac{c_k}{a_k}}\right)$ by (\ref{eq:I}) and the assumption $\epsilon_k=1$, we have 
$$
\alpha \in \left(\frac{1}{n+\frac{c_k+d_k}{a_k+b_k}}, \frac{1}{n+\frac{2c_k+d_k}{2a_k+b_k}}\right) \stackrel{(\ref{eq:L1})}{=}I_{k+1, 2i_k}. 
$$ 
This gives  
\begin{equation}\label{eqn:i=2i}
i_{k+1}=2i_k.
\end{equation}
The assertion can be verified as follows:
$$
\begin{pmatrix} x_{k+1} & z_{k+1} \\ y_{k+1} & w_{k+1} \\ \end{pmatrix}
\stackrel{\rm Prop\ref{prop:algorithm of 4-tuple}}{=}
\begin{pmatrix} z_k-x_k & x_k \\ w_k-y_k & y_k \\ \end{pmatrix}
\stackrel{(\ref{eqn:def of T})}{=} 
T(I_{k+1, 2i_k})=T(I_{k+1, i_{k+1}}).
$$

Similarly, we can verify the assertion for the remaining three cases.  
\end{proof}

The following technical corollary will be useful later. 

\begin{corollary}\label{cor:Type and interval}
In the above proof of Theorem~\ref{thm:mapT}, we 
have observed that:
\begin{enumerate}
\item
If $\epsilon_k=+1$ and $(x_{k+1}, y_{k+1} \ ; \ z_{k+1}, w_{k+1})$ has Type 1 then $i_{k+1}=2i_k$. 
\item
If $\epsilon_k=+1$ and $(x_{k+1}, y_{k+1} \ ; \ z_{k+1}, w_{k+1})$ has Type 2 then $i_{k+1}=2i_k-1$. 
\item
If $\epsilon_k=-1$ and $(x_{k+1}, y_{k+1} \ ; \ z_{k+1}, w_{k+1})$ has Type 1 then $i_{k+1}=2i_k-1$. 
\item
If $\epsilon_k=-1$ and $(x_{k+1}, y_{k+1} \ ; \ z_{k+1}, w_{k+1})$ has Type 2 then $i_{k+1}=2i_k$. 
\end{enumerate}
With Lemma~\ref{lem:sign of k+1}, we obtain:
\begin{itemize}
\item
for Case $(1)$ and $(4)$ $\epsilon_{k+1}=-1,$ and 
\item
for Case $(2)$ and $(3)$ $\epsilon_{k+1}=+1.$
\end{itemize}
In particular, a Type~1 maximal splitting $\stackrel{(\ast)}{\rightharpoonup}$ changes the sign of the train track and a Type~2 maximal splitting $\stackrel{(\ast\ast)}{\rightharpoonup}$ preserves the sign of the train track. Thus, in Figure~\ref{figure-121'2'}, we only see $(\pm)\stackrel{(\ast)}{\rightharpoonup}(\mp)$ and  $(\pm)\stackrel{(\ast\ast)}{\rightharpoonup}(\pm)$. 
\end{corollary}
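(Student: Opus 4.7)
The plan is to complete the case analysis that was started inside the proof of Theorem~\ref{thm:mapT}. Case (1) was handled explicitly there, so my task reduces to running the same template through Cases (2), (3), and (4). For each case I would substitute the matrix entries from (\ref{eq of T(I)}) into the inequality of Proposition~\ref{prop:algorithm of 4-tuple} that distinguishes Type~1 from Type~2, rearrange to isolate $\alpha$, and then compare the resulting bound with the two Farey sub-intervals produced by Lemma~\ref{L1}.

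For example, in Case (2), where $\epsilon_k=+1$ and the splitting is Type~2, the inequality $(2y_k-w_k)\alpha<z_k-2x_k$ becomes, after substitution, $\alpha>\frac{1}{n+\frac{2c_k+d_k}{2a_k+b_k}}$. Since (\ref{eq:I}) gives $I_{k,i_k}=\left(\frac{1}{n+\frac{c_k+d_k}{a_k+b_k}},\frac{1}{n+\frac{c_k}{a_k}}\right)$ in this case, the bound forces $\alpha$ into the right-hand sub-interval $I_{k+1,2i_k-1}$, hence $i_{k+1}=2i_k-1$. Cases (3) and (4), in which $\epsilon_k=-1$, proceed the same way, except that the overall sign in (\ref{eq of T(I)}) flips the matrix entries and, through (\ref{eq:I}), reverses the endpoints of $I_{k,i_k}$; as a result, a Type~1 splitting now places $\alpha$ in the right sub-interval and a Type~2 splitting in the left one.

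Once $i_{k+1}\in\{2i_k,2i_k-1\}$ has been identified in every case, the corresponding value of $\epsilon_{k+1}$ is immediate from Lemma~\ref{lem:sign of k+1}, which asserts $\epsilon_{k+1,2i_k}=-1$ and $\epsilon_{k+1,2i_k-1}=+1$. Reading off the four lines then gives $\epsilon_{k+1}=-1$ in Cases (1) and (4) and $\epsilon_{k+1}=+1$ in Cases (2) and (3). The final clause, that a Type~1 splitting flips the sign and a Type~2 splitting preserves it, follows by direct inspection of the same table.

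The main obstacle is purely bookkeeping: the sign $\epsilon_k$ controls the orientation of $I_{k,i_k}$ through (\ref{eq:I}), while the Type~1/Type~2 dichotomy controls the direction of the derived inequality on $\alpha$. Each of the four cases reverses exactly one of these two orientations, so care is needed not to accidentally reverse both and mislocate the subinterval. Apart from this sign discipline, the arithmetic is essentially a verbatim repetition of the Case (1) computation already recorded in the proof of Theorem~\ref{thm:mapT}.
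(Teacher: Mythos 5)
Your proposal is correct and follows essentially the same route as the paper: the paper's own proof of Corollary~\ref{cor:Type and interval} is just the remark "in the above proof of Theorem~\ref{thm:mapT}, we have observed..." with Case (1) worked out explicitly and the other three deferred to a "similar argument," and your completion of Cases (2)--(4) (substitute the matrix entries from (\ref{eq of T(I)}) into the Type~1/Type~2 inequality of Proposition~\ref{prop:algorithm of 4-tuple}, isolate $\alpha$, and compare against the orientation of $I_{k,i_k}$ from (\ref{eq:I}) and the split of Lemma~\ref{L1}) is exactly what the authors intend. Your sign bookkeeping is right in each case, and the final deduction via Lemma~\ref{lem:sign of k+1} that $\epsilon_{k+1,2i_k}=-1$, $\epsilon_{k+1,2i_k-1}=+1$ completes the table as stated.
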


\section{Nested Farey intervals}

In this section we define nested intervals $\{J_k\}_k$ in the Farey Tessellation and an ${\bf LR}$-sequence. 
 
Let $\beta$ be a pseudo-Anosov 3-braid with the MP-ratio $\alpha$. 
Let $n=\lfloor 1/\alpha \rfloor$. 
In Lemma~\ref{L2}, we have shown that $\alpha$ is the intersection of  nested intervals $I_{1,i_1} \supset I_{2,i_2} \supset I_{3,i_3} \supset \cdots$ where the interval $I_{k,i_k}=\left ( \frac{1}{n+a_{k,i_k+1}}, \frac{1}{n+a_{k,i_k}} \right).$ 
We define $J_k$ as a `reciprocal' of $I_{k, i_k}$:

\begin{definition}\label{def:nested Farey intervals} 
Define an open interval 
$$J_k := (a_{k,i_k} , a_{k,i_k+1})$$ 
for $k=0,1,\cdots$. 
Note that 
$J_0=(\frac{0}{1}, \frac{1}{1})$ and 
$J_k$ is a Farey interval since 
there is an arc in the Farey tessellation connecting the boundary points $a_{k,i_k}$ and $a_{k,i_k+1}.$

The fact $\alpha\in I_{k, i_k}$ is equivalent to $\frac{1}{\alpha}-n \in J_k$ and the nested sequence (\ref{eq:nested_seq}) can be translated into
\begin{equation}\label{eq:J sequence}
J_0 \supset J_1 \supset J_2 \supset J_3 \supset \cdots \supset \bigcap_{k=1}^\infty J_k = \biggl\{ \frac{1}{\alpha}-n\biggr\}, 
\end{equation}
which we call the sequence of {\em nested Farey intervals} for the braid $\beta$. 
\end{definition}

\begin{definition}\label{def:LandR}
The interval 
$J_k= (a_{k,i_k} ,  a_{k,i_k+1})$ splits into two Farey intervals; 
\begin{itemize}
\item[({\bf L})]
the left subinterval  $(a_{k,i_k} , \ a_{k,i_k}\boxplus a_{k,i_k+1})$
that is reciprocal to $I_{k+1, 2i_k-1}$, and
\item[({\bf R})]
the right subinterval 
$(a_{k,i_k}\boxplus a_{k,i_k+1}, \ a_{k,i_k+1})$
that is reciprocal to $I_{k+1, 2i_k}.$
\end{itemize}
For each $k=0, 1, \cdots$, the interval $J_{k+1}$ is exactly the left or right subinterval. 
We associate a letter {\bf L} or {\bf R} to each interval $J_{k+1}$ depending on the left or right subinterval status. 
The nested interval sequence (\ref{eq:J sequence}) can be encoded into a sequence in {\bf L} and {\bf R}. 
We call it the {\em{\bf LR}-sequence} for $\beta$. 
\end{definition}

\begin{example}\label{example:LR-sequence}
Let $\beta = \sigma_1^4 \sigma_2 \sigma_1^3 \sigma_2^4$ which is  a pseudo-Anosov 3-braid.  
The MP-ratio is $\alpha=(19-\sqrt{221})/14 \approx 0.2952 \cdots$ and $n=\lfloor 1/\alpha \rfloor = 3$. 
Here is an estimate of $1/\alpha - n$:  
$$
\frac{0}{1}< \frac{1}{3} < \frac{3}{8} < \frac{5}{13} <  \cdots < \frac{1}{\alpha}-n < \cdots
< \frac{7}{18} < \frac{2}{5} < \frac{1}{2} < \frac{1}{1}
$$
This gives the nested Farey intervals that converges to $1/\alpha - n$ (see Figure~\ref{fig:Farey-beta2}) 
$$J_0=(0/1, 1/1) \supset J_1=(0/1, 1/2) \supset J_2=(1/3, 1/2) \supset J_3=(1/3, 2/5)$$ 
$$\supset J_4=(3/8, 2/5) \supset J_5=(5/13, 2/5) \supset J_6=(5/13, 7/18) \supset \cdots
$$
and its associated {\bf LR}-sequence;  
${\bf L, R, L, R, R, L, \cdots}$.
To see this, we note that $J_1$ is the left subinterval of $J_0$; thus the first letter of the sequence is $\bf L$. Likewise, $J_2$ is the right subsequence of $J_1$; thus the second letter of the sequence is $\bf R$.

\begin{figure}[h]
\includegraphics[height=8cm]{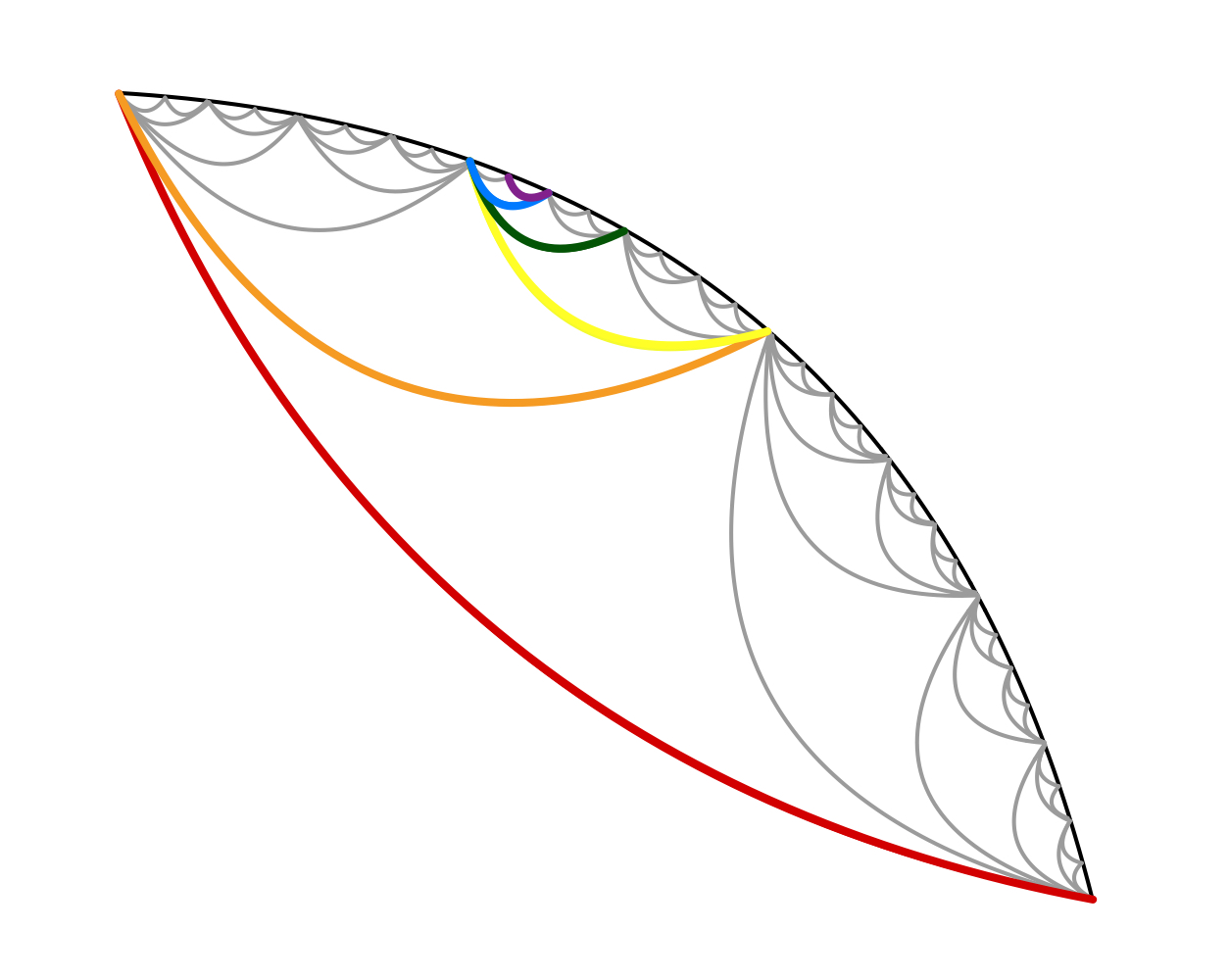}
\put(-28, 15){$\frac{0}{1}$}
\put(-260, 215){$\frac{1}{2}$}
\put(-105, 155){$\frac{1}{3}$}
\put(-180, 198){$\frac{2}{5}$}
\put(-140, 180){$\frac{3}{8}$}
\put(-158, 190){$\frac{5}{13}$}
\put(-170, 195){$\frac{7}{18}$}
\put(-210, 100){$J_1$}
\put(-180, 120){$J_2$}
\put(-180, 160){$J_3$}
\put(-155, 160){$J_4$}
\caption{(Example~\ref{example:LR-sequence}) 
Nested Farey intervals for $\beta=\sigma_1^4 \sigma_2 \sigma_1^3 \sigma_2^4$; 
$J_1=(0/1, 1/2)$ red, $J_2=(1/3, 1/2)$ orange, $J_3=(1/3, 2/5)$ yellow, $J_4=(3/8, 2/5)$ green, $J_5=(5/13, 2/5)$ blue, $J_6=(5/13, 7/18)$ purple. 
}
\label{fig:Farey-beta2}
\end{figure}
\end{example}



\begin{example}\label{examples beta and beta'} 
Let $\beta = \sigma_1^4 \sigma_2 \sigma_1^3 \sigma_2^4$ and $\beta'=\sigma_1^{-4} \sigma_2^{-4} \sigma_1^{-3} \sigma_2^{-1}$ be pseudo-Anosov 3-braids. 
It is interesting that $\beta$ is conjugate to $\beta' \Delta^8$ in $B_3$ and also $\beta$ is the mirror image of the negative flype (Definition~\ref{def:flype}) of $\beta'$, which is ${\tt flype}(\beta')=\sigma_1^{-4} \sigma_2^{-1} \sigma_1^{-3} \sigma_2^{-4}$. 

The I-II-I'-II'-sequences of $\beta$ and $\beta'$ are 
$$
{\rm II', I', I, I', II, I, II', I', I, I', II, I, II', I', I, I', II, I, II', I', I, I', II, I,} \cdots 
$$
and 
$$
{\rm I', I, II', I', I, I', II, I, II', I', I, I', II, I, II', I', I, I', II, I, II', I', I, I', II, I,} \cdots,
$$
respectively. 
The computations were completed with the use of MatLab. 
Removing the beginning ${\rm I', I}$ from the latter sequence, the two I-II-I'-II'-sequences become identical. 

The {\bf LR}-sequences of $\beta$ and $\beta'$ are 
$$
{\bf L, R, L, R, R, L, L, R, L, R, R, L, L, R, L, R, R, L, L, R, L, R, R, L, \cdots}
$$
and
$$
{\bf R, L, L, R, L, R, R, L, L, R, L, R, R, L, L, R, L, R, R, L, L, R, L, R, R, L, \cdots},
$$
respectively. 
Again, removing the beginning ${\bf R, L}$ from the latter sequence, the two {\bf LR}-sequences are identical. 

In fact, $\beta'$ has $\alpha' = (37+ \sqrt{221})/82 \approx 0.6325130335\cdots$ and $n' = \lfloor 1/\alpha' \rfloor = 1$ and 
nested Farey intervals 
$J_0=(\frac{0}{1}, \frac{1}{1})$ $\supset$ 
$J_1=(\frac{1}{2}, \frac{1}{1})$ $\supset$ 
$J_2=(\frac{1}{2}, \frac{2}{3})$ $\supset$ 
$J_3=(\frac{1}{2}, \frac{3}{5})$ $\supset$ 
$J_4=(\frac{4}{7}, \frac{3}{5})$ $\supset$ 
$J_5=(\frac{4}{7}, \frac{7}{12})$ $\supset$ 
$J_6=(\frac{11}{19}, \frac{7}{12})$ $\supset$ 
$J_7=(\frac{18}{31}, \frac{7}{12})$ $\supset$ 
$J_8=(\frac{18}{31}, \frac{25}{43})$ $\supset \cdots$ that are converging to $\frac{1}{\alpha'}-n'$. 
In Figure~\ref{fig:Farey-beta-prime} the intervals $J_3, \cdots, J_8$ are highlighted. 
\begin{figure}[h]
\includegraphics[height=8cm]{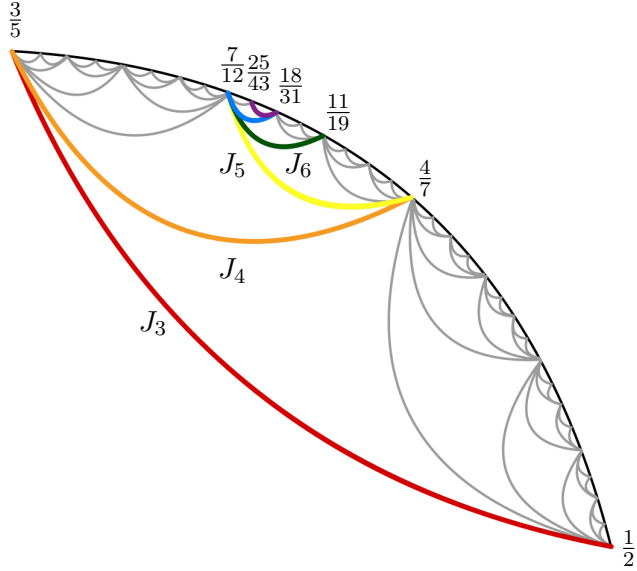}
\put(-28, 15){$\frac{1}{2}$}
\put(-260, 215){$\frac{3}{5}$}
\put(-105, 155){$\frac{4}{7}$}
\put(-180, 198){$\frac{7}{12}$}
\put(-140, 180){$\frac{11}{19}$}
\put(-158, 190){$\frac{18}{31}$}
\put(-170, 195){$\frac{25}{43}$}
\put(-210, 100){$J_3$}
\put(-180, 120){$J_4$}
\put(-180, 160){$J_5$}
\put(-155, 160){$J_6$}
\caption{(Example~\ref{examples beta and beta'}) 
Nested Farey intervals for $\beta'=\sigma_1^{-4} \sigma_2^{-4} \sigma_1^{-3} \sigma_2^{-1}$}
\label{fig:Farey-beta-prime}
\end{figure}
Up to zooming in, the pictures in Figures~\ref{fig:Farey-beta2} and \ref{fig:Farey-beta-prime} are the same.
\end{example}

\section{Necessary conditions for equivalent Agol cycles}
\label{sec:Proof of Main Theorem}

We are ready to state the main theorem that gives a number of necessary conditions for equivalent Agol cycles. Those conditions have different characteristics: topological, number theoretic, and numerical.

\begin{theorem}\label{thm:TFAE}
Let $\beta$ and $\beta'$ be pseudo-Anosov 3-braids. 
The statements (1) and (2) are equivalent (proved by Agol). 
The statement (2) implies (3),  (3) implies (4), and (4) implies (5).
\begin{enumerate}
\item
$\beta$ and $\beta'$ are conjugate in $B_3$ up to a center element. 
\item
$\beta$ and $\beta'$ have equivalent Agol cycles.
\item
{\em (Topological condition)}
There exist $l$ and $m \in \N$ such that $\sgn(\T_l)=\sgn(\T'_{m})$ and the triple-weight train track sequences 
$$
\T_l \msp \T_{l+1} \msp \T_{l+2} \msp \cdots \qquad (\mbox{for } \beta)
$$ 
and
$$
\T'_{m} \msp \T'_{m+1} \msp \T'_{m+2} \msp \cdots \qquad (\mbox{for } \beta')
$$ 
give the same periodic I-II-I'-II'-sequence. 
\item
{\em (Number theoretic condition)}
There exist $l$ and $m \in \N$ such that 
$\sgn(\T_l)=\sgn(\T'_{m})$ and the nested Farey interval sequences 
$J_l \supset J_{l+1} \supset J_{l+2} \supset \cdots$ for $\beta$ and 
$J'_{m} \supset J'_{m+1} \supset J'_{m+2} \supset \cdots$ for $\beta'$ 
give the same periodic ${\bf LR}$-sequences.  
\item
{\em (Numerical condition)}
There exist $l$ and $m \in \N$ such that 
the 4-ratios of $\T_l$ and $\T'_m$ are the same; namely, 
$$\frac{x_l+y_l\alpha}{z_l + w_l\alpha}=
\frac{x'_m+y'_m\alpha'}{z'_m + w'_m\alpha'}.$$
\end{enumerate}
\end{theorem}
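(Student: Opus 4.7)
My plan is to prove the three implications $(2)\Rightarrow(3)$, $(3)\Rightarrow(4)$, and $(4)\Rightarrow(5)$ in order, leaning on the matrix formalism from Section~4 as the central tool; the equivalence $(1)\Leftrightarrow(2)$ is quoted from Agol.

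For $(2)\Rightarrow(3)$, I align the two Agol cycles: equivalence up to homeomorphism and scaling produces indices $l,m$ landing in the periodic parts at which $\T_l$ and $\T'_m$ are equivalent triple-weight train tracks, with all subsequent steps matching by naturality of maximal splitting. Since Theorem~\ref{thm:closed-system} lists Type I, I', II, II' as homeomorphism invariants, the I-II-I'-II'-sequences starting from $\T_l$ and $\T'_m$ agree. For sign-matching, I use that Agol-cycle equivalence forces $\alpha=\alpha'$ (both are recovered from the invariant train track's weight ratio), so corresponding 4-tuples differ only by a positive rational scalar and $\sgn(x_l)=\sgn(x'_m)$ follows.

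For $(3)\Rightarrow(4)$, I reconstruct the {\bf LR}-sequence one letter at a time. Corollary~\ref{cor:rel_of_types} says the Type of $\T_{l+k+1}$ determines whether $\T_{l+k}\rightharpoonup\T_{l+k+1}$ is Type~1 or Type~2, and the four cases in Corollary~\ref{cor:Type and interval} then convert this splitting type together with the current sign $\epsilon_{l+k}$ into both the {\bf L}/{\bf R} letter and the next sign $\epsilon_{l+k+1}$. Starting from $\epsilon_l=\epsilon'_m$ and walking through the common I-II-I'-II'-sequence, this inductive procedure produces identical {\bf LR}-sequences for $\beta$ and $\beta'$.

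For $(4)\Rightarrow(5)$, which I expect to be the main obstacle, I first reduce the 4-ratio to a M\"obius transformation in the Farey variable. A short calculation using Theorem~\ref{thm:mapT} and Lemma~\ref{lemABCD} shows that, regardless of $\epsilon_l\in\{\pm 1\}$, the 4-ratio of $\T_l$ equals
\[
\phi_l(\gamma) := \frac{a_l\gamma - c_l}{d_l - b_l\gamma}, \qquad \gamma = \tfrac{1}{\alpha} - n,
\]
and $\phi_l$ sends $c_l/a_l \mapsto 0$ and $(c_l + d_l)/(a_l + b_l) \mapsto 1$, so it is a $\mathrm{PGL}_2(\mathbb Z)$-map carrying $J_l$ onto $(0,1)$ with determinant $\epsilon_l$. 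Setting $\Psi := (\phi'_m)^{-1}\circ\phi_l$, the hypothesis $\sgn(\T_l)=\sgn(\T'_m)$ gives $\det\Psi=+1$, so $\Psi\in\mathrm{PSL}_2(\mathbb Z)$ acts orientation-preservingly on $J_l$; since $\mathrm{PSL}_2(\mathbb Z)$ preserves the Farey tessellation, $\Psi$ carries mediants of Farey neighbors to mediants and thus left sub-intervals of $J_l$ to left sub-intervals of $J'_m$. An induction on $k$, using the matching {\bf LR}-sequence and the sign tracking from Lemma~\ref{lem:sign of k+1}, yields $\Psi(J_{l+k})=J'_{m+k}$ for every $k\ge 0$; passing to the nested intersections $\bigcap_k J_{l+k}=\{\gamma\}$ and $\bigcap_k J'_{m+k}=\{\gamma'\}$ gives $\Psi(\gamma)=\gamma'$, hence $\phi_l(\gamma)=\phi'_m(\gamma')$, which is precisely equality of 4-ratios. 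The main difficulty here is the bookkeeping inside the induction: confirming that the sign-dependent orientation of $\phi_l$ and $\phi'_m$ conspires to make $\Psi$ orientation-preserving exactly when the signs of $\T_l$ and $\T'_m$ agree, so that matching {\bf LR}-sequences really do translate into $\Psi$ respecting the Farey nesting.
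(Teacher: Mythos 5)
The weak link is your step $(2)\Rightarrow(3)$, specifically the sign-matching argument. You assert that ``Agol-cycle equivalence forces $\alpha=\alpha'$''. This is false, and the paper itself refutes it: in Example~\ref{examples beta and beta'}, $\beta=\sigma_1^4\sigma_2\sigma_1^3\sigma_2^4$ and $\beta'=\sigma_1^{-4}\sigma_2^{-4}\sigma_1^{-3}\sigma_2^{-1}$ are conjugate up to a central power of $\Delta^2$ (so they have equivalent Agol cycles), yet $\alpha=(19-\sqrt{221})/14\approx 0.295$ lies in $(1/4,1/3)$ while $\alpha'=(37+\sqrt{221})/82\approx 0.633$ lies in $(1/2,1)$. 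Equivalence of Agol cycles only gives proportionality of edge weights after applying a homeomorphism --- it does not identify the MP-ratios, nor does it give a positive rational scalar between corresponding $4$-tuples expressed in the two bases $\{1,\alpha\}$ and $\{1,\alpha'\}$. So the conclusion $\sgn(x_l)=\sgn(x'_m)$ does not follow from the argument you give, and the sign condition in $(3)$ is left unjustified. You need a different mechanism to align the signs (for instance, tracking sign parity along the aligned periodic parts via Corollary~\ref{cor:Type and interval}, since the sign is determined by the parity of Type~1 splittings, and showing that the period itself preserves sign).

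Your $(3)\Rightarrow(4)$ and $(4)\Rightarrow(5)$ are fine. For $(3)\Rightarrow(4)$ you follow the same inductive route as the paper, converting I-II-I'-II'-types into splitting types via Corollary~\ref{cor:rel_of_types} and then into sign transitions and {\bf L}/{\bf R} letters via Corollary~\ref{cor:Type and interval}. For $(4)\Rightarrow(5)$, your M\"obius-map reformulation $\phi_l(\gamma)=\frac{a_l\gamma - c_l}{d_l - b_l\gamma}$ with $\gamma=\frac{1}{\alpha}-n$ is exactly what the paper's computation delivers (it is the slope of the left vector in their display (\ref{eq:parallel2})), and your observation that $\det\phi_l=\epsilon_l$ so that $\Psi=(\phi'_m)^{-1}\circ\phi_l\in\mathrm{PSL}_2(\mathbb Z)$ when the signs agree is the same as the paper's $\det N = \epsilon_l\epsilon'_m=1$; the passage via $\Psi(J_{l+k})=J'_{m+k}$ and nested intersections mirrors the paper's parallel-vector argument. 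This part is a legitimately clean way to package the paper's computation, but it is not a different proof.
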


The same {\bf LR}-sequence condition in (4) implies the 
existence of integers $A, B, C, D$ with $AD-BC=1$ or $-1$ (if $\sgn(\T_l) =-\sgn(\T'_{m})$ we have $AD-BC=-1$) such that 
$$
\frac{1}{\alpha} - n 
= \frac{A (\frac{1}{\alpha'}-n') + B}{C (\frac{1}{\alpha'}-n') +D}.
$$
In particular, when $l=m=1$ we obtain
$$\frac{1}{\alpha}-n=\frac{1}{\alpha'}-n'.$$
More detail and precise numbers are given in the proof of the theorem.

A feature of condition (5) is that it is not seeing the entire infinite sequences like (3) and (4), but rather focusing on two particular train tracks $\T_l$ and $\T'_m$. 

\begin{proof}[Proof of Theorem~\ref{thm:TFAE}]

The equivalence of (1) and (2) is due to Agol \cite{Agol}. See also Margalit's talk slides \cite{slide}.

The statement (2) implies (3) since Types I, II, I' and II' (introduced in Theorem~\ref{thm:closed-system} and illustrated in Figure~\ref{figure-121'2'}) are defined up to homeomorphism and these are the only homeomorphism types that appear in Agol cycles. 

(3) $\Rightarrow$ (4): 
We need to show for all $t=0, 1, 2, \cdots$, $J_{l+t}$ and $J'_{m+t}$ correspond to the same letter, either ${\bf L}$ or ${\bf R}$.

We first check the base case ($t=0$): 
By Lemma~\ref{lem:sign of k+1} and Definition~\ref{def:LandR}, the condition $\epsilon_l=\sgn(\T_l)=\sgn(\T'_{m})=\epsilon'_{m}$ implies that the Farey intervals $J_l$ and $J'_{m}$ correspond to the same letter, either ${\bf L}$ or ${\bf R}$. 

To see the next stage ($t=1$), we first note that by the assumption (3) and  Corollary~\ref{cor:rel_of_types},
the 4-tuples of $\T_{l+1}$ and $\T'_{m+1}$ have the same type (Type 1 or Type 2).  
Then by Corollary~\ref{cor:Type and interval}, $\epsilon_{l}=\epsilon'_{m}$ implies that  $\epsilon_{l+1}=\epsilon'_{m+1}$. 
The argument for the base case applies here and we can conclude that $J_{l+1}$ and $J'_{m+1}$ correspond to the same letter, either ${\bf L}$ or ${\bf R}$.

Inductively, for all $t=2, 3, \cdots,$ we can show that  $J_{l+t}$ and $J'_{m+t}$ correspond to the same letter, either ${\bf L}$ or ${\bf R}$.

(4) $\Rightarrow$ (5):
We first note that condition (4) is equivalent to the existence of a 2$\times$2 matrix 
$N=
\begin{pmatrix}
A & B\\
C & D
\end{pmatrix}
\in SL(2, \Z)$ that takes the Farey interval 
$J_{l+t}=
(\frac{u_t}{v_t}, \frac{w_t}{s_t})$ to $J'_{m+t}=
(\frac{u'_t}{v'_t}, \frac{w'_t}{s'_t})$ simultaneously; which means 
$N
\begin{pmatrix}
u_t & w_t\\
v_t & s_t
\end{pmatrix} 
=
\begin{pmatrix}
u'_t & w'_t\\
v'_t & s'_t
\end{pmatrix}
$ 
for each $t=0, 1, \cdots$. 
Since the nested sequences have convergences $\cap_{k=1}^\infty \{J_k\} = \{\frac{1}{\alpha}-n\}$ and $\cap_{k=1}^\infty\{J'_k\}= \{\frac{1}{\alpha'}-n'\}$, it is equivalent to 
$$
\frac{1}{\alpha} - n 
= \frac{A (\frac{1}{\alpha'}-n') + B}{C (\frac{1}{\alpha'}-n') +D}.
$$
In other words, the following vectors are parallel (the symbol $\parallel$ stands for parallel):
\begin{equation}\label{eq:parallel-N}
\begin{pmatrix}
\frac{1}{\alpha} - n \\
1
\end{pmatrix}
\parallel
N
\begin{pmatrix}
\frac{1}{\alpha'}-n' \\
1
\end{pmatrix}
\end{equation}

The matrix $N$ can be explicitly computed as follows:
By Lemma~\ref{lemABCD}, the Farey numbers $\frac{u_t}{v_t}, \frac{w_t}{s_t}, \frac{u'_t}{v'_t}, \frac{w'_t}{s'_t}$ for $t=0$ satisfy one of the two cases (here we use the assumption $\epsilon_l=\epsilon'_m$): 
\begin{eqnarray*}
\bullet \quad
\frac{u_0}{v_0}=\frac{c_l}{a_l}, \quad 
\frac{w_0}{s_0}=\frac{c_l+d_l}{a_l+b_l}, \quad
\frac{u'_0}{v'_0}=\frac{c'_m}{a'_m}, \quad
\frac{w'_0}{s'_0}=\frac{c'_m+d'_m}{a'_m+b'_m} && 
\mbox{ if } \epsilon_l=\epsilon_m=1,\\
\bullet \quad
\frac{u_0}{v_0}=\frac{c_l+d_l}{a_l+b_l}, \quad
\frac{w_0}{s_0}= \frac{c_l}{a_l}, \quad
\frac{u'_0}{v'_0}=\frac{c'_m+d'_m}{a'_m+b'_m},   \quad
\frac{w'_0}{s'_0}=\frac{c'_m}{a'_m} && 
\mbox{ if } \epsilon_l=\epsilon_m=-1.
\end{eqnarray*}
In either case, 
$$N =
\begin{pmatrix}
c_l & d_l\\
a_l & b_l
\end{pmatrix}
\begin{pmatrix}
c'_m & d'_m\\
a'_m & b'_m
\end{pmatrix}
^{-1}
$$ 
and $\det N = (-\epsilon_l) (-\epsilon'_m) =1$ since $\epsilon_l=\epsilon_m$.

Next, we note that
$$
\begin{pmatrix}
a_l & -c_l \\
-b_l & d_l
\end{pmatrix}^{-1}
\begin{pmatrix}
a'_m & -c'_m \\
-b'_m & d'_m
\end{pmatrix}
=
\begin{pmatrix}
c_l & d_l\\
a_l & b_l
\end{pmatrix}
\begin{pmatrix}
-1 & 0 \\
0 & -1
\end{pmatrix}
\begin{pmatrix}
c'_m & d'_m\\
a'_m & b'_m
\end{pmatrix}
^{-1}
 =- N.  
$$ 
By  (\ref{eq:parallel-N}) we have
\begin{equation}\label{eq:parallel2}
\begin{pmatrix}
a_l & -c_l \\
-b_l & d_l
\end{pmatrix}
\begin{pmatrix}
\frac{1}{\alpha}-n \\
1
\end{pmatrix}
\parallel
\begin{pmatrix}
a'_m & -c'_m \\
-b'_m & d'_m
\end{pmatrix}
\begin{pmatrix}
\frac{1}{\alpha'}-n' \\
1
\end{pmatrix}.
\end{equation}
Using Theorem~\ref{thm:mapT}, we can rewrite the 4-ratio of $\T_l$ as
$$
\frac{x_l+y_l\alpha}{z_l+w_l\alpha}
=\frac{a_l-(a_ln+c_l)\alpha}{-b_l + (b_ln+d_l)\alpha}
=\frac{a_l(\frac{1}{\alpha}-n)-c_l}{-b_l(\frac{1}{\alpha}-n)+d_l}, 
$$
which is equal to the slope of the left hand side vector of (\ref{eq:parallel2}). 
Likewise the 4-ratio of $\T'_m$ is equal to the slope of the right hand side vector of (\ref{eq:parallel2}). 
Therefore, we conclude the 4-ratios of $\T_l$ and $\T'_m$ are the same:
$\frac{x_l+y_l\alpha}{z_l+w_l\alpha}=\frac{x'_m+y'_m\alpha'}{z'_m+w'_m\alpha'}$.
\end{proof}

\section{Sufficient conditions for equivalent Agol cycles} 

In Section~\ref{sec:Proof of Main Theorem}, we studied necessary conditions of equivalent Agol cycles. 
In this section we explore sufficient conditions, especially related to the converse of (5) $\Rightarrow$ (2) in Theorem~\ref{thm:TFAE}. 
Our goal is to prove: 
if $\mathcal{T}_l$ and $\mathcal{T}_m'$ have the same 4-ratio for some $l$ and $m$, then Agol cycles of $\beta$ and $\beta'$ are equivalent or mirror equivalent.

Let $\T$ be a triple-weight train track and ${\tt Type}(\T) \in \{ {\rm I, II, I', II'}\}$ denote the homeomorphism type (Type I, II, I', and II') as introduced in Figure~\ref{figure-121'2'}. 

\begin{thm}\label{BigThm}
Let $\beta$ and $\beta'$ be pseudo-Anosov 3-braids. 
Suppose that 
there exist $l$ and $m$ such that triple-weight train tracks $\mathcal{T}_l$ and $\mathcal{T}_m'$ have the same 4-ratio; i.e., $\frac{x_l+y_l\alpha}{z_l+w_l\alpha}=\frac{x'_m+y'_m\alpha'}{z'_m+w'_m\alpha'}$. 
Then one of the following cases occur:  
\begin{enumerate}
\item
$\Type (\T_{l+1}) = \Type(\T'_{m+1})$ and
\item
$\Type (\T_{l+1}) = \Type(\T'_{m+1})'$ 
\end{enumerate}
In Case (1), 
$\beta$ and $\beta'$ have equivalent Agol cycles, and in Case (2) they have mirror equivalent Agol cycles. 
\end{thm}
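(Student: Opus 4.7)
The plan is to exploit the 4-ratio recursion implicit in Proposition~\ref{prop:algorithm of 4-tuple} together with the closed system of Figure~\ref{figure-121'2'}. The key observation is that the hypothesis $r_l := \frac{x_l+y_l\alpha}{z_l+w_l\alpha} = \frac{x'_m+y'_m\alpha'}{z'_m+w'_m\alpha'} =: r'_m$ controls both the splitting types and all subsequent 4-ratios of the two sequences, while the closed system forces the topological types to agree modulo the mirror involution $(\cdot)\leftrightarrow(\cdot)'$; this is precisely the dichotomy between Case~1 and Case~2.

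First I would reformulate the Type~1 versus Type~2 condition of Proposition~\ref{prop:algorithm of 4-tuple} as $r_k \gtrless \tfrac{1}{2}$, which is equivalent to $(2y_k-w_k)\alpha \gtrless z_k-2x_k$ after clearing the positive denominator $z_k+w_k\alpha$. Hence $r_l = r'_m$ forces the splittings $\T_l \msp \T_{l+1}$ and $\T'_m \msp \T'_{m+1}$ to share the same type. Direct substitution into the 4-tuple update of Proposition~\ref{prop:algorithm of 4-tuple} yields the one-variable recursion
\[
r_{k+1} = \begin{cases} (1-r_k)/r_k & \text{for Type 1,} \\ r_k/(1-r_k) & \text{for Type 2,} \end{cases}
\]
so $r_{l+1} = r'_{m+1}$, and by induction $r_{l+k} = r'_{m+k}$ with matching splitting types for every $k \geq 0$.

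Second I would invoke Corollary~\ref{cor:rel_of_types} together with Figure~\ref{figure-121'2'}: a Type~1 splitting always lands in the family $\{\text{I},\text{I}'\}$ and a Type~2 splitting always lands in $\{\text{II},\text{II}'\}$, independently of the source type. Therefore $\T_{l+1}$ and $\T'_{m+1}$ automatically lie in a common family, so either $\Type(\T_{l+1})=\Type(\T'_{m+1})$ (Case~1) or the two types are swapped by the mirror involution (Case~2). Since this mirror involution commutes with each maximal splitting (visible from the left--right symmetry of Figure~\ref{figure-121'2'}), the relation propagates: in Case~1 every pair $\T_{l+k}, \T'_{m+k}$ has matching types, and in Case~2 every such pair is related by the mirror.

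Finally I would conclude by observing that a triple-weight train track is determined up to homeomorphism and scaling by its topological type together with its 4-ratio, since the three edge weights are then fixed in ratio $r_k : 1 : 1+r_k$ by the switch condition. Thus in Case~1 the pairs $\T_{l+k}, \T'_{m+k}$ are equivalent as measured train tracks, and in Case~2 they are mirror equivalent. Because the Agol cycle is the eventually periodic part of the maximal splitting sequence, this pointwise agreement of the two infinite tails (up to mirror) yields equivalent (resp.\ mirror equivalent) Agol cycles, up to a cyclic shift. The principal obstacle I expect is verifying rigorously that the mirror involution genuinely commutes with maximal splitting on measured (not merely unmeasured) train tracks, which amounts to careful tracking of the sign $\epsilon_k$ via Lemma~\ref{lem:sign of k+1} and Corollary~\ref{cor:Type and interval}; once this is in place, Case~2 produces honest mirror equivalence of Agol cycles.
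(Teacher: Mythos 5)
Your proposal is correct, and it reaches the paper's endgame the same way (Corollary~\ref{cor:rel_of_types} plus the mirror symmetry of the closed system in Figure~\ref{figure-121'2'}, then the observation that topological type together with 4-ratio determines the measured train track up to homeomorphism and scaling), but it proves the crucial propagation step by a genuinely different and more elementary route. The paper isolates that step as Lemma~\ref{TechLem} and proves it through the Farey machinery: equality of 4-ratios is converted into a matrix $N$ with $\det N=\pm 1$ acting on the nested intervals, followed by a four-case sign analysis (Claims~\ref{claim} and \ref{claim for Type}, via Lemma~\ref{lem:sign of k+1} and Corollary~\ref{cor:Type and interval}). You instead observe that the inequality in Proposition~\ref{prop:algorithm of 4-tuple} is exactly $r_k\gtrless\tfrac12$ for the 4-ratio $r_k$, and that the update rule gives $r_{k+1}=(1-r_k)/r_k$ (Type 1) or $r_k/(1-r_k)$ (Type 2); hence the single real number $r_l=r'_m$ drives both sequences identically, which yields the content of Lemma~\ref{TechLem} in a few lines with no Farey intervals, no matrix $N$, and no sign bookkeeping. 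Two small remarks: you should note that $r_k=\tfrac12$ cannot occur (it would force $z_k=2x_k$ and $w_k=2y_k$ by irrationality of $\alpha$, contradicting $x_kw_k-y_kz_k=\epsilon_k=\pm1$ from Theorem~\ref{thm:mapT}), so the dichotomy is exhaustive; and the "principal obstacle" you flag is not one --- maximal splitting is natural under any homeomorphism, orientation-reversing included, so the mirror involution commutes with it without any tracking of $\epsilon_k$, which is also how the paper treats it. Your final passage from "same type and same 4-ratio for all $t$" to equivalent (resp.\ mirror equivalent) Agol cycles is at the same level of detail as the paper, which simply asserts a single homeomorphism $\phi$ carrying the whole tail; since splitting commutes with homeomorphism and scaling, a homeomorphism matching the one pair $\T_{l+1},\T'_{m+1}$ automatically matches all later pairs, so this is harmless. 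What the paper's longer route buys is the explicit matrix $N$ and the relation $\epsilon_{l+1}\epsilon'_{m+1}=\epsilon_l\epsilon'_m$, which connect this theorem to the ${\bf LR}$-sequence condition of Theorem~\ref{thm:TFAE}; what your route buys is brevity and transparency.
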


\begin{example}\label{ex:beta''}
Related to Example~\ref{examples beta and beta'}, 
let 
$\beta = \sigma_1^4 \sigma_2 \sigma_1^3 \sigma_2^4$, 
$\beta'=\sigma_1^{-4} \sigma_2^{-4} \sigma_1^{-3} \sigma_2^{-1}$,
and
$\beta''=\sigma_1^{-4} \sigma_2^{-1} \sigma_1^{-3} \sigma_2^{-4}.$  
The pair ($\beta, \beta'$) falls into Case (1) of Theorem~\ref{BigThm}. 
and the pair ($\beta'', \beta'$) falls into Case (2). 
It is interesting to point out that $\beta''$ and $\beta'$ are related by a non-degenerate flype. Thus they belong to distinct conjugacy classes but their Agol cycles are mirror equivalent. 
\end{example}

Below is a lemma needed to prove Theorem~\ref{BigThm}.

\begin{lem}\label{TechLem}
Suppose that $\mathcal{T}_l$ and $\mathcal{T}_m'$ have the same 4-ratio. 
Then for all $t\geq 1$, the subsequent train tracks $\mathcal{T}_{l+t}$ and $\mathcal{T}_{m+t}'$ have the same 4-ratio and their 4-tuples have the same type 
(either Type 1 or Type 2). 
\end{lem}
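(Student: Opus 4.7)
The plan is to show that both the type (Type 1 vs.\ Type 2) of the maximal splitting and the 4-ratio of the next train track depend only on the current 4-ratio, so the evolution of the 4-ratio is governed by a single one-variable dynamical system. Then induction on $t$ finishes the job.

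\textbf{Step 1: Encode the splitting type in terms of $r_k$.} Set $r_k := \frac{x_k + y_k\alpha}{z_k + w_k\alpha}$, the 4-ratio of $\T_k$. Since the 4-tuple orders the two smallest weights, $0 < r_k < 1$. Recall from Proposition~\ref{prop:algorithm of 4-tuple} that the splitting $\T_k \msp \T_{k+1}$ is Type 1 iff $(2y_k - w_k)\alpha > z_k - 2x_k$, which (since $z_k + w_k\alpha > 0$) is equivalent to $z_k + w_k\alpha < 2(x_k + y_k\alpha)$, i.e.\ $r_k > \tfrac{1}{2}$. Symmetrically, Type 2 is equivalent to $r_k < \tfrac{1}{2}$. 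Irrationality of $\alpha$ guarantees $r_k \neq \tfrac{1}{2}$, so exactly one case occurs.

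\textbf{Step 2: Express $r_{k+1}$ as a function of $r_k$.} Using Proposition~\ref{prop:algorithm of 4-tuple}:
\begin{itemize}
\item If $r_k > 1/2$ (Type 1), then $(x_{k+1}, y_{k+1}; z_{k+1}, w_{k+1}) = (z_k - x_k, w_k - y_k; x_k, y_k)$, hence
\[
r_{k+1} = \frac{(z_k - x_k) + (w_k - y_k)\alpha}{x_k + y_k\alpha} = \frac{1}{r_k} - 1 = \frac{1-r_k}{r_k}.
\]
\item If $r_k < 1/2$ (Type 2), then $(x_{k+1}, y_{k+1}; z_{k+1}, w_{k+1}) = (x_k, y_k; z_k - x_k, w_k - y_k)$, hence
\[
r_{k+1} = \frac{x_k + y_k\alpha}{(z_k - x_k) + (w_k - y_k)\alpha} = \frac{r_k}{1 - r_k}.
\]
\end{itemize}
In either case, $r_{k+1}$ is determined entirely by $r_k$; define $F(r) = (1-r)/r$ for $r > 1/2$ and $F(r) = r/(1-r)$ for $r < 1/2$, so that $r_{k+1} = F(r_k)$.

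\textbf{Step 3: Induct on $t$.} The hypothesis $r_l = r'_m$ together with Step 1 gives that the splittings $\T_l \msp \T_{l+1}$ and $\T'_m \msp \T'_{m+1}$ have the same type. By Step 2, $r_{l+1} = F(r_l) = F(r'_m) = r'_{m+1}$. Iterating, $r_{l+t} = r'_{m+t}$ and the common type of the $(l+t)$th and $(m+t)$th splittings matches, for every $t \geq 1$.

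The argument is essentially a one-line observation once the right reformulation is in place: the rule ``look at the smallest and second-smallest weights, split the largest off, relabel'' is scale-invariant, so the 4-ratio is a complete invariant for one step of the dynamics. There is no substantive obstacle; the only care needed is bookkeeping that Type 1 corresponds cleanly to $r_k > 1/2$ and Type 2 to $r_k < 1/2$, which Step 1 handles via a direct algebraic manipulation.
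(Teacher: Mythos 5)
Your proof is correct, and it takes a genuinely different and more economical route than the paper. The paper proves this lemma by defining a matrix $N$ from the coefficients $\A,\B,\C,\D$, computing its determinant via Lemma~\ref{lemABCD}, translating the equality of 4-ratios into a M\"obius relation between $\tfrac{1}{\alpha}-n$ and $\tfrac{1}{\alpha'}-n'$, and then, through Claim~\ref{claim} (tracking which subinterval $I_{k+1,i_{k+1}}$ the MP-ratio falls in, by cases on $\epsilon_l\epsilon'_m$) and Claim~\ref{claim for Type}, deducing that the split types agree before finally verifying the 4-ratios match. Your proof sidesteps all of this by observing that the type of the split $\T_k\msp\T_{k+1}$ is a function of $r_k$ alone (Type 1 iff $r_k>\tfrac12$), and that $r_{k+1}=F(r_k)$ with $F(r)=\tfrac{1-r}{r}$ or $\tfrac{r}{1-r}$ accordingly; the lemma is then trivial induction. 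This is cleaner and makes transparent the scale-invariance of the splitting rule, which the paper's argument obscures inside the matrix $N$. The one small thing worth tightening: your claim that irrationality of $\alpha$ rules out $r_k=\tfrac12$ needs a word more, since $r_k=\tfrac12$ is equivalent to $z_k-2x_k=0$ and $w_k-2y_k=0$ simultaneously, which irrationality alone does not exclude; you should invoke $x_kw_k-y_kz_k=\pm1$ from Theorem~\ref{thm:mapT} (or the nondegeneracy of $T(I_{k,i_k})$) to rule out $(z_k,w_k)=2(x_k,y_k)$. With that one sentence added, the argument is complete and strictly simpler than the original.
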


\begin{proof}[Proof of Lemma~\ref{TechLem}]

Define $\A, \B, \C, \D$ by $$\A+\B \alpha+\C \alpha '+\D \alpha \alpha'=
(x_l+y_l\alpha) (z'_m+w'_m\alpha')-(z_l+w_l\alpha)(x'_m+y'_m\alpha').
$$
Since $\T_l$ and $\T'_m$ have the same 4-ratio $$\A+\B \alpha+\C \alpha '+\D \alpha \alpha'=0.$$ 
Using Theorem~\ref{thm:mapT} we may describe $\A, \B, \C, \D$ as follows. 
\begin{eqnarray*}
\A &=&
\epsilon_l \epsilon_m' (-a_l b_m'+b_l a_m')\\
\B &=&
\epsilon_l \epsilon_m' \left((a_l b_m'-b_l a_m')n + (c_l b_m'-d_l a_m')\right)\\
\C &=&
\epsilon_l \epsilon_m' \left((a_l b_m'-b_l a_m')n' + (a_l d_m' - b_l c_m')\right)\\
\D &=&
\epsilon_l \epsilon_m' \left( (a_l b_m'-b_l a_m') nn' + (a_l d_m' - b_l c_m')n + (c_l b_m'-d_l a_m')n'
+ (c_l d_m' - d_l c_m')\right)
\end{eqnarray*}

We have:
\begin{eqnarray*}
\C \alpha'+ \A &=& -\alpha(\D \alpha' +B) \\ 
\frac{1}{\alpha}  &=& -\frac{\D \alpha' +\B}{\C \alpha'+\A} \\ 
\frac{1}{\alpha} -n &=& - \frac{(\D+n\C) \alpha' +(\B+n\A)}{\C \alpha'+\A} \\ 
&=& 
\frac{-(\B+n\A)(\frac{1}{\alpha'}-n') -(\D+n\C)-n'(\B+n\A)}{\A(\frac{1}{\alpha'}-n') +\C+\A n'} \\
&=&
\frac{\epsilon_l\epsilon_m'(d_la_m'-c_lb_m')(\frac{1}{\alpha'}-n') + \epsilon_l\epsilon_m'(c_ld_m'-d_lc_m')}{\epsilon_l\epsilon_m'(-a_l b_m'+b_l a_m')(\frac{1}{\alpha'}-n')+ \epsilon_l\epsilon_m'(a_l d_m' - b_l c_m')}
\end{eqnarray*}
From the last fraction, we define a matrix
$$
N= \epsilon_l\epsilon_m'
\begin{pmatrix}
d_la_m'-c_lb_m' & -d_lc_m'+c_ld_m' \\
b_l a_m'-a_l b_m' & - b_l c_m'+a_l d_m' 
\end{pmatrix}
=
\epsilon_l
\begin{pmatrix}
c_l & d_l\\
a_l & b_l
\end{pmatrix}
\begin{pmatrix}
c'_m & d'_m\\
a'_m & b'_m
\end{pmatrix}
^{-1}
$$
so that the following vectors are parallel:
\begin{equation}\label{eq:parallel}
\begin{pmatrix}
\frac{1}{\alpha} -n\\
1
\end{pmatrix}
\parallel 
N
\begin{pmatrix}
\frac{1}{\alpha'} - n'\\
1
\end{pmatrix}  
\end{equation}
For later use, we note that: 
\begin{equation}\label{eqn:NN}
N 
\begin{pmatrix}
c_m' & d_m'\\
a'_m & b_m' 
\end{pmatrix} 
= 
\epsilon_l
\begin{pmatrix}
c_l & d_l\\
a_l & b_l
\end{pmatrix},
\end{equation}
and the determinant of $N$ is either 1 or -1. 
\begin{equation}\label{det of N}
\det N =(b_l c_l-a_l d_l)(b_m' c_m'-a_m' d_m')^{-1}
\stackrel{(\ref{eq:II})}{=}
\epsilon_l \epsilon_m'\in \{-1, 1\}
\end{equation}

%

By Lemma~\ref{L2}, 
$\alpha \in \bigcap_{k=0}^\infty I_{k,i_k}$ and $\alpha' \in \bigcap_{k=0}^\infty I'_{k,i_k'}$.
By Lemma~\ref{lemABCD}, 
we have
\begin{equation}\label{eq:I'}
I_{l,i_l}= \left\{
\begin{array}{lcccl}
\left (\frac{1}{n+\frac{c_l+d_l}{a_l+b_l}}, \frac{1}{n+\frac{c_l}{a_l}}\right) &\mbox{ thus }
& \frac{c_l}{a_l} < \frac{1}{\alpha}-n < \frac{c_l+d_l}{a_l+b_l}
&\mbox{\rm  if } & \epsilon_l = 1 
\\
\left (\frac{1}{n+\frac{c_l}{a_l}}, \frac{1}{n+\frac{c_l+d_l}{a_l+b_l}}\right)
&\mbox{ thus }
&  \frac{c_l+d_l}{a_l+b_l} < \frac{1}{\alpha}-n <\frac{c_l}{a_l}
& \mbox{\rm if } & \epsilon_l = -1
\end{array}
\right.
\end{equation}
and
\begin{equation}\label{eqn of I'}
I'_{m,i'_m}= \left\{
\begin{array}{lcccl}
\left (\frac{1}{n'+\frac{c_m'+d_m'}{a_m'+b_m'}}, \frac{1}{n+\frac{c_m'}{a_m'}}\right) 
&\mbox{ thus }
& \frac{c_m'}{a_m'} < \frac{1}{\alpha'}-n' < \frac{c_m'+d_m'}{a_m'+b_m'}
&\mbox{\rm  if } & \epsilon_m' = 1 \\
\left (\frac{1}{n'+\frac{c_m'}{a_m'}}, \frac{1}{n'+\frac{c_m'+d_m'}{a_m'+b_m'}}\right)
&\mbox{ thus }
&  \frac{c_m'+d_m'}{a_m'+b_m'} < \frac{1}{\alpha'}-n' < \frac{c_m'}{a_m}& \mbox{\rm if} & \epsilon_m' = -1.
\end{array}
\right.
\end{equation}
By Lemma~\ref{L1}, 
$I_{l, i_l}=\left(\frac{1}{n+\frac{p_l}{q_l}}, \frac{1}{n+\frac{r_l}{s_l}}\right)$ splits into 
$I_{l+1, 2i_l}=\left(\frac{1}{n+\frac{p_l}{q_l}}, \frac{1}{n+\frac{p_l+r_l}{q_l+s_l}}\right)$ and 
$I_{l+1, 2i_l-1}=\left(\frac{1}{n+\frac{p_l+r_l}{q_l+s_l}}, \frac{1}{n+\frac{r_l}{s_l}}\right)$. 
Thus $i_{l+1}=2i_l$ or $2i_l-1$. 
We observe
\begin{itemize}
\item
If $i_{l+1}=2 i_l$ then $\frac{p_l}{q_l}\boxplus\frac{r_l}{s_l}=\frac{p_l+r_l}{q_l+s_l} < \frac{1}{\alpha} -n <  \frac{p_l}{q_l}$.
\item
If $i_{l+1}=2 i_l-1$ then $\frac{r_l}{s_l}< \frac{1}{\alpha} -n <  \frac{p_l}{q_l}\boxplus\frac{r_l}{s_l}=\frac{p_l+r_l}{q_l+s_l}.$
\end{itemize}
Similarly, $i'_{m+1}=2i'_m$ or $2i'_m-1$. 
\begin{itemize}
\item
If $i'_{m+1}=2 i'_m$ then $\frac{p'_m+r'_m}{q'_m+s'_m} < \frac{1}{\alpha'} -n' <  \frac{p'_m}{q'_m}$.
\item
If $i'_{m+1}=2 i'_m-1$ then $\frac{r'_m}{s'_m}< \frac{1}{\alpha'} -n' <  \frac{p'_m+r'_m}{q'_m+s'_m}.$
\end{itemize}

\begin{claim}\label{claim}
The indices $i_{l+1}$ and $i'_{m+1}$ obey the following rule. 
\begin{itemize}
\item
If $\epsilon_l \epsilon_m'=1$ then $i_{l+1}=2i_l$ if and only if $i'_{m+1}=2i'_m$. 
\item
If $\epsilon_l \epsilon_m'=-1$ then $i_{l+1}=2i_l-1$ if and only if $i'_{m+1}=2i'_m$. 
\end{itemize}
\end{claim}

\begin{proof}[Proof of Claim~\ref{claim}]
We have four cases to consider.

\noindent
({\bf Case 1}: $\epsilon_l=\epsilon_m'=1$) 
By (\ref{eqn of I'}) with $\epsilon_m'=1$, we note that
$\frac{p'_m}{q'_m}=\frac{c_m'+d_m'}{a_m'+b_m'}$ and
$\frac{r'_m}{s'_m}=\frac{c_m'}{a_m'}.$ Thus, we have
$i'_{m+1}=2i'_m$ if and only if 
$$
\frac{2c_m'+d_m'}{2a_m'+b_m'} =
\frac{p'_m}{q'_m}\boxplus\frac{r'_m}{s'_m}
< \frac{1}{\alpha'}-n' 
< 
\frac{p'_m}{q'_m}=
\frac{c_m'+d_m'}{a_m'+b_m'}. 
$$
Since $\det N =1 >0$ by (\ref{det of N}) the slopes of the following three vectors satisfy 
$$
{\tt slope} \left(N
\begin{pmatrix}
2c_m'+d_m'\\
2a_m'+b_m'
\end{pmatrix}\right) 
< 
{\tt slope} \left(N
\begin{pmatrix}
\frac{1}{\alpha'}-n' \\
1
\end{pmatrix}\right)
< 
{\tt slope} \left(N
\begin{pmatrix}
c_m'+d_m'\\
a_m'+b_m'
\end{pmatrix}\right).
$$
By (\ref{eq:parallel}) and (\ref{eqn:NN}), we obtain 
$$
\frac{2c_l+d_l}{2a_l+b_l} < \frac{1}{\alpha}-n < \frac{c_l+d_l}{a_l+b_l}. 
$$ 
By (\ref{eq:I'}), with $\epsilon_l=1$ it is equivalent to $i_{l+1}=2i_l$. 

\noindent
({\bf Case 2}: $\epsilon_l=\epsilon_m'=-1$)
We have $i'_{m+1}=2i'_m$ if and only if 
$
\frac{2c_m'+d_m'}{2a_m'+b_m'} < \frac{1}{\alpha'}-n' < \frac{c_m'}{a_m'}.
$
Since $\det N >0$ we obtain  
$
\frac{2c_l+d_l}{2a_l+b_l} < \frac{1}{\alpha}-n < \frac{c_l}{a_l}.
$
It is equivalent to $i_{l+1}=2i_l$. 

\noindent
({\bf Case 3}: $\epsilon_l=-\epsilon_m'=1$) 
We have $i'_{m+1}=2i'_m$ if and only if 
$
\frac{2c_m'+d_m'}{2a_m'+b_m'} < \frac{1}{\alpha'}-n' < \frac{c_m'}{a_m'}.
$
Since $\det N <0$ we obtain
$
\frac{c_l}{a_l} < \frac{1}{\alpha}-n < \frac{2c_l+d_l}{2a_l+b_l}.
$
It is equivalent to $i_{l+1}=2i_l-1$. 

\noindent
({\bf Case 4}: $-\epsilon_l=\epsilon_m'=1$) 
We have $i'_{m+1}=2i'_m$ if and only if 
$
\frac{2c_m'+d_m'}{2a_m'+b_m'} < \frac{1}{\alpha'}-n' < \frac{c_m'+d_m'}{a_m'+b_m'}. 
$
Since $\det N <0$ we have 
$
\frac{c_l+d_l}{a_l+b_l} < \frac{1}{\alpha}-n < \frac{2c_l+d_l}{2a_l+b_l}.
$
It is equivalent to $i_{l+1}=2i_l-1$. 
\end{proof}

It is interesting to note that Claim~\ref{claim} and Lemma~\ref{lem:sign of k+1} imply that the product of the signs is preserved: 
$$\epsilon_{l+1}\epsilon_{m+1}'=\epsilon_l\epsilon_m'.$$

\begin{claim}\label{claim for Type}
The 4-tuples of $\mathcal T_{l+1}$ and $\mathcal T'_{m+1}$ have the same type (Type 1 or Type 2) as stated in the following table.
\end{claim}

\begin{center}
\begin{tabular}{ |l||c|c||c|c| }
\hline
 & Case 1 & Case 2 & Case 3 & Case 4 \\
$(\sgn(\epsilon_l), \ \sgn(\epsilon_m'))$ & $(+, +)$ & $(-, -)$ & $(+, -)$ & $(-, +)$\\
\hline
\hline
$i_{l+1}=2i_l$ & Type 1 & Type 2 & Type 1 & Type 2 \\
\hline
$i_{l+1}=2i_l-1$ & Type 2 & Type 1 & Type 2 & Type 1\\
\hline
\end{tabular}
\end{center}
\begin{proof}
All the eight cases can be checked similarly. 
For example, we check the claim for Case 3 where $i_{l+1}=2i_l-1$. 
By Corollary~\ref{cor:Type and interval}-(2) the 4-tuple of $\mathcal T_{l+1}$ is Type 2. 
Next, Claim~\ref{claim} states $i_{m+1}'=2i_m'$. 
By Corollary~\ref{cor:Type and interval}-(4), the 4-tuple of $\mathcal T'_{m+1}$ is Type~2.  
\end{proof}

Lastly, we will show that $\mathcal T_{l+1}$ and $\mathcal T'_{m+1}$ have the same 4-ratio, which by induction concludes Lemma~\ref{TechLem}.

Since $\T_l$ and $\T'_m$ have the same 4-ratio 
\begin{equation}\label{eq_of_ratio}
(x_l+y_l\alpha) (z'_m+w'_m\alpha')-(z_l+w_l\alpha)(x'_m+y'_m\alpha')=0.
\end{equation} 
Having Claim~\ref{claim for Type} proved, both $\T_{l+1}$ and $\T'_{m+1}$ have the same 4-tuple type, say Type ~1.  
By Proposition~\ref{prop:algorithm of 4-tuple}, their 4-tuples are: 
\begin{eqnarray}\label{eq:next4tuples}
&&(x_{l+1}, y_{l+1}, z_{l+1}, w_{l+1})=(z_l-x_l, w_l-y_l\ ; \ x_l, y_l) 
\nonumber \\
&\mbox{and}& (x'_{m+1}, y'_{m+1}, z'_{m+1}, w'_{m+1})=(z'_m-x'_m, w'_m-y'_m\ ; \ x'_m, y'_m).
\end{eqnarray}
We obtain 
$\frac{x_{l+1}+y_{l+1}\alpha}{z_{l+1}+w_{l+1}\alpha}=\frac{x'_{m+1}+y'_{m+1}\alpha'}{z'_{m+1}+w'_{m+1}\alpha'}$ since
\begin{eqnarray*}
&&(x_{l+1}+y_{l+1}\alpha) (z'_{m+1}+w'_{m+1}\alpha') - (z_{l+1}+w_{l+1}\alpha) (x'_{m+1}+y'_{m+1}\alpha')\\
&\stackrel{(\ref{eq:next4tuples})}{=}& 
- (x_l+y_l\alpha) (z'_m+w'_m\alpha') + (z_l+w_l\alpha) (x'_m+y'_m\alpha') 
\stackrel{(\ref{eq_of_ratio})}{=}0.
\end{eqnarray*}
Thus, $\mathcal T_{l+1}$ and $\mathcal T'_{m+1}$ have the same 4-ratio.
When both the 4-tuples of $\mathcal T_{l+1}$ and $\mathcal T'_{m+1}$ are Type 2, a similar argument holds. 

This concludes Lemma~\ref{TechLem}. 
\end{proof}

Finally we are ready to prove Theorem~\ref{BigThm}.

\begin{proof}[Proof of Theorem~\ref{BigThm}] 
By Lemma \ref{TechLem}, 
$\mathcal{T}_{l+t}$ and $\mathcal{T}_{m+t}'$ have the same 4-tuple types (Type 1 or Type 2) for all $t\geq 1.$

By Corollary~\ref{cor:rel_of_types} their topological types (I, II, I', II') satisfy 
either 
\begin{enumerate}
\item
$\Type(\T_{l+1})=\Type(\T_{m+1})$ or 
\item
$\Type(\T_{l+1})=\Type(\T_{m+1})'$. 
\end{enumerate}
The system of Types I, II, I', II' is closed under maximal splitting operations and follow the rule as described in Figure~\ref{figure-121'2'}. In particular, the diagram in Figure~\ref{figure-121'2'} is mirror symmetric with respect to a vertical axis. 
Thus, in Case (1) the I-II-I'-II'-sequences corresponding to $\T_{l+1} \msp \T_{l+2} \msp \cdots$ and $\T'_{m+1} \msp \T'_{m+2} \msp \cdots$ are exactly the same and in Case (2) exactly the same up to simultaneously putting $'$. Namely,  
\begin{enumerate}
\item
$\Type(\T_{l+t})=\Type(\T_{m+t})$ for all $t\geq 1$ or 
\item
$\Type(\T_{l+t})=\Type(\T_{m+t})'$ for all $t\geq 1$. 
\end{enumerate}
Lemma \ref{TechLem} also states that $\mathcal{T}_{l+t}$ and $\mathcal{T}_{m+t}'$ have the same 4-ratios. 
Therefore, for each case (Case (1) or (2)), there exists a homeomorphism $\phi \in {\rm Homeo}^+(D_3)$ such that for all $t\geq0,$
\begin{enumerate}
\item
$\phi(\mathcal{T}_{l+t})=\mathcal{T}_{m+t}'$ for all $t\geq 1$, or 
\item
$\phi(\mathcal{T}_{l+t})={\tt mirror}( \mathcal{T}_{m+t}')$ for all $t\geq 1$. 
\end{enumerate}
In Case (1), $\beta$ and $\beta'$ have equivalent Agol-cycles and in Case (2) they have mirror equivalent Agol cycles. 
\end{proof}

\section{Dilatation is preserved under 3-braid flypes}

The goal of this section is to prove Theorem~\ref{thm:dilatation}, which states that the dilatation is preserved under flype moves. 
In addition, as byproducts of this section, we describe how to compute the transition matrix of a given 3-braid (thus the dilatation) in the proof of Lemma~\ref{Matrices}.

\subsection{Birman-Menasco's classification of 3-braids} 

We briefly review well-known facts on 3-braid flypes that are relevant to this paper. 

\begin{defn}\label{def:flype}
Let $\varepsilon = \pm 1$ and $x, y, z \in \mathbb{Z}$. 
If $\beta=\sigma_1^x \sigma_2^{\varepsilon} \sigma_1^{y} \sigma_2^{z}$ and $\beta'=\sigma_1^x \sigma_2^{z} \sigma_1^{y} \sigma_2^{\varepsilon}$,
then we say that $\beta$ and $\beta'$ are related by an {\em $\varepsilon$-flype.}
See Figure \ref{fig3}.
If $\beta$ and $\beta'$ are conjugate (resp. not conjugate), then the flype is called {\em degenerate} (resp. {\em non-degenerate}). 
\end{defn}

\begin{figure}[h]
\includegraphics[height=4.0cm]{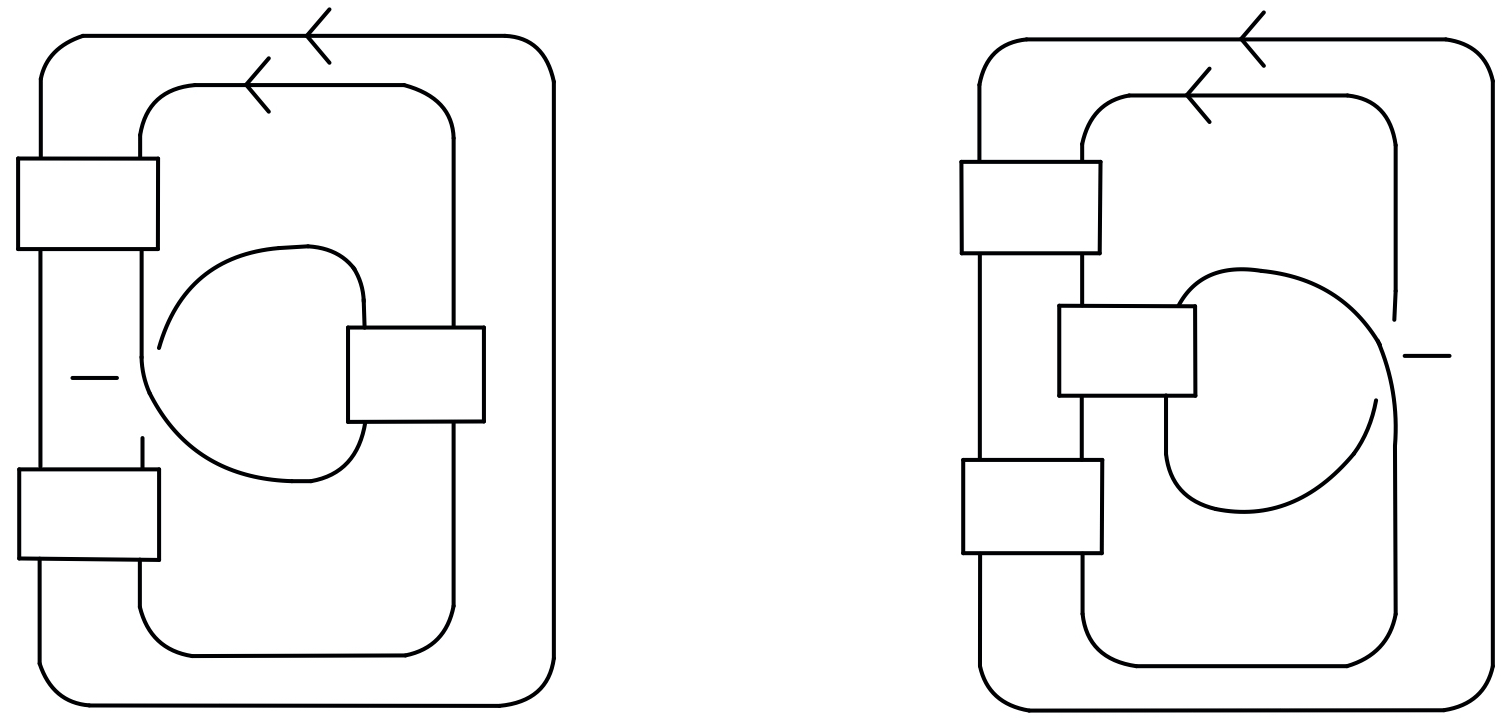} 
\put(-265, 55){\fontsize{13}{11}$\hat{\beta}=$}
\put(-115, 55){\fontsize{13}{11}$\hat{\beta'}=$}
\put(-228, 79){\fontsize{13}{11}$x$}
\put(-80, 78){\fontsize{13}{11}$x$}
\put(-228, 31){\fontsize{13}{11}$y$}
\put(-80, 32){\fontsize{13}{11}$y$}
\put(-178, 52){\fontsize{13}{11}$z$}
\put(-65, 55){\fontsize{13}{11}$z$}
\caption{(Definition~\ref{def:flype}) The braid closures $\hat{\beta}$ and $\hat{\beta'}$ obtained from a negative flype.}
\label{fig3}
\end{figure}

Non-degenerate flypes play a significant role in knot theory. 
Flypes are used in the classification of 3-braids in Birman and Menasco's work \cite{BM}, in Markov's Theorem without Stabilization \cite{BM2}, and the classification of transversally simple knots \cite{BMII, BM08}. 
Many other transversally simple knots admit a negative flype like those found by Etnyre and Honda \cite{EH} (cf. Matsuda and Menasco \cite{MM}) and Ng, Ozsv\'ath, and Thurston \cite{NOT}. 
In the Tait flype conjecture, Thistlethwaite and Menasco proved that two reduced alternating diagrams of an alternating link are related by a sequence of flypes \cite{MT}.

It is easy to see that a flype move preserves the topological link type of the braid closure. 
On the contrary, the following Birman and Menasco's 3-braid classification theorem  states that a flype changes the conjugacy class in general. 
\begin{thm}\cite{BM} \label{BM-3-braid-theorem}
Let $\mathcal L$ be a link type of braid index three. Then one of the following holds:
\begin{enumerate}
\item
There exists a unique conjugacy class of 3-braid representatives of $\mathcal L$. 
\item
There exists two conjugacy classes of 3-braid representatives of $\mathcal L$.  
\end{enumerate}
Case $(2)$ happens if and only if $\mathcal L$ has a 3-braid representative that admits a non-degenerate flype. 
\end{thm}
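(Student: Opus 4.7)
The plan is to combine Markov's theorem with Birman--Menasco's braid foliation machinery, specialized to braid index $3$. First I would set up the following reduction. Given two 3-braid representatives $\beta,\beta'$ of the same link type $\mathcal L$, Markov's theorem guarantees they are related by a finite sequence of conjugations and (positive/negative) stabilization-destabilization moves. Since $\beta$ and $\beta'$ both have braid index $3$, any stabilization must be undone somewhere in the sequence. The goal is to replace this sequence by one that avoids temporarily increasing the braid index, leaving only conjugation and (possibly) one additional move per step.

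Next I would invoke the braid foliation technology from \cite{BM,BM2}. Consider an annulus $A$ embedded in $S^3 \setminus (\text{braid axis})$ realizing the isotopy from $\hat\beta$ to $\hat{\beta'}$, and put $A$ in general position with respect to the fibration of the complement of the braid axis. This induces a singular foliation on $A$ whose structure is controlled by an essentially combinatorial invariant (the tiling by $bb$-, $ab$-, $aa$-tiles). The classification of such tilings on an annulus with boundary in a 3-braid closure is highly restricted: after simplification moves that do not alter the conjugacy class, the only residual simplification that can change the braid word up to conjugacy is the \emph{exchange move}, and for braid index $3$ any exchange move is equivalent (up to conjugation) to a flype. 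This is the crux that shrinks the infinite-looking list of moves down to conjugation plus flypes.

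Third, I would show that the flype relation partitions the 3-braid representatives of $\mathcal L$ into at most two conjugacy classes. A flype applied to $\sigma_1^x\sigma_2^\varepsilon\sigma_1^y\sigma_2^z$ exchanges the roles of $\varepsilon$ and $z$; applying another flype to the result either returns to the original (if $\varepsilon$ and $z$ already play symmetric roles, i.e., the degenerate case) or produces a word already conjugate to one of the previous two. Combined with the previous step, this gives the upper bound of two conjugacy classes. The characterization of case (2) follows immediately from the definition: having two classes means some flype was non-degenerate, and conversely a non-degenerate flype produces a second class by definition.

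The main obstacle will be the second step: certifying that the braid foliation analysis really does bottom out with only conjugation and flypes in the braid index $3$ setting. In general, braid foliations admit a zoo of local moves (destabilizations, exchange moves, cyclings, microflypes), and showing that for $n=3$ all of these either preserve the conjugacy class or reduce to a flype requires the delicate tile-counting and Euler characteristic arguments that fill Birman--Menasco's original series. The rest of the proof is bookkeeping once that reduction is in hand.
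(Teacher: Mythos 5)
The paper does not prove this statement; it is quoted verbatim from Birman--Menasco \cite{BM} as an external result, with no argument given. So there is no ``paper proof'' to compare against, and your task here is really to audit whether your sketch is a faithful outline of the Birman--Menasco argument.

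Your outline captures the broad contour of the original proof (Markov's theorem, an isotopy annulus, the induced braid foliation, reduction to a short list of local moves), but one central claim needs repair: you assert that ``for braid index $3$ any exchange move is equivalent (up to conjugation) to a flype,'' and treat this as the engine producing the second conjugacy class. In the 3-strand setting an exchange move has the form $\sigma_1^a\sigma_2\sigma_1^b\sigma_2^{-1}\mapsto\sigma_1^a\sigma_2^{-1}\sigma_1^b\sigma_2$, which is exactly a flype with $(\varepsilon,z)=(1,-1)$; but by the Ko--Lee criterion (Theorem~\ref{theorem:Ko-Lee}), flypes with $|z|<2$ are always degenerate. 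So exchange moves for $3$-braids never change the conjugacy class, and they cannot by themselves account for case (2). The genuinely non-conjugate representatives come from a different source in the braid foliation analysis --- the move Birman--Menasco isolate is a bona fide flype with $|z|\ge 2$ arising from a specific tiling configuration on the isotopy annulus, not from an exchange. Your step three also understates the work: showing the flype orbit contains at most two conjugacy classes requires the algebraic normal-form analysis of $B_3$ (or Murasugi's classification) rather than the ``another flype returns you to the start'' heuristic, since a priori iterated flypes and conjugations could proliferate. As you anticipate in your closing paragraph, the weight of the proof lives in certifying the short list of moves, and that is precisely where your sketch currently conflates two distinct operations.
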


Moreover, Ko and Lee determine all the non-degenerate flypes. 

\begin{thm}\cite[Theorem 5]{Ko-Lee} \label{theorem:Ko-Lee}
The 3-braids $\beta=\sigma_1^x \sigma_2^{\varepsilon} \sigma_1^{y} \sigma_2^{z}$ and $\beta'=\sigma_1^x \sigma_2^{z} \sigma_1^{y} \sigma_2^{\varepsilon}$ have distinct conjugacy classes if and only if 
\begin{itemize}
\item
Neither $x$ nor $y$ is equal to $0, \, \varepsilon, \, 2\varepsilon$ or $z+\varepsilon$,
\item
$x \neq y$, and
\item
$|z| \geq 2$.
\end{itemize}
\end{thm}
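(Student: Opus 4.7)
The plan is to prove both directions of the biconditional separately. Write the two braids as $\beta = \sigma_1^x \sigma_2^\varepsilon \sigma_1^y \sigma_2^z$ and $\beta' = \sigma_1^x \sigma_2^z \sigma_1^y \sigma_2^\varepsilon$, so the flype is precisely the swap of the two $\sigma_2$-exponents while leaving the $\sigma_1$-blocks fixed. The role of the three conditions is to prevent any of the many ``degeneracies" that would allow one $\sigma_2$-block to be absorbed into or slid past its neighbours.

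For the ``only if" direction, I would establish the contrapositive: if any listed condition fails, then $\beta \sim \beta'$. The trivial failures admit short arguments. When $y=0$, both words collapse to $\sigma_1^x \sigma_2^{z+\varepsilon}$; when $z=\varepsilon$, the two words are literally equal; when $x=0$ or $x=y$, the two four-block words are cyclic rearrangements of each other, and a cyclic rearrangement is realized by a conjugation; when $z\in\{0,-\varepsilon\}$ the situation reduces to the previous cases after one application of the braid relation. The remaining arithmetic failures $x\in\{\varepsilon,2\varepsilon,z+\varepsilon\}$ and their symmetric counterparts for $y$ are the real content: here I would apply the braid relation in the form $\sigma_1^\varepsilon \sigma_2^\varepsilon \sigma_1^\varepsilon = \sigma_2^\varepsilon \sigma_1^\varepsilon \sigma_2^\varepsilon$ to slide one unit of $\sigma_1^\varepsilon$ across the central $\sigma_2^\varepsilon$, after which a cancellation with the adjacent $\sigma_2^z$ (or with the $\sigma_1^y$ block) produces the same cyclic word regardless of whether one started from $\beta$ or $\beta'$.

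For the ``if" direction, when all three conditions are satisfied I need a complete conjugacy invariant of $B_3$ that distinguishes $\beta$ from $\beta'$. A clean choice is the Garside/Murasugi cyclic normal form: every 3-braid is conjugate to a unique cyclic-word equivalence class of the form $\Delta^{2k}\sigma_1^{p_1}\sigma_2^{-q_1}\cdots\sigma_1^{p_r}\sigma_2^{-q_r}$ with $p_i,q_i>0$. The listed conditions are tailored precisely so that the Murasugi reduction of each of $\beta$ and $\beta'$ terminates after the obvious sign-absorption and yields a four-block cyclic word; the only way these two cyclic words could agree would be a cyclic symmetry that swaps the two $\sigma_2$-blocks, and this is ruled out by $x\neq y$ together with the excluded values of $x$ and $y$. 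An alternative route, more in the spirit of this paper, is to work inside $B_3/Z(B_3)\cong \mathrm{PSL}(2,\mathbb Z)$ and, for the pseudo-Anosov case, read off the cyclic $\mathbf{LR}$-word on the axis in the Farey tessellation; the reducible and periodic pairs that can arise here can be enumerated explicitly and checked by hand.

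The main obstacle will be the ``only if" case analysis, and specifically the arithmetic exception $x=z+\varepsilon$ (together with its symmetric counterpart $y=z+\varepsilon$). Here the required conjugation is not a single cyclic shift but a cascade of braid relations that shuffles $\sigma_1$-exponents past $\sigma_2$-exponents, and one must track signs with care because a single miscount turns the attempted conjugation into a different braid. A secondary difficulty lives in the ``if" direction: one must verify that whichever normal-form invariant is invoked is genuinely sensitive to the ordering of $\varepsilon$ and $z$ along the word, and this is exactly where the hypothesis $|z|\geq 2$ is used, since it prevents $\sigma_2^z$ from being absorbed into a neighbouring block by the braid relation and therefore preserves the cyclic ``shape" of the four-block word under the Murasugi reduction.
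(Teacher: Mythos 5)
This statement is not proved in the paper at all: it is quoted verbatim from Ko--Lee \cite{Ko-Lee}, where it is established using a complete solution of the conjugacy problem in $B_3$ (band-generator/Garside canonical forms and a full reduction of both words to normal form). So the only question is whether your outline stands on its own, and as written it does not: it is a plan in which both halves of the real content are deferred. For the ``degenerate'' direction you correctly dispose of $y=0$, $z=\varepsilon$, $x=0$, $x=y$, $z\in\{0,-\varepsilon\}$, but the cases $x\in\{\varepsilon,2\varepsilon,z+\varepsilon\}$ (and the symmetric ones for $y$) are exactly the nontrivial ones, and ``slide one unit of $\sigma_1^{\varepsilon}$ across the central $\sigma_2^{\varepsilon}$ and cancel'' is not a proof. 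For instance, with $\varepsilon=1$, $x=1$, applying $\sigma_1\sigma_2\sigma_1=\sigma_2\sigma_1\sigma_2$ and cycling turns $\beta$ into $\sigma_1\sigma_2\sigma_1^{y-1}\sigma_2^{z+1}$ and $\beta'$ into $\sigma_1^{y+1}\sigma_2\sigma_1\sigma_2^{z-1}$, which are not visibly the same cyclic word; an actual conjugating element (or a normal-form computation) must be produced, and you explicitly flag this step as ``the main obstacle'' rather than resolving it.

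The ``if'' direction has the same problem in a sharper form. Conjugacy in $B_3$ is not cyclic rearrangement of a chosen word, so the assertion that ``the only way these two cyclic words could agree would be a cyclic symmetry that swaps the two $\sigma_2$-blocks'' does not follow from anything you have written: you must actually carry $\beta$ and $\beta'$ (with all sign patterns of $x,y,z$ and $\varepsilon=\pm1$) into the Murasugi/Garside normal form and verify that under the three hypotheses the resulting cyclic normal words differ. That reduction with mixed signs is precisely the bulk of Ko--Lee's argument, and it is also where $|z|\ge 2$ and the exclusions $x,y\notin\{0,\varepsilon,2\varepsilon,z+\varepsilon\}$ enter quantitatively; invoking the normal form without performing the reduction, or alternatively gesturing at the $\mathrm{PSL}(2,\mathbb Z)$/Farey picture with ``the reducible and periodic pairs \dots can be checked by hand,'' leaves the theorem unproved. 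In short, the skeleton is reasonable and consistent with how such results are proved, but both the degenerate exceptional cases and the non-conjugacy verification are genuine gaps, not routine details.
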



\subsection{Dilatation is preserved under flypes}\label{subsec:dilatation}

Now we state the main result of this section. 
\begin{thm}\label{thm:dilatation}
There are infinitely many integers 
$x, y$ and $z$, such that 
the braids $\beta = \sigma_1^x \sigma_2^{-1} \sigma_1^{y} \sigma_2^z$ and 
$\beta'=\sigma_1^x \sigma_2^z \sigma_1^y \sigma_2^{-1}$ 
belong to distinct conjugacy classes but 
have the same dilatation  
\[\lambda = \frac{1}{2}(\gamma + \sqrt{\gamma^2-4})\]
where 
\begin{equation}\label{eq:trace}
\gamma=\gamma(x,y,z)=\sgn(xyz)(-2 -x-y+xz+yz +xyz).
\end{equation}
\end{thm}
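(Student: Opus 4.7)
The plan is to split the statement into two independent pieces and handle each with a separate tool already available in the paper. For the non-conjugacy assertion, I would appeal directly to Ko and Lee's Theorem~\ref{theorem:Ko-Lee} with $\varepsilon = -1$: $\beta$ and $\beta'$ lie in distinct conjugacy classes of $B_3$ precisely when $x, y \notin \{0, -1, -2, z-1\}$, $x \neq y$, and $|z| \geq 2$. These inequalities cut out an infinite sublattice of $\Z^3$, so the existence of infinitely many $(x,y,z)$ is free. One still has to verify that for such parameters the braids are in fact pseudo-Anosov (not periodic or reducible); this can be done by invoking the Nielsen--Thurston trichotomy together with the fact that $\gamma^2 - 4 > 0$ will not be a perfect square for generic $(x,y,z)$, so that the eigenvalue of the transition matrix is irrational and hence cannot come from a periodic or reducible class.

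For the dilatation, I would lean on the framework set up in Proposition~\ref{prop:unique-alpha} and the construction promised in Lemma~\ref{Matrices}: a pseudo-Anosov 3-braid acts on the two-dimensional weight space of its Markov train track by a matrix in $GL_2(\Z)$ whose Perron eigenvalue is the dilatation. Decompose each braid word into its syllables, assigning to $\sigma_1^k$ a matrix $A_k$ and to $\sigma_2^k$ a matrix $B_k$ (these will be explicit integer matrices whose entries are linear in $k$). Then
\[
M_\beta = A_x\, B_{-1}\, A_y\, B_z, \qquad M_{\beta'} = A_x\, B_z\, A_y\, B_{-1}.
\]
Since $\det M_\beta, \det M_{\beta'} \in \{\pm 1\}$ and the Perron eigenvalue satisfies $\lambda + \lambda^{-1} = \mathrm{tr}$, it suffices to show that both traces equal the quantity $\gamma$ defined in (\ref{eq:trace}).

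The heart of the proof is therefore the identity $\mathrm{tr}(M_\beta) = \mathrm{tr}(M_{\beta'}) = \gamma$. I would expand both four-fold products symbolically and compare. The coincidence is not a formal consequence of cyclicity of trace (cycling does not directly swap the positions of $B_{-1}$ and $B_z$), but is rather a small algebraic miracle that comes from the specific shapes of $A_k$ and $B_k$ in the standard train-track basis; in particular, the quadratic interaction terms between the $A$'s and $B$'s rearrange symmetrically. The $\sgn(xyz)$ prefactor in $\gamma$ arises because the literal trace of $M_\beta$ may come out as $-\gamma$ depending on the signs of the exponents; one fixes the sign by requiring $\mathrm{tr} > 2$, as must hold for a pseudo-Anosov Perron eigenvalue. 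Once traces agree, $\lambda$ is the larger root of $t^2 - \gamma t + 1 = 0$, giving the stated formula. I expect the main technical obstacle to be exactly this trace verification: it is a bounded calculation, but it is the only step where the flype symmetry is used and nothing lighter than an explicit expansion seems to work.
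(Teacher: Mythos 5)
Your overall strategy is the paper's: Ko--Lee (Theorem~\ref{theorem:Ko-Lee}) for non-conjugacy, and equal traces of train-track transition matrices (Lemma~\ref{Matrices}) for equal dilatation. But the uniform factorization $M_\beta = A_x B_{-1} A_y B_z$ would not survive execution: in the paper, the matrix assigned to a syllable $\sigma_i^k$ depends not only on $k$ but on whether the invariant train track entering and leaving that syllable is Type M or Type W, and that bookkeeping is governed by the sign pattern $(\sgn x, \sgn y, \sgn z)$. This is exactly why Lemma~\ref{Matrices} works through eight cases; the syllable matrices that appear are $B^{x-1}T$, $V$, $A^{z-1}V$, $B^y$, $T$, etc., not a single linear family $A_k$, $B_k$. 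Your instinct that the trace equality is structural rather than a cyclicity identity is correct and can be made precise: in Case~1, writing $M_1 = PQRS$ and $M_1' = RQPS$ with $P = A^{z-1}V$, $R = V$ lower-triangular of parameter $p$, $r$ and $Q = B^{y-1}T$, $S = B^{x-1}T$ upper-triangular of parameter $q$, $s$, one computes
\[
\tr(PQRS) = \tr(RQPS) = 2 - pq - qr - rs - ps + pqrs,
\]
which is manifestly symmetric under $P \leftrightarrow R$. A few smaller points: you need $\det M = +1$, not merely $\pm 1$, to land on $t^2 - \gamma t + 1 = 0$, and the paper checks this; your worry about $\gamma^2-4$ being a perfect square is moot, since $(\gamma - m)(\gamma + m) = 4$ has no integer solution with $\gamma > 2$, so the Perron eigenvalue is automatically irrational; and the paper's transition-matrix formulas are only valid for $x, y, z$ sufficiently large (cf.\ Remark~\ref{rem_on_Case1}), a restriction you should fold into the choice of the infinite sublattice.
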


The theorem is an immediate consequence of Theorem~\ref{theorem:Ko-Lee} and  Lemma~\ref{Matrices} below.  

We need three lemmas to prove Theorem~\ref{thm:dilatation}.
We start by analyzing behavior of train tracks under the braid generators $\s_1$ and $\s_2$, and compute transition matrices.

\begin{lem}\label{lemma0}
Measured train tracks are affected by $\sigma_1^{\pm 1}$ and $\sigma_2^{\pm1}$ as shown in Figures \ref{fig1} and \ref{fig2}. 
The matrices between train track types are defined below. They designate how the labels change after the application of $\sigma_i^{\pm1}$. 
\begin{eqnarray*}
&& A=\begin{pmatrix} 1 & 0 \\ 1 & 1 \end{pmatrix}, \quad A^{-1}=\begin{pmatrix} 1 & 0 \\ -1 & 1 \end{pmatrix},\quad  V=\begin{pmatrix} 1 & 0 \\ 1 & -1 \end{pmatrix}, \label{def_of_matrices}\\
&& B=\begin{pmatrix} 1 & 1 \\ 0 & 1 \end{pmatrix}, \quad B^{-1}=\begin{pmatrix} 1 & -1 \\ 0 & 1 \end{pmatrix}, \quad T=\begin{pmatrix} -1 & 1 \\ 0 & 1 \end{pmatrix}. 
\label{def_of_matrices2}
\end{eqnarray*}
Note $T B^{-1} = B T$ and $VA^{-1}=AV$. 
\end{lem}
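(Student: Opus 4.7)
The plan is to establish the transformation rules by direct geometric computation on the 3-punctured disk. I would set up conventions for the four standard train tracks $\M_\alpha, \M'_\alpha, \W_\alpha, \W'_\alpha$ from Proposition~\ref{prop:unique-alpha}, fixing which edges carry the weights labeled $a$ and $b$ in the $\M(a,b)$ and $\W(a,b)$ notation, and placing the three punctures in a standard horizontal arrangement so that $\sigma_1$ swaps the left two punctures and $\sigma_2$ swaps the right two.

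For each pair (generator $\sigma_i^{\pm 1}$, incoming train track type), I would: (i) draw the image of the train track under the corresponding half twist, (ii) perform an isotopy (and splitting/combing if needed) to return it to one of the four standard forms, and (iii) read off the new weights as integer linear combinations of the old weights via the switch conditions $a = b+c$ at each vertex. Each step produces a $2 \times 2$ integer matrix acting on the column vector $(a,b)^{\top}$, and these are exactly the six matrices $A, A^{-1}, V, B, B^{-1}, T$ claimed in the statement. In particular, the positive twists $\sigma_1, \sigma_2$ that preserve the M- or W-type should produce the determinant $+1$ matrices $A$ and $B$ (shearing weights), while the moves that swap M-type with W-type (or interchange the roles of $a$ and $b$) should produce the determinant $-1$ matrices $V$ and $T$.

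Finally, the two identities $TB^{-1} = BT$ and $VA^{-1} = AV$ are immediate from multiplying the $2\times 2$ matrices:
\[
TB^{-1} = \begin{pmatrix} -1 & 1 \\ 0 & 1 \end{pmatrix}\begin{pmatrix} 1 & -1 \\ 0 & 1 \end{pmatrix} = \begin{pmatrix} -1 & 2 \\ 0 & 1 \end{pmatrix} = \begin{pmatrix} 1 & 1 \\ 0 & 1 \end{pmatrix}\begin{pmatrix} -1 & 1 \\ 0 & 1 \end{pmatrix} = BT,
\]
and similarly for $VA^{-1}=AV$. These commutation relations reflect the geometric consistency between two different ways of returning to standard form after the same twist, so they also serve as a sanity check on the earlier case analysis.

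The main obstacle I expect is bookkeeping rather than ideas. A single half twist can send an M-type train track to a W-type (or vice versa), and after the twist one must isotope carefully, keeping track of whether the roles of $a$ and $b$ have swapped. This is the source of the sign changes in $V$ and $T$, and getting the assignment of matrices to the correct (type-in, generator, type-out) triples demands that the conventions in Figure~\ref{fig4} be respected throughout. Beyond this, the argument is a finite check: six generators times four starting types, with each case reducing to drawing the twisted picture and reading off two linear relations from the switch condition.
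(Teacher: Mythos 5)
Your proposal matches the paper's proof essentially exactly: the paper also proceeds by direct geometric computation, applying each generator to a standard train track, ``unzipping'' and ``zipping'' (what you call splitting/combing and isotopy) to return to standard form, and reading off the resulting $2\times2$ matrix, with the case split on $a<b$ versus $a>b$ producing the different branches (e.g.\ $T$ versus $B^{-1}$). The only minor slip is your count at the end — there are four generators $\sigma_1^{\pm1}, \sigma_2^{\pm1}$ acting on two types (M and W), not six times four — but this does not affect the correctness of the approach.
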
 

\begin{proof}
In this proof, we unzip and zip the edges of the train track, which is an operation where we separate or condense, respectively, the edges according to the assigned weights. More details about this operation can be found in \cite{Primer}.

We consider the case of $\sigma_1$ acting on $\M(a,b)$. 
\[
\includegraphics[height=1.9cm]{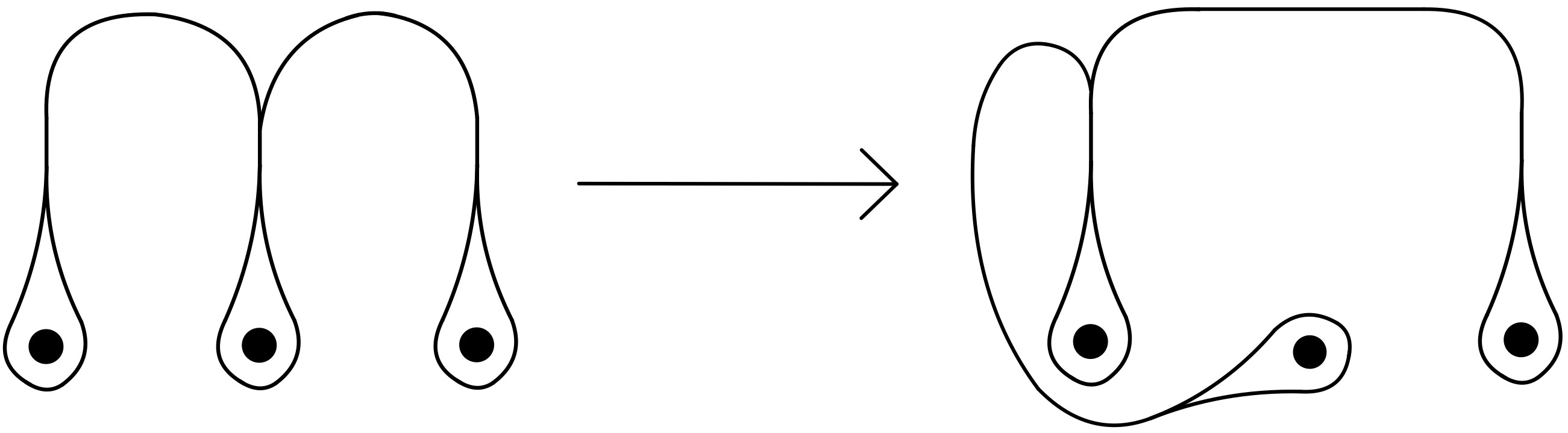}
\put(-200,40){$a$}
\put(-133,40){$b$}
\put(-110,35){$\sigma_1$}
\put(-80,10){$a$}
\put(-33,40){$b$}
\] 

If $a<b$, then $2a < a+b$ and we unzip the train track along $a$.
\[
\includegraphics[height=2.0cm]{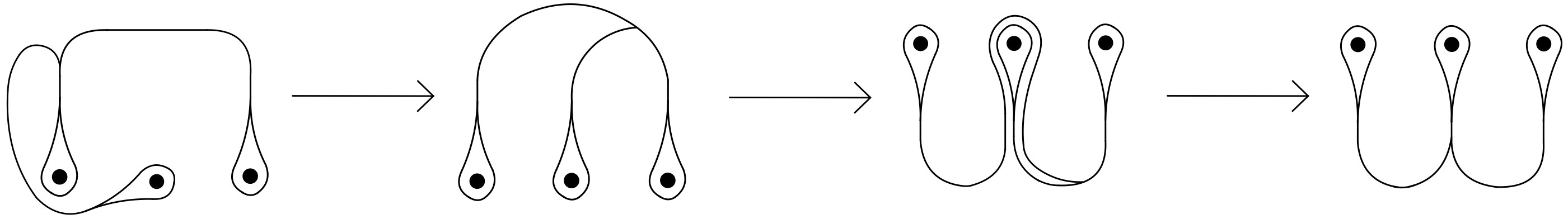}
\put(-413,30){$a$}
\put(-368,38){$b$}
\put(-333,40){Unzip $a$}
\put(-280,60){$-a+b$}
\put(-265,40){$a$}
\put(-220,40){Isotopy}
\put(-195,2){$-a+b$}
\put(-140, 2){$a$}
\put(-95,40){Zip}
\put(-65,-5){$-a+b$}
\put(-20,-5){$b$}
\] 
Notice that the labels have changed by an application of the matrix $T$. 

If $a>b$, then $a+b > 2b$ and we unzip the train track along $b$. 
\[
\includegraphics[height=1.9cm]{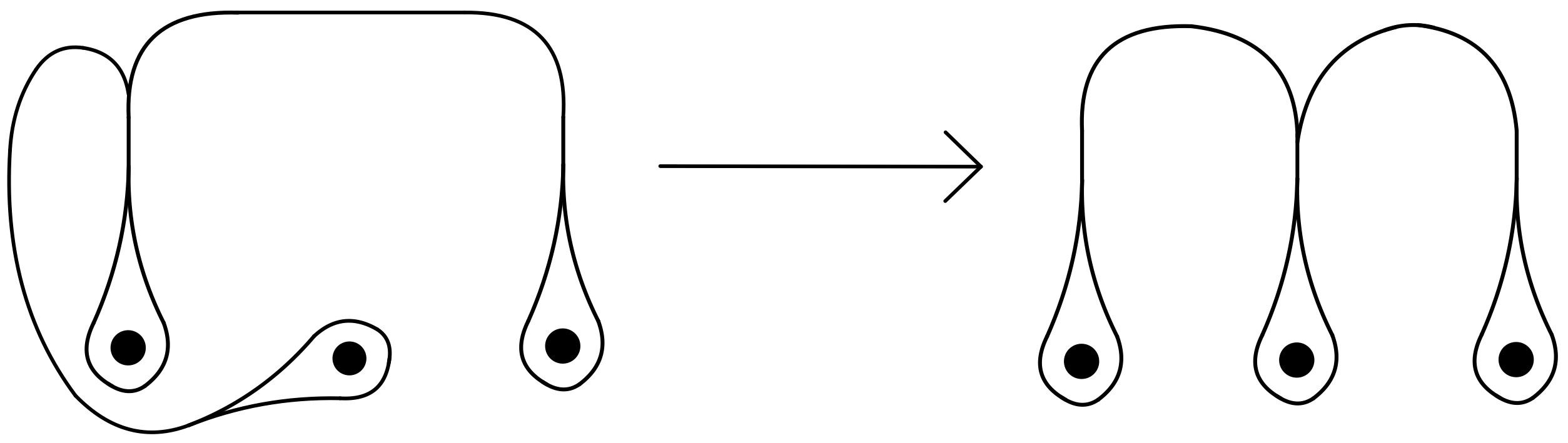}
\put(-200,30){$a$}
\put(-133,40){$b$}
\put(-110,40){Unzip $b$}
\put(-60,55){$a-b$}
\put(-20,55){$b$}
\] 
Notice that the labels have changed by an application of the matrix $B^{-1}$. 

All other cases follow similarly. 
\end{proof}

\begin{figure}[h]
\includegraphics[height=10cm]{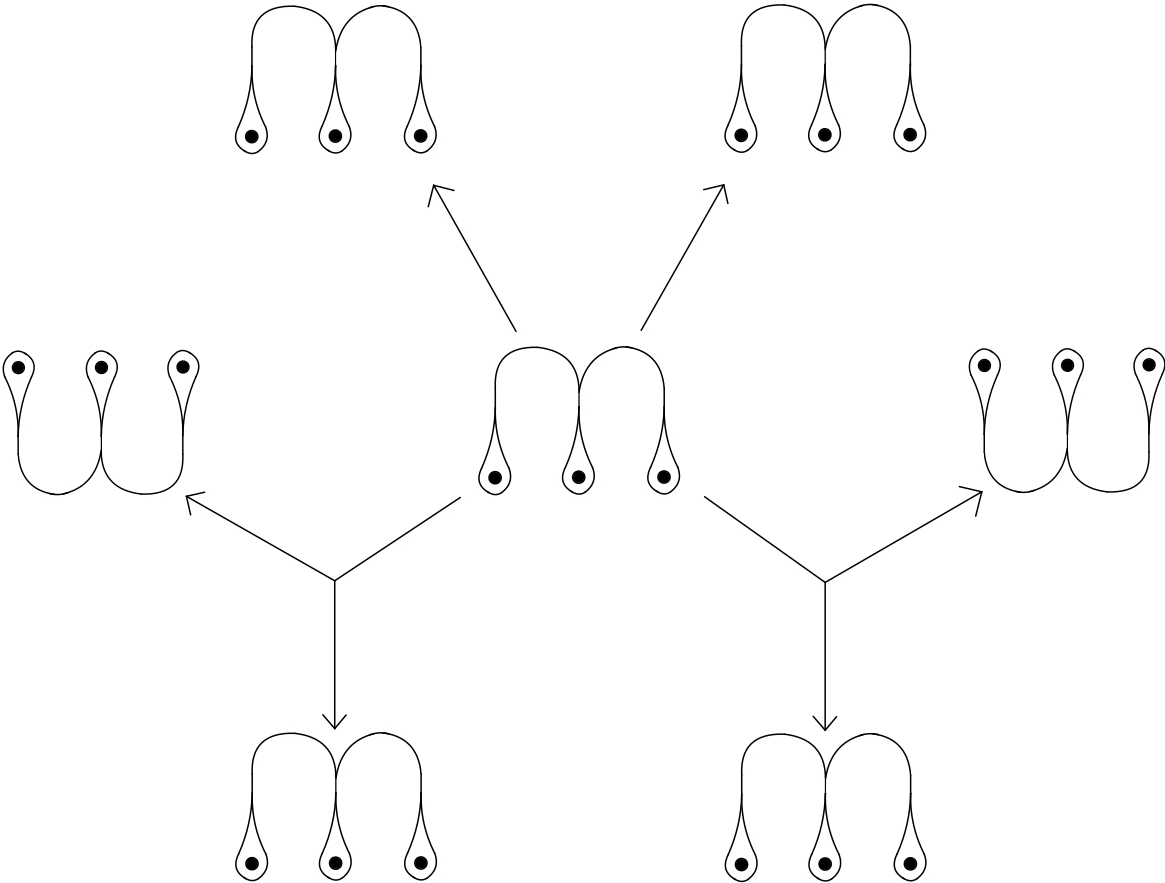}
\put(-145,33){$a$} 
\put(-80,33){$-a+b$}
\put(-320, 33){$a-b$} 
\put(-235, 33){$b$}
\put(-380, 115){$-a+b$} 
\put(-330, 115){$b$}
\put(-50, 115){$a$} 
\put(-25, 115){$a-b$}
\put(-207, 160){$a$} 
\put(-177, 160){$b$}
\put(-145, 270){$a$} 
\put(-80, 270){$a+b$}
\put(-320, 270){$a+b$} 
\put(-235, 270){$b$}
\put(-245, 200){$\sigma_1^{-1}$}
\put(-220, 200){$B$}
\put(-150, 200){$\sigma_2$}
\put(-167, 200){$A$}
\put(-255, 117){$\sigma_1$}
\put(-135, 117){$\sigma_2^{-1}$}
\put(-95, 110){$V$}
\put(-90, 100){$a>b$}
\put(-130, 70){$A^{-1}$}
\put(-105, 70){$a<b$}
\put(-285, 110){$T$}
\put(-312, 100){$a<b$}
\put(-290, 70){$B^{-1}$}
\put(-265, 70){$a>b$}
\caption{(Lemma \ref{lemma0}) Action of $\sigma_1^{\pm 1}$ and $\sigma_2^{\pm1}$ on Type M}
\label{fig1}
\end{figure}

\begin{figure}[h]
\includegraphics[height=10cm]{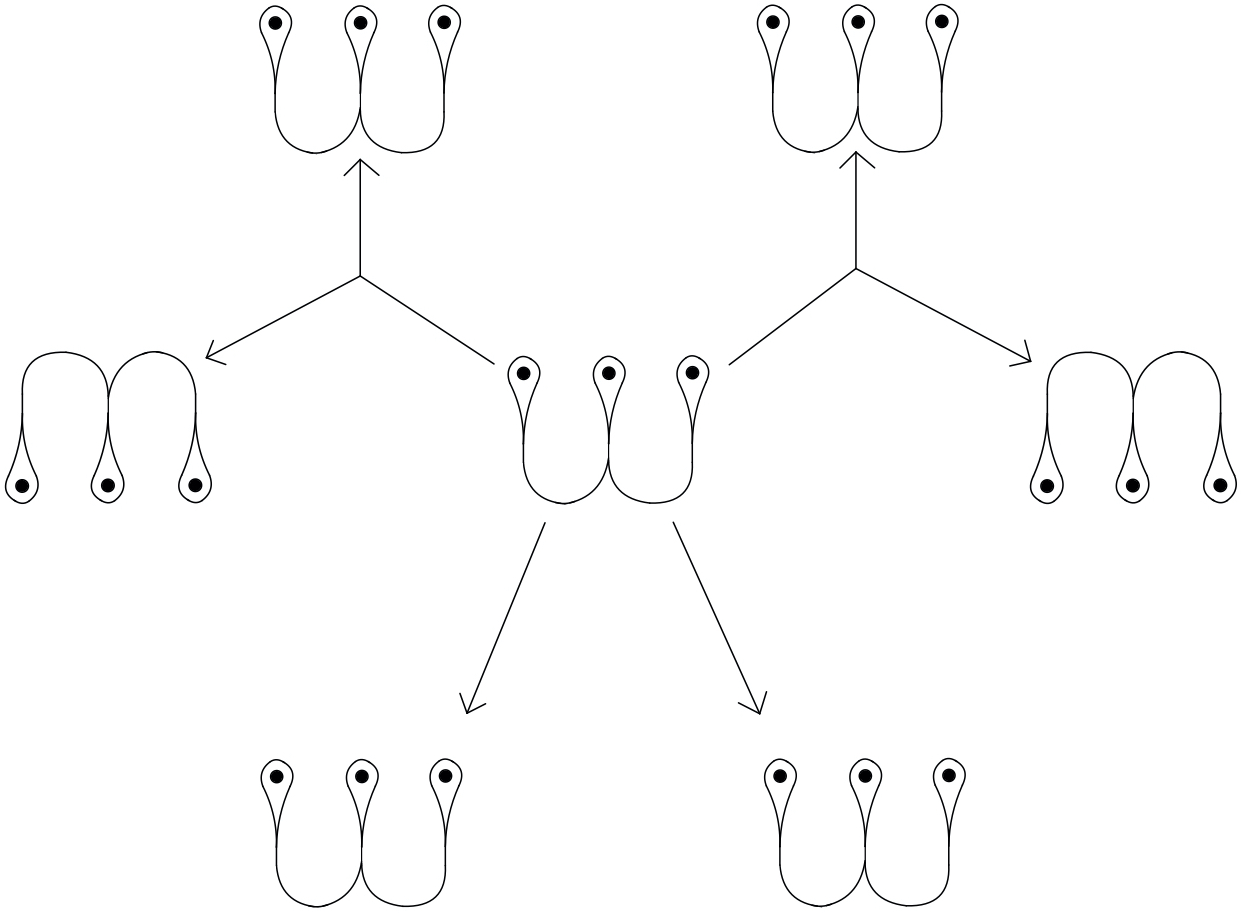}
\put(-155,13){$a$} 
\put(-90,13){$a+b$}
\put(-330, 13){$a+b$} 
\put(-245, 13){$b$}
\put(-390, 180){$-a+b$} 
\put(-340, 180){$b$}
\put(-55, 180){$a$} 
\put(-30, 180){$a-b$}
\put(-215, 133){$a$} 
\put(-185, 133){$b$}
\put(-155, 250){$a$} 
\put(-90, 250){$-a+b$}
\put(-330, 250){$a-b$} 
\put(-245, 250){$b$}
\put(-245, 90){$\sigma_1$}
\put(-225, 90){$B$}
\put(-160, 90){$\sigma_2^{-1}$}
\put(-177, 90){$A$}
\put(-255, 190){$\sigma_1^{-1}$}
\put(-150, 190){$\sigma_2$}
\put(-85, 188){$V$}
\put(-115, 175){$a>b$}
\put(-142, 210){$A^{-1}$}
\put(-117, 210){$a<b$}
\put(-310, 188){$T$}
\put(-305, 175){$a<b$}
\put(-298, 210){$B^{-1}$}
\put(-273, 210){$a>b$}
\caption{(Lemma \ref{lemma0}) Action of $\sigma_1^{\pm 1}$ and $\sigma_2^{\pm1}$ on Type W}
\label{fig2}
\end{figure}

\begin{lem}\label{lemma1}
Based on Lemma~\ref{lemma0}, we obtain four commutative diagrams. 
\begin{itemize}
\item
The action of $\sigma_1$ is shown in the following commutative diagram. 
For $x\gg 0$, Type M converges to Type W under $\sigma_1$ and the change in weights on the train track is represented by $B^{x-1} T$.

\begin{tikzcd}
M(a, b) \arrow[rd, "T", "a<b"'] \arrow[r, "B^{-1}", "a>b"'] 
  & M(a-b, b) \arrow[rd, "T", "a<2b"'] \arrow[r, "B^{-1}", "a>2b"'] 
    & M(a-2b, b) \arrow[rd, "T", "a<3b"'] \arrow[r, "B^{-1}", "a>3b"'] & \cdots\cdots \\ 
& W(-a+b, b) \arrow[r, "B"] 
  & W(-a+2b, b) \arrow[r, "B"]
    & \cdots\cdots
\end{tikzcd}

\item
The action of $\sigma_1^{-1}$ is shown the following commutative diagram. 
For $x\gg 0$, Type W converges to Type M under $\sigma_1^{-x}$ and the change in weights on the train track is represented by $B^{x-1} T$. 

\begin{tikzcd}
W(a, b) \arrow[rd, "T", "a<b"'] \arrow[r, "B^{-1}", "a>b"'] 
  & W(a-b, b) \arrow[rd, "T", "a<2b"'] \arrow[r, "B^{-1}", "a>2b"'] 
    & W(a-2b, b) \arrow[rd, "T", "a<3b"'] \arrow[r, "B^{-1}", "a>3b"'] & \cdots\cdots \\ 
& M(-a+b, b) \arrow[r, "B"] 
  & M(-a+2b, b) \arrow[r, "B"]
    & \cdots\cdots
\end{tikzcd}

\item
The action of $\sigma_2$ is shown in the following commutative diagram. 
For $x\gg 0$, Type W converges to Type M under $\sigma_2$ and the change in weights on the train track is represented by $A^{x-1} V$. 

\begin{tikzcd}
W(a, b) \arrow[rd, "V", "a>b"'] \arrow[r, "A^{-1}", "a<b"'] 
  & W(a, -a+b) \arrow[rd, "V", "2a>b"'] \arrow[r, "A^{-1}", "2a<b"'] 
    & W(a, -2a+b) \arrow[rd, "V", "3a>b"'] \arrow[r, "A^{-1}", "3a<b"'] & \cdots\cdots \\ 
& M(a, a-b) \arrow[r, "A"] 
  & M(a, 2a-b) \arrow[r, "A"]
    & \cdots\cdots
\end{tikzcd}

\item
The action of $\sigma_2^{-1}$ is shown in the following commutative diagram.
For $x\gg 0$, Type M converges to Type W under $\sigma_2^{-x}$ and the change in weights on the train track is represented by $A^{x-1} V$. 

\begin{tikzcd}
M(a, b) \arrow[rd, "V", "a>b"'] \arrow[r, "A^{-1}", "a<b"'] 
  & M(a, -a+b) \arrow[rd, "V", "2a>b"'] \arrow[r, "A^{-1}", "2a<b"'] 
    & M(a, -2a+b) \arrow[rd, "V", "3a>b"'] \arrow[r, "A^{-1}", "3a<b"'] & \cdots\cdots \\ 
& W(a, a-b) \arrow[r, "A"] 
  & W(a, 2a-b) \arrow[r, "A"]
    & \cdots\cdots
\end{tikzcd}

\end{itemize}
\end{lem}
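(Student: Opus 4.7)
The plan is to build each of the four commutative diagrams as an iterated application of Lemma~\ref{lemma0}, using the two algebraic identities $TB^{-1}=BT$ and $VA^{-1}=AV$ to establish commutativity of the diagonal squares. I will carry out the argument for the first diagram (action of $\sigma_1$ on Type $\M$) in detail; the other three are symmetric.

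Starting from $\M(a,b)$, Lemma~\ref{lemma0} tells us that one application of $\sigma_1$ yields $\M(a-b,b)$ (via $B^{-1}$) whenever $a>b$, and yields $\W(-a+b,b)$ (via $T$) whenever $a<b$. I will show inductively that, as long as the current weights $(a',b)$ of a Type~$\M$ train track satisfy $a'>b$, a single $\sigma_1$ keeps us in Type~$\M$; this produces the top row $\M(a,b)\to \M(a-b,b)\to \M(a-2b,b)\to \cdots$ with horizontal arrows all $B^{-1}$. Similarly, Figure~\ref{fig2} gives that $\sigma_1$ acting on $\W(a'',b)$ produces $\W(a''+b,b)$ (via $B$), producing the bottom row $\W(-a+b,b)\to \W(-a+2b,b)\to \cdots$ with horizontal arrows $B$. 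The diagonal transition $T$ from the $k$th node $\M(a-kb,b)$ of the top row down to the $(k+1)$st node $\W(-a+(k+1)b,b)$ of the bottom row is exactly the description given by Lemma~\ref{lemma0} in the regime $a-kb<b$.

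Next I will verify commutativity of each diagonal square. Going down then right: $\M(a-kb,b)\xrightarrow{T}\W(-a+(k+1)b,b)\xrightarrow{B}\W(-a+(k+2)b,b)$, weights transforming by $BT$. Going right then down: $\M(a-kb,b)\xrightarrow{B^{-1}}\M(a-(k+1)b,b)\xrightarrow{T}\W(-a+(k+2)b,b)$, weights transforming by $TB^{-1}$. The identity $TB^{-1}=BT$ recorded in Lemma~\ref{lemma0} then makes both paths coincide, which is the commutativity claim. Finally, for $x\gg0$ the rational number $a/b$ is eventually exceeded by $k$, so after some $k_0\leq x$ the transition $T$ must occur; iterating $TB^{-1}=BT$ shows that the cumulative weight matrix after $x$ steps equals
\[
B^{x-k_0}\,T\,(B^{-1})^{k_0-1}=B^{x-1}\,T,
\]
independently of where the diagonal transition occurs. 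This gives the claimed transition matrix $B^{x-1}T$ from Type~$\M$ to Type~$\W$.

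The same template, with $A$ and $V$ in place of $B$ and $T$ and with the corresponding weight inequalities $a\lessgtr kb$ replaced by $ka\lessgtr b$, yields the other three diagrams: the $\sigma_2$ diagram uses the identity $VA^{-1}=AV$, while the $\sigma_1^{-1}$ and $\sigma_2^{-1}$ diagrams follow by the same iteration applied to the opposite type. There is no genuine obstacle; the only point requiring care is the bookkeeping of which of the two inequalities ($a\lessgtr kb$) holds at each node, since this dictates whether the horizontal or the diagonal arrow fires. Once this is set up consistently, commutativity and the closed-form matrix $B^{x-1}T$ (resp.\ $A^{x-1}V$) both fall out of the two key relations $TB^{-1}=BT$ and $VA^{-1}=AV$ already verified in Lemma~\ref{lemma0}.
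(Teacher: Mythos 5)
Your proof is correct and follows what is essentially the paper's intended (but unspoken) argument: iterate the single-step dictionary of Lemma~\ref{lemma0} to generate the rows and diagonals of each diagram, use the noted identities $TB^{-1}=BT$ and $VA^{-1}=AV$ to establish that the diagonal squares commute, and push the diagonal transition to the left via $B^{x-k_0}T(B^{-1})^{k_0-1}=B^{x-1}T$ to obtain the closed-form cumulative matrix. This is precisely why the paper records those two identities at the end of Lemma~\ref{lemma0}, and your argument supplies the details the paper leaves implicit.
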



Let $\beta \in B_3 \simeq \MCG(D_3)$ be a pseudo-Anosov 3-braid. 
Suppose that $\beta$ has an invariant train track of Type X, where X = M or W. That is, if we apply $\beta$ to a Type X train track, then we return to the same type of train track. 
A {\em transition matrix} $M$ tells how the weights of the train track edges have changed after applying $\beta$. 
The key idea of the upcoming Lemma~\ref{Matrices} is to compute the type of $\beta$ and the transition matrix by applying Lemmas~\ref{lemma0} and \ref{lemma1}. 

\begin{lemma}\label{Matrices}
Let $\beta = \sigma_1^x \sigma_2^{-1} \sigma_1^{y} \sigma_2^z$ and 
$\beta'=\sigma_1^x \sigma_2^z \sigma_1^y \sigma_2^{-1}$ be pseudo-Anosov 3-braids related by a negative flype.  
For large $x, y$ and $z$, the transition matrices $M$ and $M'$ associated to $\beta$ and $\beta'$ respectively are the following: 
\begin{eqnarray*}
M&=& \sgn(xyz)
\left(  
\begin{matrix} 
      -1-y & \sgn(z) (x+y+xy) \\
      \sgn(z)(1-z-yz) & -1-x+xz+yz+xyz\\
   \end{matrix}
\right)\\
M' &=&  \sgn(xyz)
\left(
\begin{matrix} 
     -1+yz & -x-y+xyz \\
     -1+z+yz & -1-x-y+xz+xyz\\
   \end{matrix}
\right)
\end{eqnarray*}
Furthermore, we have $\det(M)=\det(M')=1$ and 
\begin{equation*}\label{eq:trace}
\tr(M)=\tr(M')=\gamma(x,y,z)=\sgn(xyz)(-2 -x-y+xz+yz +xyz).
\end{equation*}
\end{lemma}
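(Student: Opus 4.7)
The plan is to compute $M$ and $M'$ by tracking how the measured weights of an invariant train track of Type $\M$ or $\W$ transform under each syllable of the braid word, using the action tables in Lemma~\ref{lemma0} and the asymptotic formulas for large powers in Lemma~\ref{lemma1}. Each computation splits into two stages: first identify which type ($\M$ or $\W$) returns to itself under the braid so that the invariant train track is well-defined, and then multiply the individual transition matrices in the order dictated by acting on column vectors of weights (last-applied factor on the left).

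First I would work out the archetypal case $x, y, z > 0$ large in detail. For $\beta = \sigma_1^x \sigma_2^{-1} \sigma_1^y \sigma_2^z$, starting from Type $\M$, Lemma~\ref{lemma1} yields the syllable-by-syllable sequence $\M \xrightarrow{B^{x-1}T} \W \xrightarrow{A} \W \xrightarrow{B^y} \W \xrightarrow{A^{z-1}V} \M$, and so $M = (A^{z-1}V)\,B^y\,A\,(B^{x-1}T)$. For $\beta' = \sigma_1^x \sigma_2^z \sigma_1^y \sigma_2^{-1}$, the invariant type is $\W$ instead, with sequence $\W \xrightarrow{B^x} \W \xrightarrow{A^{z-1}V} \M \xrightarrow{B^{y-1}T} \W \xrightarrow{A} \W$, producing $M' = A\,(B^{y-1}T)\,(A^{z-1}V)\,B^x$. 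Direct expansion of these $2\times 2$ products reproduces the formulas in the statement with $\sgn(xyz) = \sgn(z) = +1$.

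The remaining seven sign patterns I would handle by the same four-step procedure: reversing the sign of an individual exponent swaps the direction of the corresponding $\M \leftrightarrow \W$ conversion in Lemma~\ref{lemma1} (and sometimes swaps which type is invariant), while the combined effect on the matrix product is precisely what the prefactors $\sgn(xyz)$ and $\sgn(z)$ encode. I expect the main obstacle to be this eight-way sign bookkeeping --- each individual case is mechanical, but organizing them cleanly (perhaps as a table indexed by the signs of $x,y,z$) is the only non-routine piece of the argument. Once both matrices are in hand, the determinant and trace claims are routine: all of $A, B, T, V$ have determinant $\pm 1$ and the overall signs balance so that $\det M = \det M' = 1$, while summing the diagonals of the displayed formulas collapses to $\tr M = \tr M' = \sgn(xyz)(-2 - x - y + xz + yz + xyz) = \gamma(x,y,z)$. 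The equality $\tr M = \tr M'$ --- despite $\beta$ and $\beta'$ being typically non-conjugate by Theorem~\ref{theorem:Ko-Lee} --- is the punchline that drives Theorem~\ref{thm:dilatation}.
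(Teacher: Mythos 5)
Your proposal is correct and matches the paper's method: identify the invariant train-track type (M or W) for $\beta$ and $\beta'$, decompose each syllable $\sigma_i^{\pm k}$ into the matrix action given by Lemmas~\ref{lemma0}--\ref{lemma1}, multiply, and repeat across the eight sign patterns of $(x,y,z)$; your Case-8 products $M = A^{z-1}V\,B^y\,A\,B^{x-1}T$ and $M' = A\,B^{y-1}T\,A^{z-1}V\,B^x$ agree exactly with the paper's $M_8$ and $M_8'$. One small caveat on the remaining cases: flipping an exponent's sign does more than reverse the $\M\leftrightarrow\W$ arrow --- it can change the invariant type of the whole word (e.g.\ $\beta$ is Type M in Cases 2, 4, 6, 8 but Type W in Cases 1, 3, 5, 7) and change the single-syllable factor (e.g.\ $B^{x-1}T$ versus $B^x$), so the eight cases really do need to be worked out individually rather than deduced by a uniform sign substitution; the paper also conjugates each resulting product (by $A$ or by $C = \begin{pmatrix}2&1\\1&1\end{pmatrix}$) to put it in Perron--Frobenius form, which certifies that for $x,y,z \gg 1$ the claimed type M/W track is actually carried --- a verification your outline leaves implicit.
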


In the proof of Lemma~\ref{Matrices}
we separate all 3-braids into eight cases, depending on the sign of $x,y$, and $z$: 
$$
   \begin{array}{c|ccc} 
& x & y & z \\
\hline   
\mbox{ Case 1} & - & - & - \\
\hline
\mbox{ Case 2} & - & - & + \\
\hline
\mbox{ Case 3} & - & + & - \\
\hline
\mbox{ Case 4} & - & + & + \\
\hline
\mbox{ Case 5} & + & - & - \\
\hline
\mbox{ Case 6} & + & - & + \\
\hline
\mbox{ Case 7} & + & + & - \\
\hline
\mbox{ Case 8} & + & + & + \\
\hline
\end{array}
$$

\begin{remark}\label{rem:Case3and5}
There is a bijection between the set of Case 3 braids and the set of Case 5 braids. 
Suppose $\x, \y , \z>0$. 
Then 
$$
\textrm{(Case 3) } \s_1^{-\x} \s_2^{-1} \s_1^\y \s_2^{-\z} 
\stackrel{\rm conj}{\sim}
\s_2^{-\z}\s_1^{-\x} \s_2^{-1} \s_1^\y 
\stackrel{\rm rev}{\sim}
\s_1^\y \s_2^{-1}\s_1^{-\x}\s_2^{-\z} 
\textrm{ (Case 5)} $$
where $\stackrel{\rm conj}{\sim}$ means conjugation and $\stackrel{\rm rev}{\sim}$ means reverse orientation or read the word backward. 
Thus swapping $\x$ and $\y$ gives a bijection between the two sets. 

Similarly, we can find a bijection between the sets for Case 4 and Case 6 braids by swapping $\x$ and $\y$. 
\end{remark}

\begin{proof}[Proof of Lemma~\ref{Matrices}]
We calculate the matrices $M_i$ and $M_i'$ associated to $\beta$ and $\beta'$ respectively for each Case $i$. 
Below we manipulate the braid word of $\beta$ and $\beta'$ in each case to force $x,y,$ and $z$ to be positive in our calculations. \\

\noindent{\bf Case 1:} ($\beta = \sigma_1^{-x} \sigma_2^{-1} \sigma_1^{-y} \sigma_2^{-z}$ and $\beta'=\sigma_1^{-x} \sigma_2^{-z} \sigma_1^{-y} \sigma_2^{-1}$) 
We use Lemmas~\ref{lemma0} and \ref{lemma1} in the calculations for $\beta$ and $\beta'$. 
\[
\beta: \text{W} \xrightarrow[\sigma_1^{-x}]{B^{x-1}T} \text{M} \xrightarrow[\sigma_2^{-1}]{V} \text{W} 
\xrightarrow[\sigma_1^{-y}]{B^{y-1}T} \text{M} 
\xrightarrow[\sigma_2^{-z}]{A^{z-1}V} \text{W} \qquad \mbox{ (Type W)}
\]
\[
\beta': \text{W} \xrightarrow[\sigma_1^{-x}]{B^{x-1}T} \text{M} \xrightarrow[\sigma_2^{-z}]{A^{z-1}V} \text{W} 
\xrightarrow[\sigma_1^{-y}]{B^{y-1}T} \text{M} 
\xrightarrow[\sigma_2^{-1}]{V} \text{W} \qquad \mbox{ (Type W)}
\]
Both $\beta$ and $\beta'$ have Type W invariant measured train tracks. 
Transition matrices are:
\begin{eqnarray}
M_1 &=& A^{z-1} V B^{y-1} T V B^{x-1}T
\nonumber\\
&=& \begin{pmatrix} 1-y & -x-y+xy \\ 1+z-yz & 1-x-xz-yz+xyz \end{pmatrix} \label{matrixM1} \\ 
M_1' &=& V B^{y-1}T A^{z-1}V B^{x-1}T 
\nonumber \\
&=&\begin{pmatrix} 1-yz & -x-y+xyz \\ 1+z-yz & 1-x-y-xz+xyz\end{pmatrix} 
\label{matrixM1'}
\end{eqnarray}

Below we turn $M_1$ and $M_1'$ into Perron-Frobenius by taking conjugates of the original matrices. 
When $x \geq 3, y\geq 3$, and $z\geq 2$, we can verify that $ A^{-1} M_1 A$ is a non-negative integral (i.e., Perron Frobenius) matrix. 
Here are the computations. 
The $(1,1)$ element of the matrix $ A^{-1} M_1 A$ is 
$$xy-x-2y+1 = (x-2)(y-1)-1 \geq 0.$$  
The $(1,2)$ element is $$(x-1)(y-1)-1\geq 0.$$
The $(2,1)$ element is $$(x-2)(z-1)\left((y-1)-\frac{1}{x-2}-\frac{1}{z-1}\right)\geq 0.$$
The $(2,2)$ element is $$(x-1)(z-1)\left((y-1)-\frac{1}{x-1}-\frac{1}{z-1}\right)\geq 0.$$

If $x\geq 3, y\geq 3$, and $z\geq 2$, the matrix $C^{-1} M_1' C $ where 
$C:=\left( 
\begin{matrix}
      2 & 1 \\
      1 & 1 \\
\end{matrix}\right)$
is also Perron-Frobenius. Indeed, the $(1,1)$ element is $$z(x-2)-1\geq 0.$$ 
The $(1,2)$ element is $$z(x-1)-1\geq 0.$$
The $(2,1)$ element is $$(x-2)(y-2)\left( z- \frac{1}{y-2} - \frac{1}{x-2}\right) \geq 0.$$
The $(2,2)$ element is $$(x-1)(y-2)\left( z- \frac{1}{y-2} - \frac{1}{x-1}\right)\geq 0.$$


\noindent{\bf Case 2:} ($\beta = \sigma_1^{-x} \sigma_2^{-1} \sigma_1^{-y} \sigma_2^{z}$ and $\beta'=\sigma_1^{-x} \sigma_2^{z} \sigma_1^{-y} \sigma_2^{-1}$) 
We have: 
\[
\beta: \text{M} \xrightarrow[\sigma_1^{-x}]{B^{x}} \text{M} \xrightarrow[\sigma_2^{-1}]{V} \text{W} \xrightarrow[\sigma_1^{-y}]{B^{y-1}T} \text{M} \xrightarrow[\sigma_2^{z}]{A^{z}} \text{M} 
\qquad \mbox{ (Type M)}
\]
\[
\beta': \text{W} \xrightarrow[\sigma_1^{-x}]{B^{x-1}T} \text{M} \xrightarrow[\sigma_2^{z}]{A^z} \text{M}  \xrightarrow[\sigma_1^{-y}]{B^y} \text{M} \xrightarrow[\sigma_2^{-1}]{V} \text{W} 
\qquad \mbox{ (Type W)}
\]
\begin{eqnarray*}
M_2 &=& A^{z} B^{y-2} T B^{-1} V B^{x}\\
&=& \begin{pmatrix} -1+y & -x-y+xy \\ 1-z+yz & -1+x-xz-yz+xyz \end{pmatrix} \\ 
M_2' &=& VB^{y}A^{z} B^{x-1}T \\
&=&\begin{pmatrix} -1-yz & x+y+xyz \\ -1+z-yz & -1+x+y-xz+xyz\end{pmatrix} 
\end{eqnarray*}
For $x, y, z \gg 1$, both $M_2$ and $C^{-1}M_2' C$ are Perron Frobenius.\\


\noindent{\bf Case 3:} ($\beta = \sigma_1^{-x} \sigma_2^{-1} \sigma_1^{y} \sigma_2^{-z}$ and $\beta'=\sigma_1^{-x} \sigma_2^{-z} \sigma_1^{y} \sigma_2^{-1}$) 
We have: 
\[
\beta: \text{W} \xrightarrow[\sigma_1^{-x}]{B^{x-1}T} \text{M} \xrightarrow[\sigma_2^{-1}]{V} \text{W} \xrightarrow[\sigma_1^{y}]{B^y} \text{W} \xrightarrow[\sigma_2^{-z}]{A^z} \text{W}
\qquad \mbox{ (Type W)}
\]
\[
\beta': \text{W} \xrightarrow[\sigma_1^{-x}]{B^{x-1}T} \text{M} \xrightarrow[\sigma_2^{-z}]{A^{z-1} V} \text{W}  
\xrightarrow[\sigma_1^{y}]{B^y} \text{W} \xrightarrow[\sigma_2^{-1}]{A} \text{W} 
\qquad \mbox{ (Type W)}
\]
\begin{eqnarray*}
M_3 &=& A^{z} B^{y} V B^{x-1} T\\
&=& \begin{pmatrix} -1-y & x-y+xy \\ -1-z-yz & -1+x+xz-yz+xyz \end{pmatrix} \\ 
M_3' &=& AB^{y}A^{z-1}V B^{x-1}T \\
&=&\begin{pmatrix} -1-yz & x-y+xyz \\ -1-z-yz & -1+x-y+xz+xyz\end{pmatrix} 
\end{eqnarray*}
For $x, y, z \gg 1$, both $A^{-1}M_3 A$ and $A^{-1}M_3' A$ are Perron Frobenius.\\


\noindent{\bf Case 4:} ($\beta = \sigma_1^{-x} \sigma_2^{-1} \sigma_1^{y} \sigma_2^{z}$ and $\beta'=\sigma_1^{-x} \sigma_2^{z} \sigma_1^{y} \sigma_2^{-1}$) 
We have: 
\[
\beta: \text{M} \xrightarrow[\sigma_1^{-x}]{B^{x}} \text{M} \xrightarrow[\sigma_2^{-1}]{V} \text{W} \xrightarrow[\sigma_1^y]{B^y} \text{W} \xrightarrow[\sigma_2]{V} \text{M} \xrightarrow[\sigma_2^{z-1}]{A^{z-1}} \text{M}
\qquad \mbox{ (Type M)}
\]

\[
\beta': \text{W} \xrightarrow[\sigma_1^{-x}]{B^{x-1}T} \text{M} \xrightarrow[\sigma_2^{z}]{A^z} \text{M} \xrightarrow[\sigma_1]{T} \text{W} \xrightarrow[\sigma_1^{y-1}]{B^{y-1}} \text{W} \xrightarrow[\sigma_2^{-1}]{A} \text{W}
\qquad \mbox{ (Type W)}
\]

\begin{eqnarray*}
M_4 &=& A^{z-1}V B^{y} V B^{x}\\
&=& \begin{pmatrix} 1+y & x-y+xy \\ -1+z+yz & 1-x+xz-yz+xyz \end{pmatrix} \\ 
M_4' &=& AB^{y-1}TA^{z}B^{x-1}T \\
&=&\begin{pmatrix} 1-yz & -x+y+xyz \\ 1-z-yz & 1-x+y+xz+xyz\end{pmatrix} 
\end{eqnarray*}
For $x, y, z \gg 1$, both $M_4$ and $A^{-1}M_4' A$ are Perron Frobenius.\\


\noindent{\bf Case 5:} ($\beta = \sigma_1^{x} \sigma_2^{-1} \sigma_1^{-y} \sigma_2^{-z}$ and $\beta'=\sigma_1^{x} \sigma_2^{-z} \sigma_1^{-y} \sigma_2^{-1}$) We have: 
\[
\beta: \text{W} \xrightarrow[\sigma_1^{x}]{B^{x}} \text{W} \xrightarrow[\sigma_2^{-1}]{A} \text{W} \xrightarrow[\sigma_1^{-1}]{T} \text{M} \xrightarrow[\sigma_1^{-y+1}]{B^{y-1}} \text{M} \xrightarrow[\sigma_2^{-1}]{V} \text{W} \xrightarrow[\sigma_2^{-z+1}]{A^{z-1}} \text{W}
\qquad \mbox{ (Type W)}
\]
\[
\beta': \text{W} \xrightarrow[\sigma_1^{x}]{B^{x}} \text{W} \xrightarrow[\sigma_2^{-z}]{A^z} \text{W} \xrightarrow[\sigma_1^{-1}]{T} \text{M} \xrightarrow[\sigma_1^{-y+1}]{B^{y-1}} \text{M} \xrightarrow[\sigma_2^{-1}]{V} \text{W}
\qquad \mbox{ (Type W)}
\]
\begin{eqnarray*}
M_5 &=& A^{z-1}V B^{y-1} T A B^{x}\\
&=& \begin{pmatrix} -1+y & -x+y+xy \\ -1-z+yz & -1-x-xz+yz+xyz \end{pmatrix} \\ 
M_5' &=& VB^{y-1}TA^{z}B^{x} \\
&=&\begin{pmatrix} -1+yz & -x+y+xyz \\ -1-z+yz & -1-x+y-xz+xyz\end{pmatrix}  
\end{eqnarray*}
For $x, y, z \gg 1$, both $M_5$ and $M_5'$ are Perron Frobenius.\\


\noindent{\bf Case 6:} ($\beta = \sigma_1^{x} \sigma_2^{-1} \sigma_1^{-y} \sigma_2^{z}$ and $\beta' = \sigma_1^x \sigma_2^{z} \sigma_1^{-y} \sigma_2^{-1}$) We have: 
\[
\beta: \text{M} \xrightarrow[\sigma_1^{x}]{B^{x-1}T} \text{W} \xrightarrow[\sigma_2^{-1}]{A} \text{W} \xrightarrow[\sigma_1^{-y}]{B^{y-1}T} \text{M} \xrightarrow[\sigma_2^{z}]{A^{z}} \text{M} 
\qquad \mbox{ (Type M)}
\]
\[
\beta': \text{W} \xrightarrow[\sigma_1^{x}]{B^x} \text{W} \xrightarrow[\sigma_2]{V} \text{M} \xrightarrow[\sigma_2^{z-1}]{A^{z-1}} \text{M}  \xrightarrow[\sigma_1^{-y}]{B^y} \text{M}  \xrightarrow[\sigma_2^{-1}]{V} \text{W}
\qquad \mbox{ (Type W)}
\]
\begin{eqnarray*}
M_6 &=& A^z B^{y-1}T A B^{x-1}T \\
&=& \begin{pmatrix} 1-y & -x+y+xy \\ -1+z-yz & 1+x-xz+yz+xyz \end{pmatrix} \\ 
M_6' &=& VB^y A^{z-1}V B^x\\
&=&\begin{pmatrix} 1+ yz & x-y+xyz \\ 1-z+yz & 1+x-y-xz+xyz\end{pmatrix} 
\end{eqnarray*}
For $x, y, z \gg 1$, both $A^{-1}M_6A$ and $M_6'$ are Perron Frobenius.\\

\noindent{\bf Case 7:} ($\beta = \sigma_1^{x} \sigma_2^{-1} \sigma_1^{y} \sigma_2^{-z}$ and $\beta' = \sigma_1^x \sigma_2^{-z} \sigma_1^{y} \sigma_2^{-1}$) 
We have: 
\[
\beta: \text{W} \xrightarrow[\sigma_1^{x}]{B^x} \text{W} \xrightarrow[\sigma_2^{-1}]{A} \text{W} \xrightarrow[\sigma_1^{y}]{B^y} \text{W} \xrightarrow[\sigma_2^{-z}]{A^z} \text{W} 
\qquad \mbox{ (Type W)}
\]
\[
\beta': \text{W} \xrightarrow[\sigma_1^{x}]{B^x} \text{W} \xrightarrow[\sigma_2^{-z}]{A^z} \text{W} \xrightarrow[\sigma_1^{y}]{B^y} \text{W}  \xrightarrow[\sigma_2^{-1}]{A} \text{W}
\qquad \mbox{ (Type W)}
\]
\begin{eqnarray*}
M_7 &=& A^z B^y A B^x= \begin{pmatrix} 1+ y & x+y+xy \\ 1+z+yz & 1+x+xz+yz+xyz \end{pmatrix} \\ 
M_7' &=& AB^y A^z B^x=\begin{pmatrix} 1+ yz & x+y+xyz \\ 1+z+yz & 1+x+y+xz+xyz\end{pmatrix} 
\end{eqnarray*}
Both $M_7$ and $M_7'$ are Perron Frobenius.


\noindent{\bf Case 8:} ($\beta = \sigma_1^{x} \sigma_2^{-1} \sigma_1^{y} \sigma_2^{z}$ and $\beta' = \sigma_1^x \sigma_2^{z} \sigma_1^{y} \sigma_2^{-1}$) We have:
\[
\beta: \text{M} \xrightarrow[\sigma_1^{x}]{B^{x-1}T} \text{W} \xrightarrow[\sigma_2^{-1}]{A} \text{W} \xrightarrow[\sigma_1^{y}]{B^y} \text{W} \xrightarrow[\sigma_2]{V} \text{M} \xrightarrow[\sigma_2^{z-1}]{A^{z-1}} \text{M} 
\qquad \mbox{ (Type M)}
\]
\[
\beta': \text{W} \xrightarrow[\sigma_1^{x}]{B^x} \text{W} \xrightarrow[\sigma_2]{V} \text{M} \xrightarrow[\sigma_2^{z-1}]{A^{z-1}} \text{M}  \xrightarrow[\sigma_1]{T} \text{W}  \xrightarrow[\sigma_1^{y-1}]{B^{y-1}} \text{W} \xrightarrow[\sigma_2^{-1}]{A} \text{W}
\qquad \mbox{ (Type W)}
\]
\begin{eqnarray*}
M_8 &=& A^{z-1} V B^{y} A B^{x-1}T \\
&=& \begin{pmatrix} -1-y & x+y+xy \\ 1-z-yz & -1-x+xz+yz+xyz \end{pmatrix} \\ 
M_8' &=& A B^{y-1} T A^{z-1}V B^x\\
&=&\begin{pmatrix} -1+ yz & -x-y+xyz \\ -1+z+yz & -1-x-y+xz+xyz\end{pmatrix}  
\end{eqnarray*}
For $x, y, z \gg 1$, both $A^{-1}M_8A$ and $M_8'$ are Perron Frobenius.
\end{proof}

\begin{remark}\label{rem_on_Case1}

In Proposition~\ref{Matrices}, we assume that $x, y, z \gg 1$. 
More precisely in Case 1, for example, we require $(x-1)-\frac{1}{(y-1)-\frac{1}{z}} > \alpha$. 

Let $a $ and $b$ be positive integers. 
After scaling, we can identify W$(a, b)$ with W$(\alpha, 1)$ for some $\alpha=\frac{a}{b}$. 
We start with W$(\alpha, 1)$ and apply $\sigma_1^{-x}$. 
If $x>\alpha$, we obtain 
$$W(\alpha, 1) \stackrel{\sigma_1^{-x}}{\longrightarrow} M(x-\alpha, 1).$$ 
If $x-1>\alpha$, we then obtain 
$$M(x-\alpha, 1)\stackrel{\sigma_2^{-1}}{\longrightarrow} W(x-\alpha, x-1-\alpha).$$
If $(x-1) - \frac{1}{y-1}>\alpha$, we obtain 
$$W(x-\alpha, x-1-\alpha)\stackrel{\sigma_1^{-y}}{\longrightarrow}
M( (y-1)(x-1)-1-\alpha(y-1), (x-1)-\alpha).$$
If $(x-1)-\frac{1}{(y-1)-\frac{1}{z}}>\alpha$, we further obtain 
\begin{eqnarray}
&&M( (y-1)(x-1)-1-\alpha(y-1), (x-1)-\alpha) 
\nonumber 
\\
& \stackrel{\sigma_2^{-z}}{\longrightarrow} &
W((y-1)(x-1)-1-\alpha(y-1), (x-1) (zy-z-1)-z - \alpha(zy-z-1)) \nonumber 
\\
&=& 
W((1-y)\alpha -x-y+xy, (1+z-yz)\alpha + (1-x-xz-yz+xyz)) \label{expression}
\end{eqnarray}
The expression in (\ref{expression}) matches with the matrix $M_1$ in (\ref{matrixM1}). 
Therefore, as long as 
\begin{equation}\label{validxyz}
(x-1)-\frac{1}{(y-1)-\frac{1}{z}} > \alpha
\end{equation}
holds, the transition matrix $M_1$ is valid. 
\end{remark}

\bibliographystyle{alpha}

\bibliography{references}

\end{document}